\documentclass[11pt]{article}
\usepackage{graphicx} 
\usepackage{a4wide}

\usepackage{amsmath}

\usepackage{color}
\usepackage{amsmath,amssymb,enumerate}
\usepackage{pstricks}

\usepackage{epsfig}

\usepackage{amsmath}   
\usepackage{amssymb} 

\usepackage{hyperref}
\usepackage{dsfont}
\usepackage[all]{xy}

\usepackage{pgfplots}
\pgfplotsset{compat=1.15}
\usepackage{mathrsfs}
\usetikzlibrary{arrows}
\usetikzlibrary{patterns}
\usepackage{xcolor}
\usepackage{tikz}
\usetikzlibrary{calc,trees,positioning,fit,shapes,calc}

\usepackage{graphics}

\usepackage{pst-3d,float}

\newtheorem{theorem}{Theorem}[section]

\newtheorem{corollary}[theorem]{Corollary}
\newtheorem{definition}[theorem]{Definition}
\newtheorem{lemma}[theorem]{Lemma}
\newtheorem{proposition}[theorem]{Proposition}
\newtheorem{remark}[theorem]{Remark}
\newenvironment{proof}{\medskip\noindent{\bf Proof.}\;}{\null\hfill $\Box$\par\medskip }

\newcommand{\supp}{{\rm supp}}
\newcommand{\diam}{{\rm diam}}
\newcommand{\C}{\ensuremath{\mathcal C}}
\newcommand{\I}{\ensuremath{\mathcal I}}
\newcommand{\K}{\ensuremath{\mathcal K}}
\newcommand{\Ke}{\ensuremath{\mathcal K}_{\varepsilon}}
\newcommand{\Ken}{\ensuremath{\mathcal K}_{\varepsilon_n}}
\newcommand{\Zen}{\ensuremath{\mathcal Z}_{\varepsilon_n}}

\newcommand{\eps}{\ensuremath{\varepsilon}}
\newcommand{\epsn}{\ensuremath{\varepsilon_n}}

\title{Constructing self-similar subsets within the fractal support of Lacunary Wavelet Series for their multifractal analysis}

\author{C\'eline Esser$^a$  and B\'eatrice Vedel$^b$\\\\
$^a$ Universit\'e de Li\`ege,  All\'ee de la D\'ecouverte 12, B-4000
Li\`ege, Belgium \\
$^b$Universit\'e Bretagne Sud, CNRS UMR 6205, LMBA, F-56000 Vannes, France}         

\date{ }

\def \Z {\mathbb{Z}}
\def \N {\mathbb{N}}
\def \R {\mathbb{R}}
\def \C {\mathcal{C}}

\begin{document}

\maketitle
\begin{abstract}
Given a fractal $\I$ whose Hausdorff dimension matches with the upper-box dimension, we propose a new method  which consists in selecting inside  $\I$  some subsets (called quasi-Cantor sets) of almost same dimension and with controled properties of self-similarties at prescribed scales. It allows us to estimate below the Hausdorff dimension $\I$ intersected to limsup sets of contracted balls selected according a Bernoulli law,  in contexts where classical Mass Transference Principles cannot be applied. We apply this result to the computation of the increasing multifractal spectrum of lacunary wavelet series supported on $\I$.
\end{abstract}

\noindent  {\bf Keywords :} Multifractal Analysis, Lacunary wavelet series, fractal sets, fractal dimensions\\

\noindent {\bf 2010 Mathematics Subject Classification : } 42C40, 28A78, 28A80, 26A16, 60G17\\

\section{Introduction and statement of the main results : alternative}

Multifractal analysis has become a pivotal tool for analyzing irregular signals. This field has been extensively studied across a wide range of functions. It provides a useful framework for examining the structures of sets and functions, giving interesting perspectives on many mathematical areas like stochastic analysis, number theory, ergodic theory and functional analysis for example. One of the key challenges in multifractal analysis is determining the Hausdorff dimension of subsets of $\mathbb{R}$ (or $\mathbb{R}^d$), known as iso-H\"older sets. These sets correspond to the level sets of the local regularity exponents of functions. Often, these subsets are described as limsup sets of balls, a connection that naturally arises from the characterization of regularity based on wavelet coefficients, which involves a liminf condition (see Theorem \ref{thm:waveletcharact} and Proposition \ref{prop:Edelta} below).

Estimating the dimensions of such limsup sets is crucial for understanding the geometric properties of fractals. Getting an upper bound for the Hausdorff dimension of these sets is often much easier to establish than the corresponding lower bounds. 
A common approach to obtain lower bounds relies on the Mass Transference Principle (MTP), a powerful result that establishes a link between measure-theoretic and geometric properties. 
\begin{theorem}[Mass Transference Principle]\cite{Beresn:06}\label{MTP}
Let $X$ be a compact set in $\R^d$ and assume that there exist $s\in [0, d]$ and $a,b,r_0>0$ such that
\begin{equation}\label{eqGMT}
a r^s \leq \mathcal{H}^{s}(B \cap X) \leq b r^s
\end{equation}
for any ball $B$ of center $x \in X$ and of radius $r \leq r_0$. Let $\delta >0$. Given a ball $B = B(x,r)$ with center in $X$, we set
$
B^{\delta} = B \left(x, r^{\delta} \right).
$
Assume that $(B_n)_{n \in \N}$ is a sequence of balls with center in $X$ and radius $r_n$  such that the sequence $(r_n)_{n \in \N}$ converges to 0. If 
\[
\mathcal{H}^s \left(  X \cap \limsup_{n \rightarrow + \infty} B_n \right) = \mathcal{H}^s( X),
\]
then
\[
\mathcal{H}^{\frac{s}{\delta}} \left(X \cap \limsup_{n \rightarrow + \infty} B_n^\delta \right) = \mathcal{H}^{\frac{s}{\delta}}(X).
\]
\end{theorem}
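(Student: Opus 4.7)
The plan is to prove the statement via the mass distribution principle: it suffices to construct, in any ball $B_0$ centered in $X$, a probability measure $\mu$ supported on $X \cap B_0 \cap \limsup_{n} B_n^\delta$ that satisfies a Frostman-type estimate $\mu(B(x,r)) \lesssim r^{s/\delta}$ for all sufficiently small $r > 0$. The mass distribution principle will then yield $\mathcal{H}^{s/\delta}(X \cap B_0 \cap \limsup_n B_n^\delta) > 0$, and combining this with the local Ahlfors-type regularity \eqref{eqGMT} of $\mathcal{H}^s$ on $X$ (which transfers to the $s/\delta$ regime through the same covering estimates) one concludes that the limsup set carries full $\mathcal{H}^{s/\delta}$-measure inside every such $B_0$, hence globally.

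The first main step is to exploit the hypothesis $\mathcal{H}^s(X \cap \limsup B_n) = \mathcal{H}^s(X)$ to select, inside $B_0$ and at each stage $k$, a finite disjoint sub-family $\{B_{n_i}\}_{i \in I_k}$ of the original balls satisfying three conditions: (i) every radius $r_{n_i}$ is smaller than a prescribed threshold $\eta_k$, where the sequence $\eta_k$ decreases very rapidly with $\eta_{k+1} \ll \eta_k^{\delta}$ (this lacunary spacing is crucial for the later Frostman bound); (ii) the contracted balls $B_{n_i}^\delta$ are pairwise disjoint and lie inside the previous-generation parent ball; (iii) the total $\mathcal{H}^s$-mass of $X \cap \bigcup_i B_{n_i}$ captures a definite fraction of the parent ball's $\mathcal{H}^s$-mass. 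Step (iii) follows from the full-measure hypothesis and a standard $5r$-covering argument.

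Inside each selected $B_{n_i}$ I would then use \eqref{eqGMT} to pack many disjoint sub-balls of radius $r_{n_i}^\delta$ centered on points of $X$; the $s$-Ahlfors regularity permits of order $r_{n_i}^{-s(\delta-1)}$ of them to fit inside $B_{n_i}$, each automatically belonging to $\limsup B_n^\delta$ because its parent is itself one of the retained balls. Iterating this construction yields a nested Cantor-like family, and distributing mass uniformly across children at each stage defines a probability measure $\mu$ whose support lies in $X \cap \limsup B_n^\delta$. The mass of a generation-$k$ ball should be set so that summing over all $r_{n_i}^{s/\delta}$ at stage $k$ gives a constant independent of $k$, which is exactly what the packing estimate allows.

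The hard part, and the technical heart of the argument, is verifying the Frostman bound $\mu(B(x,r)) \lesssim r^{s/\delta}$ uniformly. One must distinguish several regimes depending on where $r$ sits relative to the generation scales: when $r$ is comparable to a child radius $r_{n_i}^\delta$ the bound is essentially by construction; when $r$ is intermediate between a child scale and the next parent scale, one must use \eqref{eqGMT} to bound the number of children that a single ball of radius $r$ can meet; and when $r$ is much smaller than all child scales one invokes $s$-regularity one last time. The rapid lacunary choice $\eta_{k+1} \ll \eta_k^\delta$ is precisely what makes these regimes fit together without overlap, and the exponent $s/\delta$ emerges from the balance between the number of children (of order $r_{n_i}^{-s(\delta-1)}$), the mass assigned to each child, and the Ahlfors counting in the intermediate regime. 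Getting this intermediate-scale counting right is the step I would expect to absorb most of the work.
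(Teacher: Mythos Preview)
The paper does not supply its own proof of this theorem: it is quoted from \cite{Beresn:06} as background, and the paper only remarks that ``the key argument\ldots is to construct a generalized Cantor set included in the limsup set of the contracted balls and, simultaneously, a probability measure supported by this Cantor set with prescribed scaling properties.'' Your outline is exactly this strategy --- the Cantor-type nested construction, the $5r$-covering to extract disjoint subfamilies carrying a definite fraction of mass, the packing of $\sim r_{n_i}^{-s(\delta-1)}$ sub-balls of radius $r_{n_i}^\delta$ inside each selected $B_{n_i}$ via Ahlfors regularity, and the multi-regime Frostman verification --- and it matches the original Beresnevich--Velani argument. So there is nothing in the paper to compare against beyond that one-sentence summary, and your proposal is consistent with it.
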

Here, $\mathcal{H}^s$ denotes the Hausdorff measure of dimension $s$. 
Historically, the first result of this type was obtained by Jaffard in \cite{Jaffard:00b} in the context of multifractal analysis of lacunary wavelet series (LWS) on $[0,1]$, and states that if 
$
\mathcal{L}(\limsup_{n\to + \infty} B_n) = 1,$
then
$$
\dim_{\mathcal{H}}(\limsup_{n\to + \infty} B_n^\delta) \geq \frac{1}{\delta},
$$
where $\dim_{\mathcal{H}}$ is the  Hausdorff dimension, see Definition \ref{def:hausdorff}. 
The key argument of both proofs is to construct a generalized Cantor set included in the limsup set of the contracted balls and, simultaneously, a probability measure supported by this Cantor set with prescribed scaling properties. The MTP and its extensions has proven effective in a variety of settings, making it a key tool for studying the dimensions of fractal sets and their associated structures, see e.g. \cite{Beresn:06,BaSe07,Koivusalo22,Daviaud:24}. 

Regarding the multifractal analysis of functions, despite the significant advances in the understanding of the multifractal analysis, both in theory and practice, important challenges remain. Let us mention, among others, the refinement of large deviation methods, the multivariate analysis of functions, and the multifractal analysis of signals with lacunar supports.  The analysis of functions or stochastic processes defined on fractal sets is a growing area of research, particularly in fields where data is naturally supported on fragmented or irregular structures. This is especially relevant in geographical studies, where multifractal analysis of parcel-based data is essential (\cite{LenRoux:22a, LenRoux:22b}). Recent advances have focused on defining and studying random fields on fractal supports, such as Sierpinski carpets \cite{BaudoinLacaux:20}, which serve as prototypical examples of fractal domains. These works extend classical models of random fields to accommodate the intricate geometry and self-similarity of fractals. The multivariate analysis of functions makes also naturally emerge fractal supports since it aims to describe the intersection of iso-H\"older sets (\cite{JaffSeu:19,Seuret:24}). 

Building on this context, in the recent work done in \cite{Jaffard:00b},  we have investigated the overestimation of the Hausdorff dimension of iso-Hölder sets by classical large deviation methods \cite{EV23}. Focusing on functions represented in a wavelet basis, we have proposed a method based on ``lacunarized expansions'' of the function. Specifically, we have shown that for so-called ``$\alpha$-sparse wavelet series'', large deviation methods for the lacunarized wavelet series provides a criterion for detecting such overestimations. We refer to Section 2 for the definition of wavelet bases and $\alpha$-sparse wavelet  series. This approach offers a computationally feasible way to refine the multifractal analysis of signals and images. 

Based on this work, the present paper further explores the multifractal properties of wavelet series defined over fractal subsets, where the upper box-counting dimension matches the Hausdorff dimension.

Let us be more precise in the description of the model under study within this paper. Let $\I \subset [0,1]$ be a compact set satisfying 
\begin{equation}\label{eq:egaldim}
    \dim_{\mathcal{H}} (\I) = \dim_{u-box} (\I)>0,
\end{equation}
let $\alpha>0$ and let $\eta \in (0, \dim_{\mathcal{H}} (\I))$. Given a wavelet $\psi$, the \emph{lacunary wavelet series $f$ on $\I$ of parameters $(\alpha, \eta)$} is the random series defined by 
$$ 
f = \sum_{j \geq 0} \sum_{k \in \I_j} 2^{-\alpha j} \xi_{j,k}\psi(2^j \cdot -k) 
$$ 
where 
\begin{equation}\label{eq:Ij} \I_j= \{ k \in \{0, \dots, 2^j-1\} : [k2^{-j}, (k+1)2^{-j}) \cap \, \I \neq \emptyset\} 
\end{equation} 
and where $(\xi_{j,k})_{j,k}$ is a sequence of independent Bernoulli random variables with parameter $2^{(\eta-\dim_{\mathcal{H}} (\I))j}$. Basic definitions regarding wavelets are provided in Section \ref{rappel}. As mentioned above, the multifractal analysis of the basic case corresponding to $\I=[0,1]$ was studied by Jaffard \cite{Jaffard:00b}. Despite its very simple structure, this wavelet series already reveals a very interesting multifractal structure. When a fractal set $\I$ satisfy the assumptions of more general versions of the MTP, the multifractal study of LWS defined on $\I$ become feasible, using similar arguments to those of Jaffard for the case $[0,1]$. It is the case for Cantor subsets and more generally for some attractors of Iterated Function Systems (see \cite{EV23} for the study of LWS on Cantor sets).

In contexts where classical Mass Transference Principles cannot be applied, alternative strategies are required to obtain lower bounds for the dimension of limsup sets. To address this issue and enable multifractal analysis of lacunary wavelet series on fractals $\mathcal{I}$ -- which satisfy the equality between their Hausdorff and box-counting dimensions but, for example, lack separation properties -- we use another strategy. 
Before considering the limsup of balls, we perform  some ``preconditionning'' of the fractal set. This consists in constructing compact subsets $\mathcal{K}$, which we call quasi-Cantor sets, within $\mathcal{I}$. These subsets exhibit controlled self-similarity on specific dyadic scales and have a dimension that can be made arbitrarily close to that of $\I$. This controlled self-similarity facilitates accurate dimension estimation of limsup sets of balls centered at random points in $\mathcal{K}$. This approach avoids the limitations of classical MTPs and allows to get useful geometric information even in cases where standard assumptions fail.  Moreover, it not only generalizes classical results on lacunary series but also offers deeper understanding of the fractal geometry of the set $\mathcal{I}$.

Let us state our mains results. The first one provides the announced construction of quasi-Cantor subsets of $\I$, by controlling the duplication rate up to some $\varepsilon > 0$, from a scale $J$ to scales $(1+b)^\ell J$ for any $\ell\geq 0$. 

\begin{theorem}\label{theo:Keps}
Let $\I \subset [0,1]$ be a compact set satisfying $$\dim_{\mathcal{H}} (\I)= \dim_{u-box} (\I)>0$$  and denote by $\I_j$ the set of dyadic intervals of size $2^{-j}$ that intersect $\I$. Let $\varepsilon>0$ and $ b \in (0,1)$. There exists a set $\K_{\eps}(b) \subset \I$ and $J, \ell_0 \in \N$ such that
\begin{enumerate}
\item $ \dim_{\mathcal{H}}  (\I) -\eps \le  \dim_{\mathcal{H}}  (\K_{\eps}(b) )\le  \dim_{\mathcal{H}}  (\I)$
\item for all $j=\lfloor (1+b)^\ell J \rfloor$ with $\ell \ge \ell_0$,  one has
$$2^{j( \dim_{\mathcal{H}} (\I)-\eps)} \le \# \{ \lambda \in \I_j : \, \lambda \cap \K_{\eps}(b) \neq \emptyset \} \le 2^{j( \dim_{\mathcal{H}} (\I)+\eps)} ,$$ 
\item for each dyadic interval $\lambda \in \I_j$ such that $\lambda \cap \K_{\eps}(b)  \neq \emptyset$, with  $j=\lfloor(1+b)^\ell J\rfloor$ and $\ell \ge \ell_0$,  and for any $\ell' \ge 1$, one has
$$
 \# \{ \lambda' \in \I_{\lfloor(1+b)^{\ell+\ell'}J\rfloor}: \, \lambda' \subset \lambda \, ,  \, \lambda' \cap \K_{\eps}(b)  \neq \emptyset \}\ge 2^{((1+b)^{\ell'}-1)(1+b)^\ell J(  \dim_{\mathcal{H}} (I)-\frac{5\eps}{b})} 
$$
\end{enumerate}
Such a set is called an $(\eps,b)$-quasi-Cantor subset of $\I$.
\end{theorem}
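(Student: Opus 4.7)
The plan is to construct $\K_\eps(b)$ as a Cantor-like nested intersection $\bigcap_\ell \bigcup_{\lambda \in \mathcal K_\ell} \lambda$ where $\mathcal K_\ell \subseteq \I_{j_\ell}$ at the scales $j_\ell := \lfloor (1+b)^\ell J \rfloor$. The starting observation is that the hypothesis $\dim_\mathcal{H}(\I) = \dim_{u-box}(\I) = d$, combined with the standard inequality bounding Hausdorff dimension above by the lower box-counting dimension, forces both box dimensions to equal $d$. Consequently, for any $\eps' \in (0,\eps)$ there exists $J$ with
$$ 2^{j(d-\eps')} \;\le\; \#\I_j \;\le\; 2^{j(d+\eps')} \qquad \text{for every } j \ge J. $$
Fix the target branching $T_\ell := \lceil 2^{(j_{\ell+1}-j_\ell)(d - 5\eps/b)} \rceil$. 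The objective is a family $(\mathcal K_\ell)$ with (i)~$2^{j_\ell(d-\eps)} \le \#\mathcal K_\ell \le 2^{j_\ell(d+\eps)}$ and (ii)~every $\lambda \in \mathcal K_\ell$ has at least $T_\ell$ children in $\mathcal K_{\ell+1}$; iterating (ii) immediately yields item~3 since $\prod_{i=0}^{\ell'-1} T_{\ell+i} \ge 2^{(j_{\ell+\ell'}-j_\ell)(d - 5\eps/b)}$.

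To build such a family I would use a backward pruning procedure truncated at a large level $L$. Set $\mathcal K_L^{(L)} := \I_{j_L}$ and define, for $\ell = L-1, L-2, \ldots, 0$,
$$ \mathcal K_\ell^{(L)} \;:=\; \{ \lambda \in \I_{j_\ell} : \lambda \text{ has at least } T_\ell \text{ children in } \mathcal K_{\ell+1}^{(L)}\}. $$
The key combinatorial step is a one-level estimate: decomposing $\#\mathcal K_{\ell+1}^{(L)} = \sum_\lambda N^{(L)}(\lambda)$, one splits the sum between poor parents (contributing at most $\#\I_{j_\ell}\cdot T_\ell$) and rich ones (contributing at most $2^{j_{\ell+1}-j_\ell}$ each) to obtain a lower bound on $\#\mathcal K_\ell^{(L)}$ in terms of $\#\mathcal K_{\ell+1}^{(L)}$. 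The $5\eps/b$ margin in $T_\ell$ is calibrated so that, using $j_{\ell+1}-j_\ell \asymp b\,j_\ell$, the poor contribution is negligible compared to $\#\mathcal K_{\ell+1}^{(L)}$ and a backward induction preserves $\#\mathcal K_\ell^{(L)} \ge 2^{j_\ell(d-\eps)}$ uniformly in $L$ once $\ell \ge \ell_0(\eps,b)$. A diagonal compactness extraction (each $\I_{j_\ell}$ is finite) then produces an infinite nested family $(\mathcal K_\ell)$ inheriting both properties.

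Items~2 and~3 follow directly: the upper bound in item~2 is immediate from $\mathcal K_\ell \subseteq \I_{j_\ell}$, the lower bound is the surviving induction, and item~3 is the iteration of the branching. For item~1, I would equip $\K_\eps(b)$ with the natural probability measure $\mu$ obtained by splitting mass uniformly among the surviving children at each step, then verify the Frostman-type inequality $\mu(B(x,r)) \le C r^{d-\eps}$ by locating $r$ between $2^{-j_{\ell+1}}$ and $2^{-j_\ell}$ and using $\mu(\lambda) \lesssim 1/\#\mathcal K_\ell$ together with $\#\mathcal K_\ell \ge 2^{j_\ell(d-\eps)}$; the mass distribution principle then gives $\dim_{\mathcal H}(\K_\eps(b)) \ge d-\eps$.

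The main obstacle I anticipate is the tight bookkeeping of error terms so that the three loss parameters $\eps'$, $5\eps/b$, and $\eps$ remain mutually compatible through the backward induction. The crude single-level inequality loses a factor $2^{j_{\ell+1}-j_\ell}$ that must be compensated by the built-in $5\eps/b$ slack in $T_\ell$ and by choosing $\eps'$ sufficiently smaller than $\eps$; working out how large $\ell_0$ must be so that the $O(1)$ additive slack in the chain of inequalities does not pollute the geometric count $2^{j_\ell(d-\eps)}$ at every scale is where the technical heart of the argument lies.
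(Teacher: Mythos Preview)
Your backward-pruning scheme has a genuine gap in the one-level estimate. When you bound the rich parents' contribution by $2^{j_{\ell+1}-j_\ell}$ children each, the induction step reads
\[
\#\mathcal K_\ell^{(L)} \;\ge\; \frac{\#\mathcal K_{\ell+1}^{(L)} - \#\I_{j_\ell}\cdot T_\ell}{2^{j_{\ell+1}-j_\ell}},
\]
and for this to propagate $\#\mathcal K_\ell^{(L)}\ge 2^{j_\ell(d-\eps)}$ you would need $(j_{\ell+1}-j_\ell)(d-\eps-1)\ge 0$, i.e.\ $d\ge 1+\eps$, which is impossible. The $5\eps/b$ slack in $T_\ell$ only kills the \emph{poor} contribution; it does nothing against the overcounting of children of rich parents. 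The paper repairs exactly this: it introduces the trichotomy $SD/ND/FD$ and shows, using the upper-box bound $\#\I_{(1+b)j}\le 2^{(1+b)j(H+\eps)}$, that the ``fast duplicators'' satisfy $\#FD(j,b,\eps)\le 2^{j(H-2\eps)}$. The recursive family $T_\ell(J,b,\eps)$ is then built \emph{inside} $ND$, so each surviving interval has at most $2^{jb(H+4\eps)}$ children rather than $2^{jb}$, and the induction closes.

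There is a second gap in your dimension argument for item~1. With mass split uniformly among surviving children, one gets $\mu(\lambda)\le \prod_{i<\ell} T_i^{-1}\approx 2^{-j_\ell(d-5\eps/b)}$, \emph{not} $\mu(\lambda)\lesssim 1/\#\mathcal K_\ell$: the branching is not uniform, so the measure is not equidistributed on $\mathcal K_\ell$. The Frostman bound you would actually obtain is $\dim_{\mathcal H}(\K_\eps(b))\ge d-5\eps/b$, which is strictly weaker than the claimed $d-\eps$ since $b<1$. The paper avoids building any measure on $\K_\eps(b)$: instead it covers the \emph{complement} $\I\setminus \K_\eps(b)$ (the union of $FD$, $SD$-children, and the discarded sets $U_\ell$) and shows directly that $\mathcal H^{H-\eps}$ of this complement is finite; since $\dim_{\mathcal H}(\I)=H$, the set $\K_\eps(b)$ must carry dimension at least $H-\eps$. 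This complement-covering argument is the mechanism that decouples the $\eps$ in items~1--2 from the $5\eps/b$ in item~3.
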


The previous theorem is the main tool for studying the multifractal properties of lacunary wavelet series on $\mathcal{I}$, as stated in our second main result.

\begin{theorem}\label{thm:spectreLWS}
Let $\I \subset [0,1]$ be a compact set satisfying $\dim_{\mathcal{H}} (\I)= \dim_{u-box} (\I)>0$,  let $\alpha>0$ and let   $\eta \in (0, \dim_{\mathcal{H}} (\I))$.  Assume that the regularity $r(\psi)$  of the wavelet is  greater than $\frac{\dim_{\mathcal{H}} (\I)}{\eta} +1$. 
 If $f$ is the lacunary wavelet series on $\I$ of parameters $(\alpha, \eta)$, then
\begin{equation}\label{eq:spectre}
  \dim_{\mathcal{H}} \big\{ x \in [0,1] : h_f(x) \leq h \} = \frac{\eta}{\alpha}h\, , \quad \forall h \in [\alpha, \alpha \frac{\dim_{\mathcal{H}}(\I)}{\eta}]
\end{equation}
where  $h_f(x)$ denotes the H\"older exponent of $f$ at $x$, see Definition \ref{def:holder}. 
\end{theorem}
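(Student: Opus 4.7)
The plan is to adapt Jaffard's scheme for the multifractal spectrum of lacunary wavelet series to the fractal setting, using Theorem~\ref{theo:Keps} in place of the self-similarity of $[0,1]$. The wavelet characterization of the H\"older exponent (Theorem~\ref{thm:waveletcharact}, Proposition~\ref{prop:Edelta}), combined with the fact that every nonzero wavelet coefficient of $f$ equals $2^{-\alpha j}$, gives that up to negligible corrections
$$
\{x \in [0,1] : h_f(x) \leq h\} = \bigcap_{\delta < \alpha/h} E^\delta,\qquad E^\delta := \limsup_{j\to\infty}\bigcup_{k\in\I_j,\,\xi_{j,k}=1}B\bigl(k\,2^{-j},\,2^{-\delta j}\bigr).
$$
As $h$ varies in $[\alpha,\alpha\dim_{\mathcal{H}}(\I)/\eta]$, the parameter $\delta=\alpha/h$ varies in $[\eta/\dim_{\mathcal{H}}(\I),1]$, and the theorem reduces by continuity in $\delta$ to the claim $\dim_{\mathcal{H}}(E^\delta)=\eta/\delta$ almost surely for every such $\delta$.

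For the upper bound I would use a direct covering argument. A Chernoff estimate on the independent Bernoullis $(\xi_{j,k})_{k\in\I_j}$, combined with $\#\I_j\leq 2^{(\dim_{\mathcal{H}}(\I)+\eps)j}$ (coming from $\dim_{u-box}(\I)=\dim_{\mathcal{H}}(\I)$), yields almost surely $\#\{k\in\I_j:\xi_{j,k}=1\}\leq 2^{(\eta+\eps)j}$ for every $j\geq J_0$. Covering $E^\delta$ by the corresponding balls for all $j\geq J\geq J_0$ gives $\mathcal{H}^s(E^\delta)=0$ for every $s>(\eta+\eps)/\delta$, and letting $\eps\to 0$ yields $\dim_{\mathcal{H}}(E^\delta)\leq \eta/\delta$.

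For the lower bound I would fix small $\eps>0$ and $b\in(0,1)$, apply Theorem~\ref{theo:Keps} to obtain a quasi-Cantor subset $\Ke(b)\subset\I$, and then build a random generalized Cantor set $C\subset E^\delta\cap\Ke(b)$ along the sparse subsequence of scales $\lfloor(1+b)^{m\ell'}J\rfloor$, $m\geq 0$, where the gap $\ell'$ is chosen large enough that $(1+b)^{\ell'}\eta>\dim_{\mathcal{H}}(\I)$. At generation $m$ I retain the intervals $\lambda\in\I_{\lfloor(1+b)^{m\ell'}J\rfloor}$ that meet $\Ke(b)$ and contain at least one dyadic sub-interval at scale $\lfloor(1+b)^{(m+1)\ell'}J\rfloor$ whose associated Bernoulli variable equals one, and I place the ball of radius $2^{-\delta\lfloor(1+b)^{(m+1)\ell'}J\rfloor}$ around one such selected center. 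Item~(3) of Theorem~\ref{theo:Keps} produces $\approx 2^{((1+b)^{\ell'}-1)(1+b)^{m\ell'}J(\dim_{\mathcal{H}}(\I)-5\eps/b)}$ candidate descendants per surviving parent, and a Chernoff bound on the independent Bernoullis shows that, with probability exponentially close to one, a fraction $\approx 2^{(\eta-\dim_{\mathcal{H}}(\I))(1+b)^{(m+1)\ell'}J}$ of them are selected. Distributing mass uniformly across the resulting tree yields a Frostman measure on $C$ whose exponent equals
$$
\frac{(1+b)^{\ell'}\eta - \dim_{\mathcal{H}}(\I)}{\delta\,\bigl((1+b)^{\ell'}-1\bigr)}-O(\eps/b),
$$
which tends to $\eta/\delta$ as $\eps,b\to 0$ and $\ell'\to\infty$, so the mass distribution principle gives $\dim_{\mathcal{H}}(E^\delta)\geq \eta/\delta$ after passing to the limits.

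The main obstacle is the interplay between the deterministic combinatorics provided by Theorem~\ref{theo:Keps} and the exponentially small Bernoulli selection probability $2^{(\eta-\dim_{\mathcal{H}}(\I))j}$: in a single step between consecutive scales $j_\ell$ and $j_{\ell+1}$ the expected number of selected descendants per parent is less than one, so the Cantor construction cannot proceed directly. The geometric spacing of the scales in Theorem~\ref{theo:Keps} is essential, since it allows the gap $\ell'$ between kept generations to be inflated until Chernoff concentration kicks in, while the $5\eps/b$ loss in the duplication rate stays negligible; the constraint $(1+b)^{\ell'}\eta>\dim_{\mathcal{H}}(\I)$ also automatically guarantees that the balls $B(\cdot,2^{-\delta j_{m+1}})$ nest inside their dyadic parents. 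The remaining ingredients --- almost-sure elimination of exceptional events via Borel--Cantelli, the Frostman-to-Hausdorff upgrade, and the endpoint values $h=\alpha$ and $h=\alpha\dim_{\mathcal{H}}(\I)/\eta$ --- are standard and follow the template of~\cite{Jaffard:00b,EV23}.
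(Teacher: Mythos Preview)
Your outline matches the paper's scheme: the upper bound is obtained exactly as you describe (Borel--Cantelli on the Bernoulli counts combined with the covering of $E_\delta(f)$), and the lower bound also proceeds via a Cantor set built inside $\K_{\eps}(b)$ together with the mass distribution principle. The key insight --- inflating the gap between generations so that the expected number of selected descendants per parent exceeds one, i.e.\ your condition $(1+b)^{\ell'}\eta>\dim_{\mathcal H}(\I)$ --- is precisely the paper's choice of $\beta_n$ with $(1+\beta_n)\eta=H+\tfrac{1}{n}$ and $(1+\beta_n)=(1+b_n)^{\ell}$.

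There is, however, an organizational difference worth noting. The paper decouples the randomness from the Cantor construction: it first shows (Proposition~\ref{prop:BC}) via a single Borel--Cantelli argument that, almost surely, \emph{every} dyadic interval of $\K_{\eps_n}(b_n)$ at scale $(1+\beta_n)^p$ contains at least one selected descendant at scale $(1+\beta_n)^{p+1}$, so that $\K_{\eps_n}(b_n)$ is deterministically covered by the un-contracted balls. The Cantor tree is then built \emph{deterministically} from the self-similarity of $\K_{\eps_n}(b_n)$ (item~(3) of Theorem~\ref{theo:Keps}), with the contraction governed by an auxiliary parameter $s=(1+b_n)^{q_0}$ chosen so that the contracted radius $2^{-s(1+\beta_n)^{p}}$ lands exactly on a quasi-Cantor scale; this yields the Frostman exponent $(H-7b_n)/(s(1+b_n))$, and relating $s$ to $\delta$ gives $\eta(H-7b_n)/(\delta(H+\tfrac1n)(1+b_n)^2)\to\eta/\delta$.

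Your version merges the random selection into the tree itself. This is legitimate, but your description contains an ambiguity: you say you place a ball around \emph{one} selected center per surviving parent, which would prevent branching, yet your Frostman exponent $\tfrac{(1+b)^{\ell'}\eta-H}{\delta((1+b)^{\ell'}-1)}$ only comes out if you keep \emph{all} (roughly $2^{j_m(\gamma\eta-H)}$) selected descendants per parent. You would also need to argue that $\delta j_{m+1}$ is close enough to a quasi-Cantor scale for item~(3) to apply at the next step; the paper sidesteps this via the discrete parameter $s=(1+b_n)^{q_0}$. These are fixable, but the paper's separation of concerns makes the bookkeeping noticeably cleaner.
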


Fractal sets which fail to satisfy the assumption \eqref{eqGMT} - known as the Ahlfors regularity - but satisfy \eqref{eq:egaldim} are numerous in classical fractal geometry. To conclude this introduction, let us consider in detail the notable examples of attractors of IFS. Let $S=\{f_1, \dots, f_n \}$ denote a finite set of contracting similitudes on $\mathbb{R}$ with respective Lipschitz constants $r_1, \dots, r_n$. Then, there exists a unique compact and non-empty set $\mathcal{I}$ such that
    $$
    \mathcal{I} = \bigcup_{j=1}^n f_j(\mathcal{I}),
    $$
which is called the attractor of the IFS $S$. The attractor $\I$ easily satisfies the relation \eqref{eq:egaldim}. To $S$, one can also associate its similarity dimension $\dim(S)$, defined as the unique $\alpha$ solution of  the equation 
$$
\sum_{j=1}^n r_j^\alpha = 1.
$$
When the smaller copies of $\I$ are sufficiently well-separated, the fine-scale structure of the set becomes relatively straightforward to analyze, and, in particular, the Hausdorff dimension can be determined explicitly using the properties of the defining similitudes. Specifically, if the IFS $S$ satisfies the open set condition (OSC), the Hausdorff dimension of $\mathcal{I}$ is equal to $\dim(S)$. The OSC requires the existence of a non-empty open set $V$ such that the images $f_j(V)$, for $j \in {1, \dots, n}$, are mutually disjoint and entirely contained within $V$. The following result, established in \cite{Schief}, demonstrates that the OSC is also the key separation condition for ensuring that self-similar fractals possess a positive Hausdorff measure.
 
\begin{proposition}\cite{Schief}
Let $\mathcal{I}$ be the attractor of an IFS $S$ such that $\dim (S )= \dim_{\mathcal{H}}(\I)$. Then, $S$ satisfies the  OSC  if and only if 
    $$
    \mathcal{H}^{\dim_{\mathcal{H}}(\I)} (\mathcal{I})>0.
    $$
\end{proposition}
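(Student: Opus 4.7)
The proposition has two implications, one classical and easy, the other the deep contribution of Schief. I would treat them separately.

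For the direction ``OSC implies $\mathcal{H}^s(\mathcal{I})>0$'' (with $s=\dim(S)=\dim_{\mathcal H}(\mathcal I)$), this is Hutchinson's original theorem. The plan is standard: let $V$ be the open set given by the OSC, fix a closed ball $B(x_0,\rho)\subset V$, and consider for each $n$ the cylinder sets $f_w(\mathcal{I})$ with $w=(w_1,\dots,w_n)$, whose union covers $\mathcal{I}$ and whose diameters are $r_w\,\mathrm{diam}(\mathcal{I})$ with $r_w=r_{w_1}\cdots r_{w_n}$. A volume/packing argument based on the disjointness of the $f_w(V)$ shows that any ball of radius $R$ meets at most a bounded number (independent of $n$) of cylinders $f_w(\mathcal{I})$ with $r_w \asymp R$. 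Combined with $\sum_w r_w^{s}=1$ (iterated from $\sum_j r_j^s=1$), this gives a lower bound on $\mathcal{H}^s(\mathcal{I})$, while the obvious cover by cylinders gives the upper bound.

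For the direction ``$\mathcal{H}^s(\mathcal{I})>0$ implies OSC,'' the plan is as follows. First, since $\sum_j r_j^s=1$ and $\mathcal{I}=\bigcup_j f_j(\mathcal{I})$ with $\mathcal{H}^s(f_j(\mathcal{I}))=r_j^s\,\mathcal{H}^s(\mathcal{I})$, the subadditivity inequality must be an equality, forcing $\mathcal{H}^s(f_i(\mathcal{I})\cap f_j(\mathcal{I}))=0$ for $i\neq j$. Iterating on the symbolic space, $\mathcal{H}^s(f_w(\mathcal{I})\cap f_{w'}(\mathcal{I}))=0$ whenever $w,w'$ are distinct words of the same length. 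Second, use the fact that $0<\mathcal{H}^s(\mathcal{I})<\infty$ together with the density theorem for Hausdorff measures to select a point $x_0\in\mathcal{I}$ of strictly positive upper $s$-density. Third, fix a small ball $B=B(x_0,r)$ and, for each $n$, consider the family $\mathcal{W}_n$ of cylinders $f_w(\mathcal{I})$ of diameter comparable to $r$ that meet $B$. The positive density at $x_0$ and the pairwise null overlap force $\#\mathcal{W}_n$ to remain uniformly bounded.

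Fourth, one extracts from this family an open set $V$ witnessing OSC. The natural candidate is to start from $B$ (or a slightly shrunken copy) and pull back along an appropriate cylinder containing $x_0$; self-similarity then promotes the ``almost disjointness up to $\mathcal{H}^s$-null sets'' at a single scale into genuine topological disjointness $f_i(V)\cap f_j(V)=\emptyset$ with $\bigcup_j f_j(V)\subset V$. The main obstacle is precisely this last step: converting a measure-theoretic ``overlaps are null'' statement into a clean topological separation. Schief's trick is to exploit the scale invariance of the picture — any overlap in $V$ would, via self-similarity and the finite covering count from Step 3, create overlaps of positive $\mathcal{H}^s$-measure at every scale, contradicting Step 1. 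Making this pigeonhole/compactness argument rigorous, and verifying that the constructed $V$ is open, nonempty, and stable under the $f_j$'s, is where all the real work lies.
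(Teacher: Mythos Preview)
The paper does not prove this proposition: it is quoted from \cite{Schief} in the introduction purely as motivation (to explain why attractors of IFS without the OSC fail Ahlfors regularity even when $\dim(S)=\dim_{\mathcal H}(\mathcal I)$), and no argument is supplied. There is therefore nothing in the paper to compare your attempt against.

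As a standalone sketch of Schief's theorem your outline is reasonable. The forward implication is indeed Hutchinson, and your plan there is the standard one. For the converse, your Steps 1--3 (null overlaps of cylinders from additivity of $\mathcal H^s$, a density point, and a uniform bound on the number of comparable cylinders meeting a small ball) are the right ingredients. You are also right that the crux is Step 4, and your description of it remains at the level of a heuristic: ``self-similarity promotes almost-disjointness into topological disjointness'' is the slogan, not the argument. Schief's actual construction is more concrete --- he builds $V$ as a union of images $f_w(U_0)$ of a fixed small ball $U_0$ over a carefully chosen set of words, and the uniform bound from Step 3 is used via a pigeonhole/compactness argument on the finite group of similarities to show that two first-generation images of this union cannot overlap. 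If you want to turn your sketch into a proof, that explicit description of $V$ and the finiteness argument are what you are missing.
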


The previous result shows that for any IFS that does not satisfy the OSC but for which the expected equality  $\dim_{\mathcal{H}}(\I)=\dim(S)$ holds, the assumption  \eqref{eqGMT} is not fulfilled. Note that the OSC is known to be very restrictive, see e.g. \cite{Li15}. To give a tangible example, the IFS $S=\{x/3,(x+1)/3, (x+u)/3\}$ does not satisfy the OSC if $u$ is irrational \cite{Kenyon95}. 

The paper is organized as follows. Section \ref{rappel} provides a summary of key definitions related to dimensions, wavelets, Hölder regularity, and multifractal analysis. Section \ref{sec:goodrate} focuses on results and proofs from \cite{EV23}, which estimate the number of dyadic intervals that replicate at the expected rate -- relatively to the fractal set’s dimension -- at the subsequent scale. These estimates serve as the foundation for constructing the fractal set  $\K_{\eps}(b)$ in Section \ref{sec:Keps}, where Theorem \ref{theo:Keps} is proved. Finally, Section \ref{sec:LWS} is dedicated to the proof of Theorem \ref{thm:spectreLWS}.

\section{Basic definitions and classical results on multifractal analysis}\label{rappel}
\subsection{Fractal dimensions}

In this subsection, we introduce the fundamental concepts of fractal dimensions needed in this paper.  We refer to \cite{Falconer:86}, for example,  for more details.

\begin{definition}[Hausdorff measure and Hausdorff and upper-box dimensions]\label{def:hausdorff}
Let $\I \subset \R$ and $\delta >0$. For $ s \in [0,1]$, set
$$
\mathcal{H}^s_{\delta} (\I)= \inf \left\{  \sum_{i\in \N}
  \diam(A_i)^{s} :  \I \subset \bigcup_{i\in \N} A_i\,\,  \text{\rm and
  }\diam(A_i) <\delta \,\, \forall i \in \N\right\}.
$$
The $\delta$-dimensional Hausdorff measure of $\I$ is $\mathcal{H}^s(\I) =
\lim_{\delta \to 0} \mathcal{H}^s_{\delta}(\I)$ and the Hausdorff dimension of $\I$ is given by
$$
\dim_{\mathcal{H}}(\I) = \inf \{ s \ge 0 : \, \mathcal{H}^s(E) = 0 \}
= \sup \{ s \ge 0 :  \, \mathcal{H}^s(\I)= +\infty\}.
$$

\end{definition}
We take the usual convention that $\dim_{\mathcal{H}}(\emptyset)= -\infty$. 

\begin{definition}
    Let $\I \subset \R$. The upper-box dimension of $\I$ is given by
$$
\dim_{u-box}(\I) := \limsup_{\varepsilon \to 0} \frac{\log N(\varepsilon)}{-\log(\varepsilon)}
$$
where $N(\varepsilon)$ is the smallest number of intervals of length $\varepsilon$ which can cover $\I$. 
\end{definition}

\noindent The following relationship between the two dimensions always holds.

\begin{lemma}
For any set $\I$, one has
\begin{equation}
\dim_{\mathcal{H}}(\I) \le \dim_{u-box}(\I).
\end{equation}
\end{lemma}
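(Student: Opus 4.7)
The plan is to prove the comparison via the standard route: show that $\mathcal{H}^s(\I) = 0$ whenever $s$ strictly exceeds $\dim_{u-box}(\I)$, and then take the infimum over such $s$. This reduces the lemma to producing, for every such $s$, suitable covers of $\I$ whose total $s$-power of diameters tends to $0$ as the mesh shrinks.

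First I would fix an arbitrary $s > \dim_{u-box}(\I)$ and insert an intermediate value $t$ with $\dim_{u-box}(\I) < t < s$. By the definition of $\limsup$, there exists $\varepsilon_0 > 0$ such that for all $\varepsilon \le \varepsilon_0$ one has $\log N(\varepsilon) / (-\log \varepsilon) < t$, i.e.\ $N(\varepsilon) \le \varepsilon^{-t}$. For any $\delta \in (0, \varepsilon_0]$, I would then pick an optimal (or nearly optimal) cover of $\I$ by $N(\delta)$ intervals of length $\delta$; these form an admissible cover in the definition of $\mathcal{H}^s_\delta(\I)$, and so
\[
\mathcal{H}^s_\delta(\I) \;\le\; N(\delta)\,\delta^s \;\le\; \delta^{-t}\delta^s \;=\; \delta^{s-t}.
\]

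Letting $\delta \to 0$, and using $s - t > 0$, I obtain $\mathcal{H}^s(\I) = \lim_{\delta \to 0} \mathcal{H}^s_\delta(\I) = 0$. By the definition of Hausdorff dimension as $\inf\{s \ge 0 : \mathcal{H}^s(\I) = 0\}$, this yields $\dim_{\mathcal{H}}(\I) \le s$. Since $s$ was arbitrary strictly above $\dim_{u-box}(\I)$, taking the infimum over all such $s$ gives the announced inequality $\dim_{\mathcal{H}}(\I) \le \dim_{u-box}(\I)$.

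There is essentially no substantial obstacle here; the only subtle point is the need to insert the intermediate exponent $t$ between $\dim_{u-box}(\I)$ and $s$, which is what allows the bound $\delta^{s-t}$ to vanish in the limit. A one-step attempt using $s$ directly in place of $t$ would only give $\mathcal{H}^s_\delta(\I) \le 1$, which is not enough to conclude that $\mathcal{H}^s(\I) = 0$. One should also note that the convention $\dim_{\mathcal{H}}(\emptyset) = -\infty$ makes the statement trivially true when $\I$ is empty, so the argument above can be read under the harmless assumption $\I \neq \emptyset$.
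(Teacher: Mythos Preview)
Your proof is correct and is the standard argument; the paper itself states this lemma without proof, treating it as a well-known fact. There is nothing to compare.
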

\subsection{Multifractal analysis and computation via wavelets}
The aim of multifractal analysis is to describe the local change of smoothness of a function (or a measure). For functions, the smoothness is characterized by the pointwise H\"older  exponent, defined as follows.
\begin{definition}\label{def:holder}
Let $x_0 \in \R$ and $h>0$. A locally bounded function $f : \R \to \R$ belongs to $\C^{h}(x_0)$ if there exists $C>0$ and a polynomial $P_{x_0}$ with $\deg P_{x_{0}} < \lfloor h \rfloor$  such that 
$$
\vert f(x)-P_{x_0}(x) \vert \le C \vert x -x_0 \vert^{h}
$$
on a neighborhood of $x_0$. The \emph{pointwise H\"older exponent} of $f$ at $x_0$ is 
$$
h_f(x_0) = \sup \{ h\geq 0 : \, f \in \C^{h}(x_0)\}.
$$
\end{definition}
In many situations and in particular for stochastic processes for which it can change at each realization, we are not able to compute the pointwise exponent at each point. However, a relevant information is given by the dimension of the iso-H\"older set for any level $h \in [0, + \infty]$, which consists of all points  where the function $f$ has a H\"older exponent exactly equal to $h$. This information is given by the multifractal spectrum.
\begin{definition}\label{def:spectrum}
The \emph{multifractal spectrum} $\mathcal{D}_{f}$ of a locally bounded function $f$ is the function 
$$
 {\cal{D}}_f : h \in  [0, + \infty] \mapsto \dim_{\mathcal{H}}\{x_0 \in \R : \, h_f(x_0) = h\}.
$$
The \emph{increasing multifracal spectrum} of $f$ is the function 
$$
 {\cal{D}}_{f,\le} : h \in  [0, + \infty] \mapsto \dim_{\mathcal{H}}\{x_0 \in \R : \, h_f(x_0) \leq h\}.
$$
\end{definition}

The pointwise regularity of a locally bounded function $f$ can be characterized with the help of a wavelet analysis of the function. Before recalling this classical result, let us give the definition of a wavelet basis.  We refer to \cite{Daubechies:92,Meyer:95,Mallat:99} e.g. for more information about wavelets.

An orthonormal wavelet basis on $\R$ is given by two functions
$\varphi$ and  $\psi$ with the property that
the family
$$
\{ \varphi(\cdot -k) :\, k \in \Z\} \cup \{ 2^{\frac{j}{2}} \psi(2^j
\cdot -k): \, j \in \N, k \in \Z \}
$$
forms an orthonormal basis of $L^2(\R)$. Therefore, for all $f \in L^2(\R)$, we have the following decomposition 
$$
f= \sum_{k \in \Z} C_k \varphi(\cdot -k) +  \sum_{j \in \N} \sum_{k \in \Z} c_{j,k} \psi(2^j\cdot -k)
$$
where the wavelet coefficients of $f$ are given by
$$
C_k = \int_{\R} f(x) \varphi(x-k) dx
$$
and
$$
c_{j,k} = 2^{j} \int_{\R} f(x) \psi(2^jx -k)dx.
$$
Note that we do not use the $L^2$-normalisation to avoid a rescaling
in the definition of the wavelet leaders, see Definition
\ref{def:leader} below. Note also that the definition of the wavelet
coefficients makes sense even if $f$ does not belong to $L^{2}(\R)$. 

We say in addition that the wavelet basis is $r$-smooth if $\varphi$ and
$\psi$ have partial derivatives up to order $r$ and if these partial
derivatives have fast decay. In this case, the wavelet $\psi$ has a
corresponding number of vanishing moments. Constructions of wavelet bases with arbitrarily large $r$ have been proposed in  \cite{Daubechies:92}. One can also chose a wavelet basis with infinitely smooth functions \cite{Meyer:95}.

Usually, the following compact
notations using dyadic intervals are used for indexing wavelets. If
$\lambda=\lambda_{j,k}= [k2^{-j}, (k+1)2^{-j}[$, we write
$c_{\lambda}=c_{j,k}$ and $\psi_{\lambda}= \psi_{j,k}=
\psi(2^j\cdot -k)$.  These notations are
justified by the fact that the wavelet $\psi_{\lambda}$ is essentially localized on the cube $\lambda$ in the following way : if the wavelets are compactly supported then
$$
\exists C>0 \text{ such that } \forall \lambda \quad \supp ( \psi_{\lambda}) \subset C\,  \lambda
$$ 
where $C\, \lambda$ denotes the interval of same center as $\lambda$
and $C$ times wider.

\begin{definition}\label{def:leader} Let $\lambda$ be a dyadic cube and $3 \lambda$ the cube of same center and three times wider. If $f$ is a bounded function, the wavelet leader $d_{\lambda}$ of $f$ is given by
$$
d_{\lambda} = \sup_{\lambda' \subset 3 \lambda} \vert c_{\lambda'} \vert.
$$
\end{definition}

The pointwise H\"older regularity of the function $f$ at a point $x_0$
can be determined using the wavelet leaders \cite{Jaffard:04b}. Let
$x_0 \in \R$, the notation $\lambda_j(x_0)$ refers to the dyadic cube of width $2^{-j}$ which contains $x_0$ and 
$$
d_j(x_0) = d_{\lambda_j(x_0)}= \sup_{\lambda' \subset 3 \lambda_j(x_0)} \vert c_{\lambda'} \vert.
$$

\begin{theorem}\cite{Jaffard:04b}\label{thm:waveletcharact}
Let $h>0$ and $x_0 \in \R$. Assume that $f$ is a bounded function and that the wavelet basis is $r$-smooth with $r > [h]+1$. 
\begin{enumerate}
\item Suppose  $f$ belongs to $\C^{h}(x_0)$. Then there exists $C>0$ such that
\begin{equation}\label{leaderholder}
\forall j \ge 0, \quad d_j(x_0) \le C2^{-hj}.
\end{equation}
\item Conversely, suppose (\ref{leaderholder}) holds and that $f$
  belongs to $\C^{\varepsilon}(\R)$ for some $\varepsilon >0$. Then $f$ belongs to $C^{h'}(x_0)$ for all $h'<h$. In particular, $h_f(x_0) \ge h$.
\item  Suppose $f \in \C^{\varepsilon}(\R)$. Then $$h_f(x_0) = \liminf_{j\to +\infty} \frac{\log (d_j(x_0))}{\log (2^{-j})}.$$
\end{enumerate}
\end{theorem}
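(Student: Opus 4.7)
The plan is to prove statements (1) and (2) separately and then deduce (3). For (3), if $f\in\C^\varepsilon(\R)$, every $h<h_f(x_0)$ satisfies $f\in\C^h(x_0)$, so (1) yields $d_j(x_0)\le C 2^{-jh}$; conversely, (2) states that if $d_j(x_0)\le C 2^{-jh}$ then $h_f(x_0)\ge h'$ for every $h'<h$. Combining these pins down $h_f(x_0)=\liminf_j \log d_j(x_0)/\log 2^{-j}$.

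For (1), the input is a polynomial $P_{x_0}$ of degree $<\lfloor h\rfloor$ with $|f(x)-P_{x_0}(x)|\le C|x-x_0|^h$ in a neighborhood of $x_0$. Fix $\lambda'\subset 3\lambda_j(x_0)$ at some scale $j'\ge j$. Since the wavelet basis is $r$-smooth with $r>\lfloor h\rfloor+1$, $\psi$ has at least $\lfloor h\rfloor+1$ vanishing moments and therefore $\int P_{x_0}\psi_{\lambda'}\,dx=0$, so $c_{\lambda'}=2^{j'}\int(f-P_{x_0})(x)\,\psi_{\lambda'}(x)\,dx$. The support of $\psi_{\lambda'}$ is contained in $C\lambda'\subset 3C\lambda_j(x_0)$, so every $x$ in it satisfies $|x-x_0|\le C'2^{-j}$; combined with $\|\psi_{\lambda'}\|_{L^1}=2^{-j'}\|\psi\|_{L^1}$ this gives $|c_{\lambda'}|\le C''2^{-jh}$ uniformly in $\lambda'$, whence $d_j(x_0)\le C''2^{-jh}$.

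For (2), I would build $P_{x_0}$ directly from the wavelet expansion. Fix $x$ near $x_0$ and let $J$ satisfy $2^{-J-1}<|x-x_0|\le 2^{-J}$. Writing $f=\sum_k C_k\varphi(\cdot-k)+\sum_{j,k}c_{j,k}\psi_{j,k}$, I Taylor-expand each $\varphi(x-k)$ and each $\psi_{j,k}(x)$ around $x_0$ up to degree $\lfloor h\rfloor$; the polynomial pieces, summed over all scales, are collected into $P_{x_0}$, while the Taylor remainders and the tail of the scale sum must be shown to be $O(|x-x_0|^{h'})$ for every $h'<h$. I split the scale sum into $j\le J$ and $j>J$. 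For $j\le J$, cubes near $x_0$ contribute through the leader bound $|c_\lambda|\le d_j(x_0)\le C 2^{-jh}$, while cubes far from $x_0$ are controlled by the uniform bound $|c_{j,k}|\le C 2^{-\varepsilon j}$ (from $f\in\C^\varepsilon(\R)$) combined with the fast decay of $\psi_{j,k}$ and its derivatives away from its support. For $j>J$, the leader bound on $\lambda_J(x_0)$ dominates every $c_\lambda$ with $\lambda\subset 3\lambda_J(x_0)$, and $\varepsilon$-regularity handles the rest, so the tail is bounded by a geometric series in $2^{-(h-h')(j-J)}$.

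The main obstacle is (2): verifying that $P_{x_0}$ genuinely defines a polynomial of degree $\le\lfloor h\rfloor$ (which requires the formal Taylor series of $\sum_{j,k}c_{j,k}\psi_{j,k}$ at $x_0$ to converge absolutely, using the uniform bound on $|c_{j,k}|$ and the vanishing-moment cancellation), and simultaneously controlling the interplay between `near' and `far' cubes across the scale $j=J$ where the two regimes must be glued. The unavoidable loss from $h$ to $h'<h$ is the standard logarithmic loss produced when summing scale-by-scale estimates of the form $\sum_{j\le J}2^{-jh}\cdot 2^{j(\lfloor h\rfloor+1)}\cdot|x-x_0|^{\lfloor h\rfloor+1}$, which is of order $J|x-x_0|^{h}$ in the borderline case and requires replacing $h$ by any $h'<h$ to absorb the $J$ factor.
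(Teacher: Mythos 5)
The paper does not prove this statement: it is quoted verbatim from Jaffard's work \cite{Jaffard:04b} as a known characterization, so there is no internal proof to compare against. Judged on its own, your outline follows the standard route of that reference. The deduction of (3) from (1) and (2) is correct, and your argument for (1) is essentially complete for compactly supported wavelets: the vanishing moments (available since $r>[h]+1$) kill $P_{x_0}$, and the $L^1$-normalization $\|\psi_{\lambda'}\|_{L^1}=2^{-j'}\|\psi\|_{L^1}$ exactly cancels the factor $2^{j'}$ in the definition of $c_{\lambda'}$, leaving the bound $C(C'2^{-j})^{h}$ uniformly over $\lambda'\subset 3\lambda_j(x_0)$. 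Note only that the paper also allows non-compactly supported (e.g.\ Meyer) wavelets, for which you must split the integral defining $c_{\lambda'}$ into a near region, where the local H\"older estimate applies, and a far region controlled by the fast decay of $\psi$ and the boundedness of $f$; and that for the finitely many small $j$ for which $C\lambda'$ escapes the neighborhood where $|f-P_{x_0}|\le C|x-x_0|^h$ holds, the constant must be adjusted using $\|f\|_\infty$.

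Part (2), however, is a roadmap rather than a proof, and you say so yourself. The two points you flag as ``the main obstacle'' are precisely the substance of the theorem: (i) showing that the candidate polynomial $P_{x_0}$, obtained by summing the degree-$\lfloor h\rfloor$ Taylor jets of all $\psi_{j,k}$ at $x_0$ weighted by $c_{j,k}$, converges absolutely (this uses the leader bound $|c_{j,k}|\le C2^{-jh}$ for cubes near $x_0$ together with the decay of derivatives of $\psi_{j,k}$ away from $k2^{-j}$, summed over $k$ at each scale and then over $j$); and (ii) the near/far bookkeeping at scales $j\le J$, where cubes with $|k2^{-j}-x_0|\gg 2^{-j}$ are \emph{not} controlled by $d_j(x_0)$ and must be handled by the uniform bound $|c_{j,k}|\le C2^{-\varepsilon j}$ from $f\in\C^{\varepsilon}(\R)$ traded against the rapid decay $(1+|2^jx_0-k|)^{-N}$. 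Until these sums are actually estimated, the claim that everything is $O(|x-x_0|^{h'})$ is an assertion, not a derivation; in particular the borderline computation producing the factor $J\sim\log(1/|x-x_0|)$, which is the reason the conclusion is only $C^{h'}(x_0)$ for $h'<h$, needs to be written out. Since the statement is an imported classical theorem, citing \cite{Jaffard:04b} for part (2) would be acceptable here, but as a self-contained proof your text stops short of one.
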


In what follows, we will thus always assume that the wavelet $\psi$ is
$r$-smooth with $r = r(\psi)$ large enough.

Consider now an $\alpha$-sparse wavelet series. By this, we mean a wavelet series $f$ 
$$
f = \sum_{j \in \N} \sum_{0 \le k \le 2^{j}-1} c_{j,k} \psi_{j,k} 
$$
with
$$
c_{j,k} = \begin{cases} 2^{-\alpha j} &\text{ if } k \in F_j\\ 0 &\text{ otherwise } \end{cases}
$$
where $F_j \subset \{0,\dots,  2^{j}-1\}$.
As a direct consequence of Theorem \cite{Jaffard:04b}, we have the following characterization of sets of points whose regularity is smaller than $h$. The proof is straighforward,  is provided in \cite{EV23} for the sake of completeness.

\begin{proposition}\label{prop:Edelta}
Let $f$ be a $\alpha$-sparse signal. For every $\delta
\in (0,1]$, we define the set 
\[
E_{\delta}(f):= \limsup_{j \rightarrow + \infty}\bigcup_{k \in F_{j}
}B\left( k 2^{-j}, 2^{-\delta (1-\varepsilon_{j})j} \right)
\]
where $\varepsilon_{j}$ has a null limit as $j \to \infty$. One has
\begin{enumerate}
\item  If $x \in E_{\delta}(f)$, then $h_{f}(x) \leq \frac{\alpha}{\delta}$.
\item  If $x \notin E_{\delta}(f)$, then $h_{f}(x) \geq \frac{\alpha}{\delta}$.
\end{enumerate}
\end{proposition}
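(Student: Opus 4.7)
\medskip
\noindent\textbf{Proof plan.} Both assertions are consequences of the wavelet leader characterisation of the pointwise Hölder exponent (Theorem \ref{thm:waveletcharact}(3)). Since $|c_{j,k}| \le 2^{-\alpha j}$ uniformly for an $\alpha$-sparse wavelet series and the wavelet is smooth, standard estimates guarantee $f \in \C^{\eps}(\R)$ for some $\eps>0$, so
\[
h_f(x) = \liminf_{j'\to +\infty} \frac{\log d_{j'}(x)}{\log 2^{-j'}}.
\]
The whole task thus reduces to controlling the leader $d_{j'}(x) = \sup_{\lambda' \subset 3\lambda_{j'}(x)} |c_{\lambda'}|$ at the right scale and using the fact that a nonzero coefficient equals $2^{-\alpha j}$ exactly.

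For (1), I would start from the definition of $E_\delta(f)$: there exist infinitely many $j$ together with $k \in F_j$ such that $|x - k 2^{-j}| \le 2^{-\delta(1-\eps_j) j}$. Setting $j' = \lfloor \delta(1-\eps_j) j\rfloor$, one has $j \ge j'$ (because $\delta \le 1$), and the elementary geometric observation that $k 2^{-j}$ lies within distance $2^{-j'}$ of $x$ places the dyadic interval $\lambda_{j,k}$ inside $3\lambda_{j'}(x)$; indeed $\lambda_{j,k}$ is a dyadic subinterval at scale $j$ of either $\lambda_{j'}(x)$ itself or one of its two immediate neighbours. Hence $d_{j'}(x) \ge 2^{-\alpha j}$ and, along this subsequence,
\[
\frac{\log d_{j'}(x)}{\log 2^{-j'}} \le \frac{\alpha j}{j'} = \frac{\alpha}{\delta(1-\eps_j)} + o(1) \xrightarrow[j \to +\infty]{} \frac{\alpha}{\delta},
\]
so taking the liminf yields $h_f(x) \le \alpha/\delta$.

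For (2), assume that there exists $J$ with $|x - k 2^{-j}| > 2^{-\delta(1-\eps_j) j}$ for every $j \ge J$ and every $k \in F_j$. Fix a large $j'$ and let $c_{j,k}$ be any nonzero coefficient counted in $d_{j'}(x)$, i.e. with $k \in F_j$ and $\lambda_{j,k} \subset 3\lambda_{j'}(x)$. The inclusion forces $|k 2^{-j} - x| \le 2\cdot 2^{-j'}$, whence the standing assumption gives $2^{-j'+1} > 2^{-\delta(1-\eps_j) j}$, that is $j \ge (j'-1)/[\delta(1-\eps_j)]$. Let $j^*$ denote the smallest such $j$; then $j^* \to +\infty$ with $j'$, so $\eps_{j^*} \to 0$ and
\[
d_{j'}(x) \le 2^{-\alpha j^*} \le 2^{-\alpha(j'-1)/[\delta(1-\eps_{j^*})]}.
\]
Plugging this bound into the liminf formula produces $h_f(x) \ge \alpha/\delta$.

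The argument is essentially bookkeeping: the only slightly delicate point is the self-referential vanishing factor $\eps_j$, and specifically having to check that the index $j^*$ in part (2) really diverges with $j'$ so that $\eps_{j^*} \to 0$ can be invoked. This is automatic since $j^* \gtrsim j'/\delta$, and I do not foresee any genuine obstacle beyond threading this $o(1)$ term through the two limits.
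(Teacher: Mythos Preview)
Your argument is correct and is precisely the intended one. The paper does not spell out a proof of this proposition (it refers to \cite{EV23} and calls it a ``direct consequence'' of Theorem~\ref{thm:waveletcharact}), but the route it indicates---compute $h_f(x)$ via the wavelet leader formula and compare the location of nonzero coefficients to $3\lambda_{j'}(x)$---is exactly what you carry out. The handling of the $\eps_j$ term in part~(2) is the only place requiring care, and your observation that every contributing index $j$ satisfies $j\ge j'$ (hence $\eps_j\to 0$ uniformly over the relevant indices as $j'\to\infty$) is the right way to close it; formally one fixes $\eta>0$, takes $j'$ large enough that $|\eps_j|<\eta$ for all $j\ge j'$, obtains $d_{j'}(x)\le 2^{-\alpha(j'-1)/[\delta(1+\eta)]}$, and lets $\eta\to 0$ at the end.
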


As a consequence, the study of the increasing multifractal spectrum of a lacunary wavelet series, which is a random $\alpha$-sparse signal, can be obtained by the computation of the Hausdorff dimension of some limsup of random balls.

When in addition, one is able to construct a positive measure on the sets $E_{\delta}(f)$, it makes possible to obtain the dimension of the iso-h\"older sets and hence compute the spectrum of singularities. It has been done for lacunary wavelet series on $[0,1]$ by Jaffard in \cite{Jaffard:00}.
\begin{theorem}\cite{Jaffard:00}
    Let $f$ be a LWS on $[0,1]$ of parameters $\alpha>0$ and $\eta \in(0,1)$. Almost surely, one has 
    $$
    \mathcal{D}_{f}(h) = \begin{cases}  \frac{\eta}{\alpha}h & \text{ if } h \in \big[\alpha, \frac{\alpha}{\eta}\big], \\[1ex]
    - \infty & \text{ otherwise. }\end{cases}
    $$
\end{theorem}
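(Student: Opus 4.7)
\medskip
\noindent\textbf{Proof plan.}
I would use Proposition~\ref{prop:Edelta} to reduce the computation of the increasing multifractal spectrum to estimating the Hausdorff dimension of the random limsup sets $E_\delta(f)$ for $\delta = \alpha/h \in [\eta/\dim_{\mathcal{H}}(\I),\,1]$, and then to establish almost surely the equality $\dim_{\mathcal{H}} E_\delta(f) = \eta/\delta$. The inclusion $E_\delta(f) \subset \{x : h_f(x) \leq h\}$ supplied by the proposition gives the lower bound on the spectrum, while $\{x:h_f(x)\leq h\} \subset \bigcap_n E_{\alpha/(h+1/n)}(f)$ combined with the monotonicity of $\dim_{\mathcal{H}}$ in $\delta$ yields the matching upper bound.

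For the upper bound on $\dim_{\mathcal{H}} E_\delta(f)$, I would use a classical covering argument. Writing $d = \dim_{\mathcal{H}}(\I)$ and $F_j = \{k \in \I_j : \xi_{j,k}=1\}$, a Chernoff estimate combined with Borel--Cantelli gives $\#F_j \leq 2^{(\eta+\varepsilon)j}$ almost surely for all large $j$, since $\mathbb{E}[\#F_j] = \#\I_j \cdot 2^{(\eta-d)j} \leq 2^{(\eta+\varepsilon)j}$ by the equality $\dim_{u-box}(\I) = d$. Covering $E_\delta(f)$ by the natural tail union of balls of radius $2^{-\delta(1-\varepsilon_j)j}$ then produces $\sum_j \#F_j \cdot 2^{-s\delta j} < \infty$ whenever $s > (\eta+\varepsilon)/\delta$, so that $\mathcal{H}^s(E_\delta(f))=0$ and $\dim_{\mathcal{H}} E_\delta(f) \leq \eta/\delta$ after letting $\varepsilon \to 0$.

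The bulk of the work is the lower bound, where Theorem~\ref{theo:Keps} is essential. Fix $\varepsilon > 0$ small and $b \in (0,1)$, and pick an integer $m$ so that $A := (1+b)^m$ is much larger than $d/\eta$; then work along the sparse sequence of scales $J_\ell := \lfloor (1+b)^{m\ell} J \rfloor$. I would inductively build a collection $\mathcal{L}_\ell$ of dyadic intervals at scale $J_\ell$ by starting with any $\lambda_0 \in \I_{J_0}$ meeting $\K_\varepsilon(b)$, and declaring the children of $\lambda \in \mathcal{L}_\ell$ to be those sub-intervals $\lambda' \in \I_{J_{\ell+1}}$ with $\lambda' \subset \lambda$, $\lambda' \cap \K_\varepsilon(b) \neq \emptyset$, and $\xi_{J_{\ell+1}, k_{\lambda'}} = 1$. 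Part~3 of Theorem~\ref{theo:Keps} provides at least $2^{(J_{\ell+1}-J_\ell)(d-5\varepsilon/b)}$ candidates per parent, each marked independently with probability $2^{(\eta-d)J_{\ell+1}}$, so the expected child count is at least $2^{(\eta-5\varepsilon/b)J_{\ell+1} - (d-5\varepsilon/b)J_\ell}$, which is exponentially large in $J_\ell$ once $A > (d-5\varepsilon/b)/(\eta-5\varepsilon/b)$.

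The set $\tilde{\K} := \bigcap_\ell \bigcup_{\lambda \in \mathcal{L}_\ell} B(k_\lambda 2^{-J_\ell},\, 2^{-\delta J_\ell})$ then lies in $E_\delta(f)$ by construction. To turn the expected child counts into almost-sure bounds, I would apply Chernoff to each parent and union-bound over the at most $2^{J_\ell}$ parents at level $\ell$: since the failure probability decays doubly-exponentially in $J_\ell$ while the union bound grows only exponentially, Borel--Cantelli guarantees that almost surely every parent at every sufficiently high level has at least half its expected number of children. Distributing mass uniformly among children at every level, a Frostman-type estimate yields $\dim_{\mathcal{H}} \tilde{\K} \geq [A(\eta-5\varepsilon/b) - (d-5\varepsilon/b)]/[\delta(A-1)]$; letting $m \to \infty$ produces the bound $(\eta-5\varepsilon/b)/\delta$, and letting $\varepsilon \to 0$ delivers $\eta/\delta$. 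The main obstacle I anticipate is the Frostman verification for balls of radius $r$ intermediate between $2^{-\delta J_{\ell+1}}$ and $2^{-\delta J_\ell}$: because $\delta < 1$ these balls are larger than the defining dyadic intervals and can overlap across different branches of the construction, so the uniform estimate $\mu(B(x,r)) \leq C r^s$ has to be obtained by interpolating between consecutive scales rather than by simply counting the descendants of a single ancestor.
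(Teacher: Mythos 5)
There is a genuine gap: the statement you are asked to prove is Jaffard's \emph{full} multifractal spectrum, $\mathcal{D}_f(h)=\dim_{\mathcal{H}}\{x: h_f(x)=h\}$, together with the fact that this set is empty (dimension $-\infty$) for $h\notin[\alpha,\alpha/\eta]$, whereas your plan only computes the increasing spectrum $\dim_{\mathcal{H}}\{x:h_f(x)\le h\}$. Passing from the increasing spectrum to the exact one is not a formality. Writing $\{x: h_f(x)=\alpha/\delta\}=\bigcap_{\delta'<\delta}E_{\delta'}(f)\setminus\bigcup_{\delta'>\delta}E_{\delta'}(f)$, the removed set $\bigcup_{\delta'>\delta}E_{\delta'}(f)$ has dimension $\sup_{\delta'>\delta}\eta/\delta'$, which is arbitrarily close to $\eta/\delta$; so knowing $\dim_{\mathcal{H}}E_{\delta}(f)=\eta/\delta$ does not bound the dimension of the difference from below. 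One needs the measure $\mu$ built on the Cantor subset of $E_\delta(f)$ and a verification that $\mu(E_{\delta'}(f))=0$ for every $\delta'>\delta$ (via the Frostman estimate against efficient coverings of $E_{\delta'}(f)$) --- this is precisely the ``positive measure on $E_\delta(f)$'' ingredient the paper singles out in its Conclusion as the reason its own method yields only the increasing spectrum in general, and it is the heart of Jaffard's proof for $[0,1]$. Your proposal never addresses it, nor the $-\infty$ part: the a.s. lower bound $h_f(x)\ge\alpha$ for all $x$ (easy, from $|c_{j,k}|\le 2^{-\alpha j}$ and the wavelet-leader characterization, but it must be said), and, more seriously, the a.s. \emph{uniform} upper bound $h_f(x)\le\alpha/\eta$ for every $x\in[0,1]$, which requires showing that almost surely every point is within $\sim 2^{-(\eta-\varepsilon)j}$ of a nonzero coefficient at infinitely many scales $j$ --- a separate Borel--Cantelli argument over all dyadic intervals, not just over a Cantor subset.

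Two secondary remarks. First, for $\I=[0,1]$ the quasi-Cantor preconditioning of Theorem \ref{theo:Keps} is unnecessary (the interval is Ahlfors regular, so \eqref{eqGMT} holds and the classical MTP applies directly); using it is not wrong, only heavier than needed. Second, the ``main obstacle'' you flag at the end --- the Frostman estimate for balls of intermediate radius, given that your random selection is imposed only at the sparse scales $J_\ell$ --- is exactly the delicate point, and your one-line fix (``interpolating between consecutive scales'') is where the real work lies: with only a lower bound on the number of children per parent, the selected children could cluster inside a single intermediate-scale interval and destroy the estimate $\mu(B(x,r))\le Cr^s$. The paper's proof of the analogous lower bound (Theorem \ref{theo:LWSlowbound}) resolves this by prescribing the exact number of retained intervals at \emph{every} intermediate scale $(1+b_n)^p$ of the quasi-Cantor structure, and then checking the measure of an arbitrary ball by comparison with the two nearest construction scales; your sketch would need this (or an equivalent equidistribution argument) spelled out before the mass distribution principle can be invoked.
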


Using the more general MTP of \cite{Beresn:06}, the result can be easily extended to symmetric Cantor sets, see \cite{EV23}. We denote by $\C(r)$ the symmetric Cantor set with a dissection ratio  $r<\frac{1}{2}$, having a Hausdorff dimension given by
$$\dim_{\cal H}\, \C(r) = - \frac{1 }{\log_2 r}.$$

\begin{proposition}\label{thm:LWS}\cite{EV23}
 Let $f$ be a LWS on  $\C(r)$ of parameters $\alpha>0$ and $\eta \in(0,\dim_{\cal H} \mathcal{C}(r))$.   Almost surely, one has
  $$
  \mathcal{D}_{f}(h)  =\begin{cases}
    \frac{ \eta}{\alpha}h & \text{ if } h \in [\alpha, \frac{\alpha\dim_{\cal H}\, \C(r) }{\eta}],
    \\[1.5ex]
    1 & \text{ if } h=r(\psi), \\[1ex]
  - \infty & \text{ otherwise. }\end{cases}
  $$
\end{proposition}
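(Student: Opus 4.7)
The plan is to split the analysis into what happens on $\mathcal{C}(r)$ versus on its complement, then use Proposition \ref{prop:Edelta} together with the Mass Transference Principle, which is applicable here because the symmetric Cantor set $\mathcal{C}(r)$ is Ahlfors regular of dimension $d := \dim_{\mathcal{H}}\mathcal{C}(r) = -1/\log_2 r$.

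For any $x \in [0,1] \setminus \mathcal{C}(r)$, only finitely many wavelets $\psi_{j,k}$ with $k \in \mathcal{I}_j$ can reach a sufficiently small neighborhood of $x$: at scales $j$ large relative to $\mathrm{dist}(x, \mathcal{C}(r))$, every $k \in \mathcal{I}_j$ sits at distance at least $L 2^{-j}$ from $x$, where $L$ is the support radius of $\psi$. Locally $f$ is thus a finite sum of $C^{r(\psi)}$ functions, so $h_f(x) = r(\psi)$, which immediately produces the full-dimensional piece $\mathcal{D}_f(r(\psi)) = 1$ and rules out the complement from contributing at any other value.

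For $x \in \mathcal{C}(r)$, Proposition \ref{prop:Edelta} gives $\{h_f \leq h\} \cap \mathcal{C}(r) \subseteq E_{\alpha/h}(f)$ for every $h \geq \alpha$. Concentration of the Bernoulli sum shows $|F_j| \sim 2^{\eta j}$ almost surely (since $|\mathcal{I}_j| \sim 2^{dj}$ and each index is retained with probability $2^{(\eta-d)j}$), and a direct first-moment covering computation then yields $\dim E_\delta(f) \leq \eta/\delta$. This already gives the upper bound $\dim\{h_f \leq h\} \leq \eta h/\alpha$ for $h \in [\alpha, \alpha d/\eta]$. For the matching lower bound, I apply Theorem \ref{MTP} to $X = \mathcal{C}(r)$ with $s = d$, taking as base balls $B_n = B(k_n 2^{-j_n}, 2^{-\eta j_n/d})$ indexed by $k_n \in F_{j_n}$: at scale $j$ the total $\mathcal{H}^d$-mass of these balls is $|F_j| \cdot 2^{-\eta j} \sim 1$, and a second moment estimate combined with Kolmogorov's zero-one law should give $\mathcal{H}^d(\mathcal{C}(r) \cap \limsup_n B_n) = \mathcal{H}^d(\mathcal{C}(r))$ almost surely. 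The MTP applied with exponent $\delta' = d\delta/\eta \geq 1$, which is valid precisely when $\delta \geq \eta/d$, i.e., $h \leq \alpha d/\eta$, then contracts the radii to $2^{-\delta j}$ and delivers $\dim(\mathcal{C}(r) \cap E_\delta(f)) \geq \eta/\delta = \eta h/\alpha$. Outside the range $[\alpha, \alpha d/\eta] \cup \{r(\psi)\}$ the iso-Hölder set is empty: the bound $|c_{j,k}| \leq 2^{-\alpha j}$ forces $h_f \geq \alpha$ on $\mathcal{C}(r)$, while the same MTP-based density argument shows that almost surely every point of $\mathcal{C}(r)$ is within distance $2^{-\eta j/d}$ of an active coefficient for infinitely many $j$, capping $h_f$ on $\mathcal{C}(r)$ at $\alpha d/\eta$.

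The main obstacle is upgrading the bound on the increasing spectrum to the genuine spectrum $\mathcal{D}_f$, i.e., producing, for each $h \in [\alpha, \alpha d/\eta]$, a subset of $\mathcal{C}(r)$ of dimension $\eta h/\alpha$ on which $h_f$ is exactly $h$, not merely at most $h$. For this I would construct a generalized Cantor set sitting inside $E_{\alpha/h}(f) \setminus \bigcup_{h' < h} E_{\alpha/h'}(f)$, together with a Frostman measure of dimension $\eta/\delta$ supported on it, so that the mass distribution principle combined with the two-sided wavelet leader control from Theorem \ref{thm:waveletcharact} jointly pin down $h_f(x) = h$ on a set of the required dimension.
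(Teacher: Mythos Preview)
The paper does not actually prove this proposition: it is stated as a result from \cite{EV23}, with only the one-sentence remark preceding it that ``Using the more general MTP of \cite{Beresn:06}, the result can be easily extended to symmetric Cantor sets, see \cite{EV23}.'' There is therefore no proof in the present paper to compare your proposal against.

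That said, your outline matches the route the paper points to. The symmetric Cantor set $\mathcal{C}(r)$ is Ahlfors $d$-regular, so Theorem~\ref{MTP} applies directly, and together with Proposition~\ref{prop:Edelta} this handles the lower bound for the increasing spectrum exactly as you describe. Your treatment of points outside $\mathcal{C}(r)$ (finite sum of smooth wavelets, hence regularity $r(\psi)$) and of the upper bound via $\rho_f(\alpha)\le\eta$ is the standard one, consistent with the argument given later in Proposition~\ref{prop:dimisoholder}. One small technical point: the dyadic centers $k2^{-j}$ with $k\in\mathcal{I}_j$ need not lie in $\mathcal{C}(r)$, only within $2^{-j}$ of it, so to invoke Theorem~\ref{MTP} as stated you should recenter the balls at nearby points of $\mathcal{C}(r)$; this is harmless for the limsup.

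You correctly identify the genuine gap: passing from the increasing spectrum to the exact spectrum $\mathcal{D}_f$ requires showing that for each $h$ there is a set of dimension $\eta h/\alpha$ where $h_f$ equals $h$, not merely is at most $h$. Your plan (build a Cantor subset of $E_{\alpha/h}(f)\setminus\bigcup_{\delta'>\alpha/h}E_{\delta'}(f)$ carrying a Frostman measure of the right exponent) is the standard one, going back to \cite{Jaffard:00b}, and is presumably what \cite{EV23} carries out; but it is a substantial construction, not a formality, and your proposal stops short of executing it.
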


\section{Sets with good rate of duplication }\label{sec:goodrate}

In this section, we recall two results from \cite{EV23} that provide an estimation of the number of dyadic intervals that duplicate at the expected rate at the next scale, with respect to the dimension of the fractal set. Since these results will serve as the foundation for the iterative procedure used to construct the quasi-Cantor set $\K_{\eps}(b)$ in Theorem \ref{theo:Keps} in the next section, we have included the proofs of these results in the Appendix. First, let us introduce some notations.

\medskip

\noindent {\bf Notations :} 
\begin{itemize}
    \item In what follows, we assume that $\I$ is a compact  subset of $[0,1]$ such that
$$
 \dim_{\mathcal{H}}  \I = \dim_{\text{u-box}} \I >0. 
$$We denote this common dimension by $H$. 
    \item Let
$$
\I_j = \{ \lambda= \left[ 2^{-j}k, 2^{-j}(k+1) \right] :  {\lambda} \cap \I \neq \emptyset \}.
$$
be the collection of dyadic intervals of size $2^{-j}$ that intersect the fractal set $\I$.
\item For a collection $U = U(J)$ of dyadic intervals at a scale $J$, we denote the collection of all their children at the scale $(1+\beta)J$ by
$$
C_{\beta}U := \{ \lambda' \in \I_{(1+\beta)J} : \lambda' \subset \lambda \text{ with } \lambda \in U \}. 
$$
\item We also denote by $\widetilde{U}$ the points $x \in [0,1]$ in the union of the dyadic intervals which belong to $U$. 
\end{itemize}

\begin{remark}\label{rem:partieentiere} Obviously, the scale $(1+\beta)J$ has no reason to be an integer and so a scale. We should consider instead its entire part. This is what is done in the following of the current section but it does not play a crucial role. In order to ease the reading of Section \ref{sec:Keps} with the construction of the set $\Ke(b)$, we will forget the entire parts of the scales in the remaining part of the paper.
\end{remark}

Let us fix $\varepsilon >0$.

\begin{lemma}\label{lem:estimI_j}
For every $\varepsilon>0$, there exists $J \in \N$ such that for all $j \ge J$, one has
$$
2^{j(H-\varepsilon)} \le \# \I_j \le 2^{j(H+\varepsilon)}.
$$
\end{lemma}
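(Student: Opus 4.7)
The strategy is to handle the two inequalities separately, using for the upper bound the definition of upper-box dimension and for the lower bound the definition of Hausdorff dimension. No new ideas beyond unwrapping the definitions are needed; the only mild care is to relate the counting function $\#\I_j$, which is restricted to the dyadic grid, with the standard quantities $N(2^{-j})$ and $\mathcal{H}^s_\delta(\I)$.

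For the upper bound, I would start from the upper-box definition
$$\limsup_{\delta\to 0}\frac{\log N(\delta)}{-\log\delta}=H.$$
Any covering of $\I$ by intervals of length $2^{-j}$ can be thickened into a covering by dyadic intervals at scale $2^{-j}$ by replacing each interval with the (at most two) dyadic intervals of $\I_j$ that it meets, which yields $\#\I_j\le 2\,N(2^{-j})$. Dividing by $j\log 2$ and taking $\limsup$ gives $\limsup_j \frac{\log\#\I_j}{j\log 2}\le H$, so for $j$ large enough $\#\I_j\le 2^{j(H+\varepsilon)}$.

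For the lower bound, I would use the family $\I_j$ as a candidate covering of $\I$ by sets of diameter $2^{-j}$. Since each $\lambda\in\I_j$ has diameter $2^{-j}<2^{-j+1}$, the definition of the $\delta$-Hausdorff pre-measure gives, for any $s\ge 0$,
$$\mathcal{H}^s_{2^{-j+1}}(\I)\le \sum_{\lambda\in\I_j}\mathrm{diam}(\lambda)^s=\#\I_j\cdot 2^{-js}.$$
Applied with $s=H-\varepsilon$, the fact that $s<H=\dim_{\mathcal H}(\I)$ implies $\mathcal{H}^{H-\varepsilon}(\I)=+\infty$, so $\mathcal{H}^{H-\varepsilon}_{2^{-j+1}}(\I)\to +\infty$ as $j\to\infty$. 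In particular, for $j$ large enough, $\mathcal{H}^{H-\varepsilon}_{2^{-j+1}}(\I)\ge 1$, which yields $\#\I_j\ge 2^{j(H-\varepsilon)}$.

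The main (minor) obstacle is the book-keeping around the fact that $\mathcal{H}^s_\delta$ is defined with strict diameter inequality and that one needs $s$ strictly less than $H$ to obtain $\mathcal{H}^s(\I)=+\infty$, which is what lets the unspecified multiplicative constant be absorbed into $2^{j\varepsilon}$. Choosing $J$ as the maximum of the two thresholds produced above gives a single integer that works for both inequalities simultaneously, completing the proof.
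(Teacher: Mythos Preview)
Your proposal is correct and follows essentially the same approach as the paper: the upper bound comes from the definition of upper-box dimension (with the harmless comparison $\#\I_j\le 2\,N(2^{-j})$), and the lower bound uses the dyadic family $\I_j$ as a cover to bound the Hausdorff pre-measure, which is exactly the paper's argument phrased directly rather than by contradiction.
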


We now introduce different collections of intervals $\lambda$ of scale $J$ by considering the number of their children at scale $\lfloor (1+\beta)J \rfloor$  for a fixed $\beta>0$ and up to some $\varepsilon>0$.

\begin{definition} Let $\varepsilon >0$, $\beta >0$ and $j \in \N$. We set $m=\max(1,\beta)$.
\begin{enumerate}
\item We say that $\lambda \in \I_j$ has a {\it{slow duplication rate}} if
\begin{equation}\label{Eq:SDR}
 \# \{ \lambda' \subset \lambda : \, \lambda' \in \I_{\lfloor (1+ \beta) j \rfloor} \}\le 2^{j (\beta H- 4 m\varepsilon)} 
\end{equation}
and we denote by $SD(j,\beta,\eps)$ the collection of such intervals.

\item we say that $\lambda \in \I_j$ has a {\it{normal duplication rate}} if
\begin{equation}\label{Eq:NDR}
2^{j(\beta H -4 m\varepsilon)} \le  \# \{ \lambda' \subset \lambda \cap \I_{\lfloor (1+\beta)j \rfloor} \} \le  2^{j(\beta H +4 m \varepsilon)}.
\end{equation}
and we denote by $ND(j,\beta,\eps)$ the collection of such intervals.
\item We say that $\lambda \in \I_j$ has a {\it{fast duplication rate}} if
\begin{equation}\label{Eq:FDR}
 \# \{ \lambda' \subset \lambda : \, \lambda' \in \I_{\lfloor (1+ \beta) j \rfloor} \}\ge 2^{j (\beta H + 4 m  \varepsilon)} 
\end{equation}
and we denote by $FD(j,\beta,\eps)$ the collection of such intervals.
\end{enumerate}
\end{definition}

In \cite{EV23}, we obtained the following lower and upper bound of the cardinalities of each sets.

\begin{proposition}\cite{EV23}\label{prop:CardSNF}
Let $\varepsilon>0$, $\beta>0$ and $m=\max(1,\beta)$. There exists $J_0 \in \N$ such that for all $j \ge J_0$, one has
\begin{equation}\label{Eq:CardND}
2^{j(H-2\eps)} \le \# ND(j,\beta,\eps) \le 2^{j(H+\eps)}
\end{equation}
\begin{equation}\label{Eq:CardFD}
\# FD(j,\beta,\eps) \le 2^{j(H-2\eps)}
\end{equation}
and
\begin{equation}\label{Eq:CardSD}
\# C_{\beta}SD(j,\beta,\eps) \le 2^{j(1+\beta)(H-3m\eps)}
\end{equation}
\end{proposition}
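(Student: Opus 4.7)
The plan is to establish the bounds in the order: upper bound on $\#ND$, upper bound on $\#FD$, upper bound on $\#C_\beta SD$, and finally the lower bound on $\#ND$, invoking Lemma \ref{lem:estimI_j} at each step with an auxiliary parameter $\varepsilon'>0$ chosen small in terms of $\varepsilon$, $\beta$ and $m=\max(1,\beta)$ so that all four estimates hold for $j$ beyond a common threshold $J_0$.

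The upper bound on $\#ND$ is immediate, since $ND\subset\I_j$ and Lemma \ref{lem:estimI_j} gives $\#\I_j\le 2^{j(H+\varepsilon)}$ for $j$ large. For $\#FD$ I would use a double-counting argument: every $\lambda\in FD$ contributes at least $2^{j(\beta H + 4m\varepsilon)}$ elements of $\I_{(1+\beta)j}$, hence
$$\#FD\cdot 2^{j(\beta H + 4m\varepsilon)} \;\le\; \#\I_{(1+\beta)j} \;\le\; 2^{(1+\beta)j(H+\varepsilon')}.$$
Rearranging gives $\#FD\le 2^{j(H+(1+\beta)\varepsilon'-4m\varepsilon)}$, and choosing $\varepsilon'\le (4m-2)\varepsilon/(1+\beta)$ — positive since $m\ge 1$ — produces $\#FD\le 2^{j(H-2\varepsilon)}$.

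For $\#C_\beta SD$ I would combine the per-interval estimate of at most $2^{j(\beta H - 4m\varepsilon)}$ children (the defining property of SD) with $\#SD\le\#\I_j\le 2^{j(H+\varepsilon')}$, yielding
$$\#C_\beta SD \;\le\; \#SD\cdot 2^{j(\beta H - 4m\varepsilon)} \;\le\; 2^{j((1+\beta)H + \varepsilon' - 4m\varepsilon)}.$$
A case analysis on $m=\max(1,\beta)$ together with $\varepsilon'$ chosen small compared to $m\varepsilon$ (and to $(1+\beta)$) then yields the claimed bound $\le 2^{(1+\beta)j(H-3m\varepsilon)}$.

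Finally, for the lower bound on $\#ND$, I would exploit the exact decomposition $\#\I_{(1+\beta)j}=\#C_\beta SD+\#C_\beta ND+\#C_\beta FD$ together with the lower box-counting bound $\#\I_{(1+\beta)j}\ge 2^{(1+\beta)j(H-\varepsilon')}$. Subtracting the already established upper bounds on $\#C_\beta SD$ and on $\#C_\beta FD\le \#FD\cdot 2^{\beta j}$, the main term survives so that $\#C_\beta ND\ge \tfrac12\cdot 2^{(1+\beta)j(H-\varepsilon')}$ for $j$ large; dividing by the per-ND-interval maximum $2^{j(\beta H + 4m\varepsilon)}$ then delivers $\#ND\ge 2^{j(H-2\varepsilon)}$. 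The main obstacle I anticipate is ensuring a single $\varepsilon'$ and threshold $J_0$ making all four inequalities compatible: in particular, the FD-term $2^{j(H-2\varepsilon+\beta)}$ must be dominated by the main term $2^{(1+\beta)j(H-\varepsilon')}$, which is a delicate constraint when $H<1$ and $\beta$ is not small and is precisely what forces the factor $4m\varepsilon$ (rather than just $\varepsilon$) in the definitions of $SD$, $ND$, $FD$ together with the splitting $m=\max(1,\beta)$.
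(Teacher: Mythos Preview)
Your treatment of the three upper bounds matches the paper's. The gap is in the lower bound on $\#ND$: the obstacle you name at the end is fatal, not merely delicate, and the $4m\varepsilon$ factor does not cure it. You need the $FD$-contribution $\#C_\beta FD\le \#FD\cdot 2^{\beta j}\le 2^{j(H-2\varepsilon+\beta)}$ to be negligible against $2^{(1+\beta)j(H-\varepsilon')}$, which unwinds to $\beta(1-H)+(1+\beta)\varepsilon'<2\varepsilon$. When $H<1$ the left side is at least the fixed positive constant $\beta(1-H)$, so the inequality fails for every $\varepsilon<\tfrac12\beta(1-H)$, regardless of $\varepsilon'$. The factor $4m\varepsilon$ in the definitions only controls the \emph{minimum} number of children of an $FD$-interval and hence sharpens the bound on $\#FD$; it says nothing about the \emph{maximum} number of children a single $FD$-interval can have, which can be the full $2^{\beta j}$. (There is a secondary issue too: even granting $\#C_\beta ND\ge \tfrac12\cdot 2^{(1+\beta)j(H-\varepsilon')}$, dividing by $2^{j(\beta H+4m\varepsilon)}$ yields only $\#ND\ge 2^{j(H-(1+\beta)\varepsilon'-4m\varepsilon)}$, strictly weaker than $2^{j(H-2\varepsilon)}$ since $4m\varepsilon>2\varepsilon$.)

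The paper abandons single-scale counting for this step and argues by contradiction via the Hausdorff dimension. Assuming $\#ND(j_n,\beta,\varepsilon)\le 2^{j_n(H-2\varepsilon)}$ along a subsequence, one covers $\I$ by the intervals of $ND(j_n,\beta,\varepsilon)\cup FD(j_n,\beta,\varepsilon)$ at scale $2^{-j_n}$ \emph{together with} the intervals of $C_\beta SD(j_n,\beta,\varepsilon)$ at the finer scale $2^{-(1+\beta)j_n}$. Because Hausdorff pre-measures accept covers by sets of mixed diameters, this gives $\mathcal H^{H-\varepsilon}(\I)=0$, contradicting $\dim_{\mathcal H}\I=H$. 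The essential asymmetry --- $FD$-intervals are already few at scale $j$ and need not be subdivided, while $SD$-intervals may be numerous at scale $j$ but have few children and so must be subdivided --- is exactly what a pure child-counting argument at a single scale cannot exploit.
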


\section{Quasi-Cantor subsets of $\I$}\label{sec:Keps}

The aim of this section is to prove Theorem \ref{theo:Keps}. The results of Section \ref{sec:goodrate} do not establish the existence of sets that duplicate at the ``good rate'' at all scales, as is the case for a Cantor set. Indeed, the children of such sets may exhibit either slow or fast duplication. Here, we propose an iterative procedure to retain only the sets that exhibit an ``infinite good rate'' of duplication.

We now consider a positive number $b \in (0,1)$ and a sufficiently small $\varepsilon > 0$ such that  
$$  
bH - 5\varepsilon > 0.  
$$  
We introduce sub-collections of $ND(J, b, \varepsilon)$, $J \in \N$, in order to accurately analyze if the rate of duplication is preserved through the scales for an expected number of dyadic intervals. Note that $\max(1,b)=1$.
We fix $J_0$ large enough such that the estimations of the cardinality of $FD(J,b,\eps)$, $ND(J,b,\eps)$ and $C_{b}SD(J,b,\eps)$ given in Proposition \ref{prop:CardSNF} hold for every $J \geq J_0$. 
We  now define a succession of nested sets. As mentioned in Remark \ref{rem:partieentiere}, we will deliberately omit the minor adjustments required to account for the fact that the scales $(1+b)^\ell J$ should be considered in their integer part. For all $J \ge J_0$, we set
$$
T_1 (J, b, \eps) = ND(J,b,\eps), 
$$
and
$$
T_2(J,b,\eps) := \{ \lambda \in T_1 (J, b, \eps) : \, \# \{ \lambda' \subset \lambda : \, \lambda' \in T_1((1+b)J,b,\eps)\} \ge 2^{J(bH-5\eps)} \}.
$$
Next, we define recursively for $\ell \geq 3$,
$$
T_\ell(J,b,\eps) :=\{ \lambda \in T_{k-1}(J,b,\eps) :  \, \# \{ \lambda' \subset \lambda :  \lambda' \in T_\ell((1+b)J,b,\eps)\} \ge 2^{J(bH-5  \eps)}\}.
$$
We also define the complementary sets
$$
U_\ell(J,b,\eps) = T_{\ell-1}(J,b,\eps) \setminus T_\ell(J,b,\eps)
$$
for all $\ell \geq 2 $.
The following lemma provides an estimate of the maximal number of intervals required to cover the complementary sets.

\begin{lemma}\label{lem:U1bis}
For every $J \geq J_0$, the set $\widetilde{U}_2(J,b,\eps) \cap \I$ is covered by the union of
\begin{enumerate}
    \item at most $\# U_2(J,b,\eps) \times 2^{J(bH-5\eps)}$ intervals of length $2^{-(1+b)J}$,
    \item intervals of $FD((1+b)J,b,\eps)$
    \item intervals of $C_{b}SD((1+b)J,b,\eps)$.
\end{enumerate}

\end{lemma}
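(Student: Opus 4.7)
The plan is to exploit the trichotomy of $\I_{(1+b)J}$ into normal, slow, and fast duplication intervals, combined with the defining property of $U_2(J,b,\eps)$ as intervals of $ND(J,b,\eps)$ that fail to have many children in $ND((1+b)J,b,\eps)$.

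First, I would note that for any $\lambda \in U_2(J,b,\eps) \subset \I_J$, the children of $\lambda$ lying in $\I_{(1+b)J}$ form a cover of $\lambda \cap \I$, and hence the union over $\lambda \in U_2$ covers $\widetilde{U}_2(J,b,\eps) \cap \I$. It therefore suffices to distribute those children among three classes. Since the conditions defining $SD$, $ND$, and $FD$ at scale $(1+b)J$ are mutually exclusive and exhaustive, every child $\lambda' \in \I_{(1+b)J}$ of such a $\lambda$ falls into exactly one of $ND((1+b)J,b,\eps)$, $SD((1+b)J,b,\eps)$, or $FD((1+b)J,b,\eps)$.

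For the children in $ND((1+b)J,b,\eps) = T_1((1+b)J,b,\eps)$, I would invoke the definition of $U_2(J,b,\eps) = T_1(J,b,\eps) \setminus T_2(J,b,\eps)$: by construction, each $\lambda \in U_2$ has strictly fewer than $2^{J(bH-5\eps)}$ such children. Summing over $\lambda \in U_2(J,b,\eps)$ and recalling that these are dyadic intervals of length $2^{-(1+b)J}$ yields the bound in item (1). The children in $FD((1+b)J,b,\eps)$ are by definition contained in $FD((1+b)J,b,\eps)$, producing item (2) directly.

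The only remaining children are those $\lambda' \in SD((1+b)J,b,\eps)$. For these I would perform one additional dyadic refinement: the children $\lambda'' \subset \lambda'$ with $\lambda'' \in \I_{(1+b)^2 J}$ cover $\lambda' \cap \I$, and by the definition of the operator $C_b$, each such $\lambda''$ belongs precisely to $C_b SD((1+b)J,b,\eps)$. This yields item (3). The argument is essentially bookkeeping, with no serious technical obstacle; the only subtle point to watch is ensuring that the three duplication classes do form a partition of $\I_{(1+b)J}$ (with boundary conventions consistent with those of Proposition \ref{prop:CardSNF}), so that no child is left uncovered by the three items.
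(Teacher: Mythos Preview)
Your proposal is correct and follows essentially the same approach as the paper: decompose the children at scale $(1+b)J$ into the three duplication classes, bound the $ND$-children via the definition of $U_2$, keep the $FD$-children as is, and refine the $SD$-children once more into $C_b SD$. The only minor remark is that you need exhaustiveness of the three classes, not mutual exclusivity, and the definitions already guarantee that.
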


\begin{proof}
Let us cover the set $\widetilde{U}_2(J,b,\eps)\cap \I$ by considering the set of children of $U_2(J,b,\eps)$ at the scale $(1+b)J$ and by decomposing this set following the rate of duplication of the children at the scale $(1+b)^2J$. We obtain
$$
C_{b}U_2(J,b,\eps) 
\subset FD((1+b)J,b,\eps) \cup SD((1+b)J,b,\eps) \cup \left( ND((1+b)J,b,\eps) \cap C_{b} U_2(J,b,\eps)\right).    
$$
The set $\widetilde{SD}((1+b)J,b,\eps)$ can be covered by its children at the next generation, that is by the union of the intervals of $C_{b} SD((1+b)J,b,\eps)$. Finally, by definition of $U_2(J,b,\eps)$, any dyadic interval $\lambda \in U_2(J,b,\eps)$ has less than $2^{J(bH-5\eps)}$ children in $ND((1+b)J,b,\eps)$. It follows that $\widetilde{ND}((1+b)J,b,\eps) \cap \widetilde{C_{b} U_2}(J,b,\eps)$ is covered by at most $\# U_2(J,b,\eps) \times 2^{J(bJ-5\eps)} $ intervals of length $2^{-(1+b)J}.$ 
\end{proof}

The previous lemma is the first step towards obtaining a covering for any $\widetilde{U}_\ell(J,b,\varepsilon)$ with $\ell \geq 2$.

\begin{lemma}\label{lem:denombrU} Let $\ell \ge 2$. For every $J \geq J_0$, the set  $\widetilde{U}_\ell(J,b,\eps)\cap \I$ is covered by the union of 
\begin{enumerate}
\item at most $\# U_\ell (J,b,\eps) \times 2^{J(bH-5 \eps)}$ intervals of length $2^{-(1+b)J}$,
\item at most, for each $p \in \{2, \dots, \ell-1\} $ and each $q \in \{p+1, \dots, \ell\}$, $$\# U_{q-p+1}((1+b)^{p-1},b,\eps) \times 2^{(1+b)^{p-1}J(bH-5\eps)}$$ intervals of length $2^{-(1+b)^pJ}$, 
\item for each $p\in \{2, \dots, \ell \}$, intervals of $FD((1+b)^{p-1}J,b,\eps)$ and of $C_{b} SD((1+b)^{p-1}J,b,\eps)$.
\end{enumerate}
\end{lemma}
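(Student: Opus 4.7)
My approach is induction on $\ell\geq 2$. The base case $\ell=2$ is exactly Lemma~\ref{lem:U1bis}, so I suppose the statement holds at every level between $2$ and some $\ell$, and prove it at level $\ell+1$. Fix $\lambda\in U_{\ell+1}(J,b,\eps)$: since $\lambda\in T_\ell(J,b,\eps)\setminus T_{\ell+1}(J,b,\eps)$, the number of children of $\lambda$ belonging to $T_\ell((1+b)J,b,\eps)$ is strictly less than $2^{J(bH-5\eps)}$.

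The key decomposition step is to partition the children of $\lambda$ intersecting $\I$ by means of the identity
$$
\I_{\lfloor(1+b)J\rfloor}=T_\ell((1+b)J,b,\eps)\,\sqcup\,\bigsqcup_{k=2}^{\ell}U_k((1+b)J,b,\eps)\,\sqcup\,\bigl(\I_{\lfloor(1+b)J\rfloor}\setminus ND((1+b)J,b,\eps)\bigr),
$$
the last block being contained in $FD((1+b)J,b,\eps)\cup SD((1+b)J,b,\eps)$. Children of $U_{\ell+1}(J,b,\eps)$ falling in $T_\ell((1+b)J,b,\eps)$ account for at most $\#U_{\ell+1}(J,b,\eps)\times 2^{J(bH-5\eps)}$ intervals of length $2^{-(1+b)J}$, which gives item~1. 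Children in $FD((1+b)J,b,\eps)$ are retained as such, and those in $SD((1+b)J,b,\eps)$ are replaced by their offspring in $C_{b}SD((1+b)J,b,\eps)$; together these furnish the $p=2$ contribution in item~3. The remaining children lie in some $U_k((1+b)J,b,\eps)$ with $k\in\{2,\dots,\ell\}$, so for each such $k$ I invoke the induction hypothesis to cover $\widetilde{U}_k((1+b)J,b,\eps)\cap\I$ (the overall estimate does not deteriorate, since the bounds quoted by the induction already use the full cardinality $\#U_k$).

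The final step is purely combinatorial bookkeeping. Applied at scale $(1+b)J$, the induction hypothesis produces coverings of $\widetilde{U}_k((1+b)J,b,\eps)\cap\I$ by: (a) at most $\#U_k((1+b)J,b,\eps)\times 2^{(1+b)J(bH-5\eps)}$ intervals of length $2^{-(1+b)^2J}$; (b) for each $p\in\{2,\dots,k-1\}$ and each $q\in\{p+1,\dots,k\}$, at most $\#U_{q-p+1}((1+b)^{p}J,b,\eps)\times 2^{(1+b)^{p}J(bH-5\eps)}$ intervals of length $2^{-(1+b)^{p+1}J}$; (c) for each $p\in\{2,\dots,k\}$, intervals of $FD((1+b)^{p}J,b,\eps)$ and of $C_{b}SD((1+b)^{p}J,b,\eps)$. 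Setting $p'=p+1$ and $q'=q+1$ in (b) and (c), identifying (a) with $p'=2$ and $q'=k+1$, and taking the union over $k\in\{2,\dots,\ell\}$, transforms these pieces exactly into items~2 and~3 of the target at level $\ell+1$, with $p'\in\{2,\dots,\ell\}$ and $q'\in\{p'+1,\dots,\ell+1\}$ in item~2 and $p'\in\{2,\dots,\ell+1\}$ in item~3. The main obstacle is verifying that the shifted index ranges line up with the target ranges; once this is checked, the inductive step is complete.
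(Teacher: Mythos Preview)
Your argument is correct and follows essentially the same route as the paper: induction on $\ell$ with base case Lemma~\ref{lem:U1bis}, the decomposition of $C_b U_{\ell+1}(J,b,\eps)$ into the $FD$/$SD$ pieces and the $ND$ piece, the further splitting of $ND((1+b)J,b,\eps)$ as $T_\ell((1+b)J,b,\eps)\sqcup\bigsqcup_{k=2}^{\ell}U_k((1+b)J,b,\eps)$, and the recursive appeal to the induction hypothesis on each $U_k((1+b)J,b,\eps)$. Your explicit index-shifting $p'=p+1$, $q'=q+1$ and the identification of item~1 at the shifted scale with $(p',q')=(2,k+1)$ make the bookkeeping more transparent than the paper's terse treatment; your remark that the bounds already use the full cardinalities $\#U_{q-p+1}$ is exactly what is needed so that the union over $k$ does not inflate the count for a given pair $(p',q')$.
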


\begin{proof}
Lemma \ref{lem:U1bis} gives the result for $\ell=2$ and for any $J \ge J_0$. Suppose that $\ell\geq 2$ and that the result is true for any $J \ge J_0$ and for all the sets $U_p(J,b,\eps)$ with $p \le \ell$. We want to obtain the result for $U_{\ell+1}(J,b,\eps)$. As done previously, we observe that 
$$ C_{b}U_{\ell+1}(J,b,\eps)
\subset FD((1+b)J,b,\eps) \cup SD((1+b)J,b,\eps) \cup \left(ND((1+b)J,b,\eps) \cap C_{b}U_{\ell+1}(J,b,\eps)\right).
$$
The intervals belonging to $SD((1+b)J,b,\eps) $ form a set that can be covered by the intervals of $C_{b}SD((1+b)J,b,\eps)$. 

 Moreover, since 
 $$ND((1+b)J,b,\eps) = T_{\ell}((1+b)J,b,\eps) \cup \bigcup_{p=3}^{\ell+1} U_{p-1}((1+b)J,b,\eps),$$ we get that
$$
 ND((1+b)J,b,\eps)  \cap   C_{b}U_3(J,b,\eps)  \subset  \left(T_\ell((1+b)J,b,\eps) \cap C_{b}U_{\ell+1}(J,b,\eps)\right) \cup \bigcup_{p=3}^{\ell+1} (U_{p-1}((1+b)J,b,\eps) .
$$
For all the terms appearing in the union,  we use our assumption at the scale $(1+b)J$ and with different generations $(1+b)^s, 2 \le s \le q+1$. 

Regarding the first term, if $\lambda$ belongs to $U_{\ell+1}(J,b,\eps)$, there are no more than $2^{J(bH-5 \eps)}$ of its children in $T_{\ell}((1+b)J,b,\eps)$. Hence the number of dyadic intervals in $T_{\ell}((1+b)J,b,\eps) \cap C_{b}U_{\ell+1}(J,b,\eps)$ is smaller than $\# U_{\ell+1}(J,b,\eps) \times 2^{J(bH-5 \eps)}$.
\end{proof}

Finally, we obtain the following proposition.

\begin{proposition}\label{prop:covS}
   For every $J \geq J_0$, we set 
    $$ 
    U_{\infty}(J,b,\eps):=\bigcup_{\ell=1}^{+ \infty} {U_\ell} (J,b,\eps).
    $$
    Then, the set $\widetilde{U_{\infty}}(J,b,\eps) \cap \I$ 
    is covered by  the union of 
    \begin{enumerate}
\item at most, for each $p \geq 1 $, 
$$\# ND((1+b)^{p-1}J,b,\eps) \times 2^{(1+b)^{p-1}J(bH-5\eps)}$$ intervals of length $2^{-(1+b)^pJ}$, 
\item for each $p\geq 2$, intervals of $FD((1+b)^{p-1}J,b,\eps)$,
\item for each $p \geq 2$, intervals of $C_{b} SD((1+b)^{p-1}J,b,\eps)$.
\end{enumerate}
\end{proposition}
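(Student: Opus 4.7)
The strategy is to take a union of the coverings supplied by Lemma \ref{lem:denombrU} over $\ell \geq 2$ and to repackage the resulting tallies using the key observation that, at every scale $(1+b)^{p-1}J$, the family $(U_r((1+b)^{p-1}J,b,\eps))_{r \geq 2}$ is pairwise disjoint with union contained in $T_1((1+b)^{p-1}J,b,\eps) = ND((1+b)^{p-1}J,b,\eps)$. Both properties are immediate from $U_r = T_{r-1} \setminus T_r$ and the nesting $T_r \subset T_{r-1} \subset \cdots \subset T_1$.

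I would start by writing
$$
\widetilde{U_\infty}(J,b,\eps) \cap \I \;\subset\; \bigcup_{\ell \geq 2} \bigl(\widetilde{U}_\ell(J,b,\eps) \cap \I\bigr)
$$
and apply Lemma \ref{lem:denombrU} to each summand. For the intervals of length $2^{-(1+b)J}$ (the case $p=1$ of item 1 of the proposition), item 1 of the lemma contributes at most $\#U_\ell(J,b,\eps)\cdot 2^{J(bH-5\eps)}$ for each $\ell \geq 2$; summing over $\ell$ and invoking the disjointness and containment above yields the bound $\#ND(J,b,\eps)\cdot 2^{J(bH-5\eps)}$, exactly as required.

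For $p \geq 2$, item 2 of Lemma \ref{lem:denombrU} produces, for each $\ell \geq p+1$ and each $q \in \{p+1,\dots,\ell\}$, at most $\#U_{q-p+1}((1+b)^{p-1}J,b,\eps)\cdot 2^{(1+b)^{p-1}J(bH-5\eps)}$ intervals of length $2^{-(1+b)^pJ}$. Setting $r = q-p+1$ and letting $\ell$ range over $\{p+1,p+2,\dots\}$, the index $r$ sweeps all integers $\geq 2$. Summing and applying the observation above at scale $(1+b)^{p-1}J$ yields the bound $\#ND((1+b)^{p-1}J,b,\eps)\cdot 2^{(1+b)^{p-1}J(bH-5\eps)}$ demanded in item 1 of the proposition. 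The contributions of item 3 of Lemma \ref{lem:denombrU}, namely the $FD((1+b)^{p-1}J,b,\eps)$ and $C_b SD((1+b)^{p-1}J,b,\eps)$ intervals for $p \geq 2$, pass directly to items 2 and 3 of the proposition after taking the union over $\ell$.

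The only genuine difficulty is combinatorial bookkeeping: one must check that the double index $(p,q)$ of Lemma \ref{lem:denombrU} sweeps the whole rectangle $\{(p,r) : p \geq 1,\, r \geq 2\}$ as $\ell$ varies, and that the disjointness and the containment $\bigcup_r U_r \subset ND$ survive at every scale so that the telescoping bound $\sum_r \#U_r \leq \#ND$ remains valid. Once this indexing is sorted out, nothing beyond a single pass of summations is required and no new estimate needs to be produced.
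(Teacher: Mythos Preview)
Your proposal is correct and follows essentially the same route as the paper: both rely on Lemma \ref{lem:denombrU}, the disjointness of the $U_r$'s inside $ND$, and the resulting bound $\sum_{r\ge 2}\#U_r \le \#ND$ at each scale $(1+b)^{p-1}J$. Your treatment of the $(p,q)$ bookkeeping is in fact more explicit than the paper's, which simply records the inequality $\sum_{q\ge p+1}\#U_{q-p+1}((1+b)^{p-1}J,b,\eps)\le \#ND((1+b)^{p-1}J,b,\eps)$ and invokes the lemma.
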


\begin{proof}
Let us notice that the sets $U_\ell(J,b,\eps)$, $\ell \ge 2$ are disjoint and included in the sets $ND(J,b,\eps)$. Hence
$$
\sum_{\ell=2}^{+ \infty} \# U_\ell(J,b,\eps)  \le \#ND(J,b,\eps).
$$
Similarly, for every $p \geq 2$, one has
$$
\sum_{q=p+1}^{+ \infty} \# U_{q-p+1}((1+b)^{p-1}J,b,\eps) \leq \# ND((1+b)^{p-1}J, b, \eps)
$$
We obtain then directly the conclusion by applying Lemma \ref{lem:denombrU}. 
\end{proof}

The covering obtained in the previous result allows us to estimate the Hausdorff dimension of intersection of sets defined by  $$  
T_{\infty}(J, b, \varepsilon) := \bigcap_{\ell \geq 1} T_\ell(J, b, \varepsilon)  
$$  
for $J \geq J_0$. Before estimating the dimension, we first show that this set (which will be proven to be non-empty in Theorem \ref{theo:Vinf}) possesses a nice (almost) self-similar structure.

\begin{proposition}\label{prop:bonnereproduction}
 Fix $\ell \geq 1$. Any dyadic interval $\lambda \in T_{\infty}(J,b,\eps)$ satisfies
$$
\# \big\{ \lambda' \subset \lambda : \lambda' \in T_{\infty}((1+b)^\ell J,b,\eps) \big\}\ge 2^{((1+b)^\ell -1)J( H-5 \frac{\eps}{b})}. 
$$
\end{proposition}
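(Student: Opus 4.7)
The plan is to proceed by induction on $\ell$, where the only delicate point is actually the base case $\ell=1$, which is essentially a statement about how $T_\infty$ inherits structure from the $T_p$'s at finite levels. The inductive step itself is just bookkeeping with geometric sums, since the bound exponent $((1+b)^\ell-1)J(H-5\eps/b)$ telescopes multiplicatively across scales $J,(1+b)J,(1+b)^2J,\dots,(1+b)^\ell J$.

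For the base case $\ell=1$, I would fix $\lambda\in T_\infty(J,b,\eps)$ and consider, for each integer $p\ge 1$, the finite set
\[
S_p:=\{\lambda'\subset\lambda : \lambda'\in T_p((1+b)J,b,\eps)\}.
\]
Since $\lambda\in T_\infty(J,b,\eps)\subset T_{p+1}(J,b,\eps)$ for every $p\ge 1$, the recursive definition of $T_{p+1}$ yields $\#S_p\ge 2^{J(bH-5\eps)}$. The family $(T_p((1+b)J,b,\eps))_p$ is decreasing in $p$, hence $(S_p)_p$ is a decreasing sequence of subsets of the finite set of children of $\lambda$ in $\I_{(1+b)J}$. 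Such a sequence stabilizes, so
\[
\bigcap_{p\ge 1}S_p\;=\;\{\lambda'\subset\lambda:\lambda'\in T_\infty((1+b)J,b,\eps)\}
\]
still has cardinality at least $2^{J(bH-5\eps)}$. Rewriting $J(bH-5\eps)=bJ(H-5\eps/b)=((1+b)-1)J(H-5\eps/b)$ gives the desired bound for $\ell=1$.

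For the induction, assume the statement at level $\ell$. Given $\lambda\in T_\infty(J,b,\eps)$, the induction hypothesis supplies at least $2^{((1+b)^\ell-1)J(H-5\eps/b)}$ descendants $\mu\subset\lambda$ with $\mu\in T_\infty((1+b)^\ell J,b,\eps)$. To each such $\mu$ I apply the base case at the scale $(1+b)^\ell J$: it has at least
\[
2^{(1+b)^\ell J\,(bH-5\eps)}\;=\;2^{b(1+b)^\ell J(H-5\eps/b)}
\]
children in $T_\infty((1+b)^{\ell+1}J,b,\eps)$. The children of distinct $\mu$'s are disjoint, so the total count at scale $(1+b)^{\ell+1}J$ is at least
\[
2^{\bigl[((1+b)^\ell-1)+b(1+b)^\ell\bigr]J(H-5\eps/b)}=2^{((1+b)^{\ell+1}-1)J(H-5\eps/b)},
\]
using $(1+b)^\ell-1+b(1+b)^\ell=(1+b)^{\ell+1}-1$.

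The main (and really only) conceptual obstacle is the passage from the uniform lower bound on $\#S_p$ to the same lower bound on $\#\bigcap_p S_p$. This is what makes the definition of $T_\infty$ as a nested intersection useful: because the children of a fixed dyadic $\lambda$ form a finite set, a decreasing chain of subsets of prescribed minimum cardinality stabilizes, and the limit inherits that cardinality. Everything else is the geometric-sum identity $\sum_{k=0}^{\ell-1}(1+b)^k=\frac{(1+b)^\ell-1}{b}$, which is what converts the per-scale increment $J(bH-5\eps)$ into the claimed global exponent $((1+b)^\ell-1)J(H-5\eps/b)$.
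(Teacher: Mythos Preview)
Your proof is correct and follows essentially the same approach as the paper: induction on $\ell$, with the base case hinging on the finiteness of the children of $\lambda$ so that the decreasing sequence $(S_p)_p$ stabilizes, and the inductive step being a straightforward multiplicative bookkeeping. The only cosmetic difference is that the paper argues the base case by contradiction (choosing $\ell_0$ larger than the last level at which any non-$T_\infty$ child survives), whereas you phrase the same finiteness observation directly as stabilization of a decreasing chain in a finite set; these are the same argument.
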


\begin{proof}
We proceed by induction. For the case $\ell = 1$, assume that  
$$  
\# \big\{ \lambda' \subset \lambda : \lambda' \in T_{\infty}((1+b)J,b,\varepsilon) \big\} < 2^{J(b H - 5 \varepsilon)}.  
$$  
Since $\lambda$ has a finite number of other children (i.e., those not in $T_{\infty}((1+b)J,b,\varepsilon)$), we can define  
$$  
\ell_0 > \max_{\lambda' \subset \lambda, \lambda' \notin T_{\infty}((1+b)J,b,\varepsilon)} \max \{ \ell \in \mathbb{N} : \lambda' \in T_\ell((1+b)J,b,\varepsilon) \}.  
$$  
This choice of $\ell_0$ ensures that there are strictly less than  
$2^{J(b H - 5 \varepsilon)}$ children of $\lambda$ belonging to the set $T_{\ell_0}((1+b)J,b,\varepsilon)$. This contradicts the fact that $\lambda$ belongs to $T_{\ell_0+1}(J,b,\varepsilon)$.  

Now, assume that 
$$
\# \big\{ \lambda' \subset \lambda : \lambda' \in T_{\infty}((1+b)^\ell J,b,\eps) \big\}\ge 2^{((1+b)^\ell -1)J( H-5 \frac{\eps}{b})}
$$
and let us prove the result for $\ell+1$. 
By the first case, each child $\lambda'$ of an interval $ \lambda$ that belongs to $T_{\infty}((1+b)^\ell J,b,\eps)$ gives at least $2^{(1+b)^\ell J(b H-5 \eps)}$ children at scale $(1+b)^{\ell+1}J$ belonging to $T_{\infty}((1+b)^{\ell+1}J,b,\eps)$. It follows that
\begin{align*}
\# \big\{ \lambda' \subset \lambda : \lambda' \in T_{\infty}((1+b)^{\ell+1}J,b,\eps) \big\} 
& \geq 2^{(1+b)^\ell J(b H-5 \eps)} 2^{((1+b)^\ell -1)J( H-5 \frac{\eps}{b})}\\
& = 2^{((1+b)^{\ell+1}-1)J(H -5 \frac{\varepsilon}{b})}.
\end{align*}
\end{proof}

\begin{theorem}\label{theo:Vinf}
Assume that $J \ge J_0$. There exists $\ell_0 \in \N$ such that 
$$
\dim_{\mathcal{H}} \left(\bigcap_{\ell\ge \ell_0} \widetilde{T_{\infty}}((1+b)^{\ell} J,b,\eps)\cap \I \right) \geq H- \eps
$$
and for all $\ell \ge \ell_0$, 
$$ 2^{(1+b)^{\ell} J(H-\varepsilon)} \le \# T_{\infty}((1+b)^{\ell}J, b, \eps) \le 2^{(1+b)^{\ell} J(H+\varepsilon)}.$$ 
\end{theorem}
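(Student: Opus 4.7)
\medskip

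\noindent\textbf{Proof plan.} Write $J_\ell := (1+b)^\ell J$ and consider the ``bad'' set
\[
B_{\ell_0} := \I \setminus \bigcap_{\ell \ge \ell_0} \widetilde{T_\infty}(J_\ell,b,\eps).
\]
The plan is to bound $\mathcal{H}^{H-\eps}(B_{\ell_0})$ from above and show this bound tends to $0$ as $\ell_0 \to \infty$. Since $\dim_{\mathcal{H}}(\I)=H>H-\eps$ gives $\mathcal{H}^{H-\eps}(\I)=+\infty$, this will force $\mathcal{H}^{H-\eps}(\I\setminus B_{\ell_0})=+\infty$, whence $\dim_{\mathcal{H}}(\I\setminus B_{\ell_0}) \ge H-\eps$; the inclusion $\I\setminus B_{\ell_0}\subset \bigcap_{\ell\ge \ell_0}\widetilde{T_\infty}(J_\ell,b,\eps)\cap\I$ then delivers the dimension inequality. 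Both cardinality bounds will fall out of the same material.

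\medskip

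\noindent\textbf{Measure estimate at a given scale.} Since $T_\infty$, $U_\infty$, $SD$ and $FD$ partition $\I_{J_\ell}$, we have
\[
\I \setminus \widetilde{T_\infty}(J_\ell,b,\eps) \subset \bigl(\I\cap\widetilde{U_\infty}(J_\ell,b,\eps)\bigr) \cup \widetilde{SD}(J_\ell,b,\eps) \cup \widetilde{FD}(J_\ell,b,\eps).
\]
The $FD$-part is controlled directly by $\#FD(J_\ell)\le 2^{J_\ell(H-2\eps)}$ intervals of length $2^{-J_\ell}$, and the $SD$-part is covered by the children $C_b SD(J_\ell)$, of cardinality $\le 2^{(1+b)J_\ell(H-3\eps)}$ and length $2^{-(1+b)J_\ell}$ (Proposition~\ref{prop:CardSNF} with $m=1$); both contributions to $\mathcal{H}^{H-\eps}$ decay like $2^{-cJ_\ell\eps}$. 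For $\widetilde{U_\infty}(J_\ell,b,\eps)\cap\I$, I would insert the bounds of Proposition~\ref{prop:CardSNF} into the three-family covering of Proposition~\ref{prop:covS}; the $p$-th term of family~(1) produces the exponent
\[
(1+b)^{p-1}J_\ell\bigl((H+\eps)+(bH-5\eps)-(1+b)(H-\eps)\bigr) = (1+b)^{p-1}J_\ell\,\eps\,(b-3),
\]
which is negative because $b<1$ and decays geometrically in $p$. Families~(2) and~(3) are handled in the same spirit, yielding
\[
\mathcal{H}^{H-\eps}\bigl(\I \setminus \widetilde{T_\infty}(J_\ell,b,\eps)\bigr) \le C\, 2^{-c J_\ell\eps}
\]
for constants $C,c>0$ depending only on $b,\eps,H$. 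Summing over $\ell\ge \ell_0$ then gives $\mathcal{H}^{H-\eps}(B_{\ell_0})\le C'\, 2^{-c(1+b)^{\ell_0}J\eps}\to 0$ as $\ell_0\to\infty$.

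\medskip

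\noindent\textbf{Cardinality bounds.} The upper bound $\#T_\infty(J_\ell,b,\eps)\le 2^{J_\ell(H+\eps)}$ is immediate from $T_\infty\subset ND$ and \eqref{Eq:CardND}. For the lower bound, $\mathcal{H}^{H-\eps}(\I\setminus B_{\ell_0})=+\infty$ gives $\mathcal{H}^{H-\eps}_\delta(\I\setminus B_{\ell_0})\to +\infty$ as $\delta\to 0$ by monotonicity in $\delta$. Since the dyadic cells of $T_\infty(J_\ell,b,\eps)$ cover $\widetilde{T_\infty}(J_\ell,b,\eps)\cap\I\supset \I\setminus B_{\ell_0}$ by intervals of length $2^{-J_\ell}$, one gets
\[
\#T_\infty(J_\ell,b,\eps)\cdot 2^{-J_\ell(H-\eps)} \;\ge\; \mathcal{H}^{H-\eps}_{2^{1-J_\ell}}(\I\setminus B_{\ell_0}),
\]
and the right-hand side exceeds $1$ for all $\ell$ sufficiently large. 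This yields $\#T_\infty(J_\ell,b,\eps)\ge 2^{J_\ell(H-\eps)}$ after possibly enlarging $\ell_0$.

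\medskip

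\noindent\textbf{Expected obstacle.} The delicate point is the exponent computation for family~(1) of Proposition~\ref{prop:covS}: the cancellation between $\#ND\le 2^{(H+\eps)(\cdot)}$, the multiplicative factor $2^{(bH-5\eps)(\cdot)}$ and the child-length factor $2^{-(H-\eps)(1+b)(\cdot)}$ must produce a strictly negative exponent, which is exactly where the hypothesis $b<1$ enters (the combination gives $(1+b)(H-s)-4\eps = (b-3)\eps<0$ at the critical value $s=H-\eps$). Once this computation is in place, the rest is routine bookkeeping and the standard fact $\mathcal{H}^s_\delta\nearrow\mathcal{H}^s$ as $\delta\to 0$.
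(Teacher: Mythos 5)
Your overall architecture follows the paper's up to the covering estimates (the decomposition of the bad set into $FD$, $C_bSD$ and the $U_\infty$-covering of Proposition~\ref{prop:covS}, and the exponent computation $(H+\eps)+(bH-5\eps)-(1+b)(H-\eps)=(b-3)\eps$ are exactly the paper's), but there is a genuine gap in the measure-theoretic backbone. Your central claim, $\mathcal{H}^{H-\eps}\bigl(\I\setminus\widetilde{T_{\infty}}((1+b)^{\ell}J,b,\eps)\bigr)\le C2^{-c(1+b)^{\ell}J\eps}$ and hence $\mathcal{H}^{H-\eps}(B_{\ell_0})\le C'2^{-c(1+b)^{\ell_0}J\eps}$, is not justified by the coverings you invoke: the cover of the bad set at scale $(1+b)^{\ell}J$ contains the intervals of $FD((1+b)^{\ell}J,b,\eps)$ themselves, which have the fixed length $2^{-(1+b)^{\ell}J}$ and are never refined. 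Such a cover only bounds the premeasure $\mathcal{H}^{H-\eps}_{\delta}$ for $\delta\gtrsim 2^{-(1+b)^{\ell}J}$, not the Hausdorff measure $\mathcal{H}^{H-\eps}=\lim_{\delta\to0}\mathcal{H}^{H-\eps}_{\delta}$, and there is no controlled way to refine the $FD$ part (inside a fast-duplicating interval, $\I$ may carry dimension up to $H$, and refining via $\I_j$ gives sums $2^{j(H+\eps)}2^{-j(H-\eps)}\to\infty$). Since $B_{\ell_0}\supset\widetilde{FD}((1+b)^{\ell_0}J,b,\eps)\cap\I$, its $(H-\eps)$-measure can perfectly well be infinite for every fixed $\ell_0$; so the finiteness of $\mathcal{H}^{H-\eps}(B_{\ell_0})$, on which both your dimension step (``forces $\mathcal{H}^{H-\eps}(\I\setminus B_{\ell_0})=+\infty$'') and your cardinality lower bound rest, cannot be obtained this way.

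The paper circumvents this by never estimating the measure of a fixed-$\ell_0$ bad set: it only controls the limsup set $\bigcap_{\ell_0}\bigcup_{\ell\ge\ell_0}\widetilde{R}((1+b)^{\ell}J,b,\eps)\cap\I$. For that set, given any $\delta>0$ one can choose $\ell_0$ so large that every element of the cover has diameter $<\delta$ while the total $(H-\eps)$-sum is $\le C2^{-(1+b)^{\ell_0}J\eps}$; hence the limsup set has $\mathcal{H}^{H-\eps}$-measure zero and dimension $\le H-\eps$. Writing $\I$ as the union of the increasing family $\bigcap_{\ell\ge\ell_0}\widetilde{T_{\infty}}((1+b)^{\ell}J,b,\eps)\cap\I$ and of this limsup set, countable stability of the Hausdorff dimension then yields the \emph{existence} of one $\ell_0$ with dimension $\ge H-\eps$, which is all the theorem asserts. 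For the cardinality lower bound you cannot argue from infinite $\mathcal{H}^{H-\eps}$-measure of the good set (dimension $\ge H-\eps$ does not provide it); the paper argues by contradiction instead: if $\#T_{\infty}((1+b)^{\ell}J,b,\eps)<2^{(1+b)^{\ell}J(H-\eps)}$ along a subsequence, these cells together with the cover of $\widetilde{R}$ give covers of all of $\I$ with mesh tending to $0$ and bounded $(H-\eps)$-sums, forcing $\dim_{\mathcal{H}}\I\le H-\eps$, a contradiction. (A minor point: the negativity of the exponent only needs $b<3$; the hypothesis $b\le1$ is used to write $(b-3)\eps\le-(1+b)\eps$ and telescope the geometric sums.) Your upper cardinality bound via $T_{\infty}\subset ND$ is correct and identical to the paper's.
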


\begin{proof}
For all $\ell \geq 1$, let us start by setting 
$$R((1+b)^{\ell}J,b,\eps) =FD((1+b)^{\ell}J,b,\eps) \cup SD((1+b)^{\ell} J,b,\eps) 
\cup  U_{\infty}  ((1+b)^{\ell}J,b,\eps) .
$$
Note that $R((1+b)^{\ell} J, b, \varepsilon)$ is exactly the set of dyadic intervals of $\I_{(1+b)^{\ell} J}$ that do not belong to the set $T_{\infty}((1+b)^{\ell} J, b, \varepsilon)$.  

Using Proposition \ref{prop:covS}, we obtain a covering of the set  $\widetilde{R}((1+b)^{\ell}J,b,\eps) \cap \I$ 
by 
  \begin{enumerate}
\item at most, for each $p \geq 1 $, 
$$\# ND((1+b)^{\ell+p-1}J,b,\eps) \times 2^{(1+b)^{\ell+p-1}J(bH-5\eps)}$$ intervals of length $2^{-(1+b)^{\ell+p}J}$, 
\item for each $p\geq 1$, intervals of $FD((1+b)^{\ell+p-1}J,b,\eps)$,
\item for each $p \geq 1$, intervals of $C_{b} SD((1+b)^{\ell+p-1}J,b,\eps)$.
\end{enumerate}
This covering provides an upper bound of the $r$-dimensional Hausdorff measure of the set $\widetilde{R}((1+b)^{\ell}J,b,\eps) \cap \I$  for $r \leq 2^{-(1+b)^{\ell}J}$ as follows. One has
\begin{align}\label{eq:mesRJ}    
&\mathcal{H}^{H-\varepsilon}_{r}\left(\widetilde{R}((1+b)^{\ell}J,b,\eps) \cap \I \right) \nonumber \\
&\le \sum_{p \ge 1}  \# ND((1+b)^{\ell+p-1}J,b,\eps) \times 2^{(1+b)^{\ell+p-1} J(bH-5\varepsilon)} 2^{-(1+b)^{\ell+p} J (H-\varepsilon) }\nonumber  \\
& \quad + \sum_{p \geq 1}  \# FD((1+b)^{\ell+p-1}J,b,\eps) \times 2^{-(1+b)^{\ell+p-1} J (H-\varepsilon) } \nonumber \\
& \quad + \sum_{p \geq 1} \# C_{b} SD((1+b)^{\ell+p-1}J,b,\eps) \times 2^{-(1+b)^{\ell+p} J (H-\varepsilon) } 
\end{align}
Let us examine the three sums separately, using Proposition \ref{prop:CardSNF} for each estimation. First, we have
\begin{align}\label{eq:Hauss1}
&  \sum_{p \ge 1}  \# ND((1+b)^{\ell+p-1}J,b,\eps) \times 2^{(1+b)^{\ell+p-1} J(bH-5\varepsilon)} 2^{-(1+b)^{\ell+p} J (H-\varepsilon) }  \nonumber \\
& \leq \sum_{p \ge 1} 2^{(1+b)^{\ell+p-1}J(H+\varepsilon) } 2^{(1+b)^{\ell+p-1} J(bH-5\varepsilon)} 2^{-(1+b)^{\ell+p} J (H-\varepsilon) }  \nonumber \\ 
&  = \sum_{p \ge 1}  2^{(1+b)^{\ell+p-1}J\varepsilon (-3+b)}  \nonumber \\ 
& \leq \sum_{p \geq 1} 2^{-(1+b)^{\ell+p}J\varepsilon } 
\end{align}
since $b-3 \leq -(1+b) $. Secondly, we have
\begin{align}\label{eq:Hauss2}
   \sum_{p \geq 1}  \# FD((1+b)^{\ell+p-1}J,b,\eps) \times 2^{-(1+b)^{\ell+p-1} J (H-\varepsilon) } 
 & \leq \sum_{p \geq 1}  2^{(1+b)^{\ell+p-1}J (H-2\eps)} 2^{-(1+b)^{\ell+p-1} J (H-\varepsilon) } \nonumber \\
 & =  \sum_{p \geq 1}  2^{-(1+b)^{\ell+p-1}J \eps} 
\end{align}
and similarly
\begin{align}\label{eq:Hauss3}
  \sum_{p \geq 1}  \# C_{b} SD((1+b)^{\ell+p-1}J,b,\eps) \times 2^{-(1+b)^{\ell+p} J (H-\varepsilon) } 
 & \leq \sum_{p \geq 1} 2^{(1+b)^{\ell+p}J (H-3 \eps) } 2^{-(1+b)^{\ell+p} J (H-\varepsilon) }\nonumber \\
 & =  \sum_{p \geq 1}  2^{-(1+b)^{\ell+p}J 4 \eps } \nonumber\\
 & \leq  \sum_{p \geq 1} 2^{-(1+b)^{\ell+p}J\varepsilon } .
\end{align}
Putting \eqref{eq:mesRJ}, \eqref{eq:Hauss1}, \eqref{eq:Hauss2} and \eqref{eq:Hauss3} together, we obtain
\begin{align}
\mathcal{H}^{H-\varepsilon}_{r}\left(\widetilde{R}((1+b)^{\ell}J,b,\eps) \cap \I \right) 
    & \leq 2 \sum_{p \geq 1} 2^{-(1+b)^{\ell+p}J\varepsilon } +  \sum_{p \geq 1}  2^{-(1+b)^{\ell+p-1}J \eps} \nonumber \\
    & \leq 3  \sum_{p \geq 1}  2^{-(1+b)^{\ell+p-1}J \eps}  \nonumber \\
    & \leq C 2^{-(1+b)^{\ell} J \eps}
\end{align}
for some constant $C>0$. 

Now,  let us fix $\ell_0 \geq 1$. The previous upper bound implies that for every $r< 2^{-(1+b)^{\ell_0} J}$, one has 
$$
\mathcal{H}^{H-\varepsilon}_{r} \left(\bigcup_{\ell \ge \ell_0}\widetilde{R}((1+b)^{\ell} J,b,\eps)\cap \I \right) 
 \le C  \sum_{\ell \ge \ell_0} 2^{-(1+b)^{\ell}J \eps}
 \le  C 2^{-(1+b)^{\ell_0} J \varepsilon}
$$
which in turn implies that 
$$
\dim_{\mathcal{H}}\left( \bigcap_{\ell_0 \ge 1} \bigcup_{\ell \ge \ell_0} \widetilde{R}((1+b)^{\ell} J,b,\eps)\cap \I \right) \le  H-\varepsilon.
$$
Recalling that $\dim_{\mathcal{H}}\I = H$ and noting that
$$
\I = \left( \bigcup_{\ell_0 \ge 1} \bigcap_{\ell \ge \ell_0} \widetilde{T_{\infty}}((1+b)^{\ell} J, b,\eps)\cap \I 
\right) \cup\left( \bigcap_{\ell_0 \ge 1} \bigcup_{\ell \ge \ell_0} \widetilde{R}((1+b)^{\ell} J ,\eps)\cap \I \right),
$$
we obtain
$$
\dim_{\mathcal{H}}\left( \bigcup_{\ell_0 \ge 1} \bigcap_{\ell \ge \ell_0} \widetilde{T_{\infty}}((1+b)^{\ell} J, b,\eps)\cap \I  \right) =  H. 
$$
In particular, there is $\ell_0 \in \N$ such that 
$$
\dim_{\mathcal{H}}\left( \bigcap_{\ell \ge \ell_0} \widetilde{T_{\infty}}((1+b)^{\ell} J, b,\eps)\cap \I  \right) \ge H-\varepsilon. 
$$
It gives the conclusion of the first part of the Theorem.

\medskip

For the second part of the Theorem, note first that  one has
$$T_{\infty}((1+b)^{\ell} J,b, \eps) \subset ND((1+b)^{\ell} J, b, \varepsilon)$$
by definition. Hence,  Proposition \ref{prop:CardSNF} directly gives $$
 \# T_{\infty}((1+b)^{\ell} J, b, \eps) \le 2^{(1+b)^{\ell} J(H+\eps)}.
$$
The lower bound is obtained by contradiction using the first part of the proof :  if there were infinitely many $q \ge q_0$ such that
$$
\# T_{\infty}((1+b)^{\ell} J, b, \eps)  < 2^{(1+b)^{\ell} J(H-\varepsilon)} ,$$
it would provide a sequence of covering of $\I$ implying that $\dim_{\mathcal{H}}(\I) \leq H-\varepsilon$.
\end{proof}

For a fixed $J \geq J_0$, if $\ell_0$ is the number given by Theorem \ref{theo:Vinf}, we set
\begin{equation}\label{eq:K_eps}
    \K_{\varepsilon}(b) = \bigcap_{\ell \geq \ell_0 }\widetilde{T_{\infty}}((1+b)^{\ell} J,b,\eps)\cap \I 
\end{equation}

It remains to prove the third point of Theorem \ref{theo:Keps}, which is a consequence of Proposition \ref{prop:bonnereproduction}. Indeed, let $\lambda \subset \I_j$ be such that $\lambda \cap \Ke(b) \neq \emptyset$ with $j=(1+b)^{\ell} J$ and $\ell \ge \ell_0$. Such a $\lambda$ belongs by definition to $T_{\infty}((1+b)^{\ell}J,b,\eps)$ and it follows by Proposition \ref{prop:bonnereproduction} that
$$
\# \{ \lambda' \subset \lambda\, : \, \lambda' \in T_{\infty}((1+b)^{\ell+\ell'}J,b,\eps) \} \ge 2^{(1+b)^{\ell'}-1)(1+b)^\ell J(H-5\frac{\eps}{b})}
$$
for any $\ell' \ge 1$.  
Such a $\lambda'$ intersects obviously the set $\K_{\eps}(b)$ since 
it contains children in $T_{\infty}((1+b)^{\ell''}J,b,\eps)$ at any scale $\ell'' \ge \ell+\ell'+1$.

\section{LWS on the compact fractal set $\I$}\label{sec:LWS}

This section applies the results obtained in the previous section to the study of the regularity of lacunary wavelet series ) supported on the compact fractal set $\I$. Specifically, we determine their increasing multifractal spectrum by establishing a Transference Mass Principle for the limsup of dyadic balls whose centers are chosen according to a Bernoulli law.

We continue to consider a compact subset $\I$ of $[0,1]$ such that $$ \dim_{\mathcal{H}} (\I)=\dim_{u-box}(\I)=H>0.$$ Fix $\alpha >0$ and $\eta \in (0,H)$. Recall that the lacunary wavelet series on $\I$ with parameters $(\alpha, \eta)$ is defined by  
$$
f = \sum_{j \in \N} \sum_{k=0}^{2^j-1} c_{j,k} \psi_{j,k } \quad \text{with}\quad
\begin{cases}
2^{-\alpha j} \xi_{j,k }& \text{ if } \lambda_{j,k} \cap \I \neq \emptyset\\
0 & \text{ otherwise,}
\end{cases}
$$
where $(\xi_{j,k})_{j,k}$ denotes a sequence of independent Bernoulli random variables with parameter $2^{(\eta-H)j}$.  
Using Lemma \ref{lem:estimI_j}, we know that the average number of nonzero coefficients at each large scale $ j $ is comparable to $ 2^{\eta j} $. In particular, one might expect that the lowest possible regularity, clearly given by $ \alpha $, is reached on an iso-Hölder set of dimension $ H $.   Moreover, it is clear that points outside $ \I $ have a Hölder exponent equal to the regularity $ r(\psi) $ of the wavelet.

Let us now show that the regularity of ``almost every'' point belonging to $ \I $ is controlled by the lacunarity parameter $ \eta $: the larger $ \eta $, the more regular the function.  For every fixed $b \in (0,1)$, we  will consider as before $J_0$ large enough such that the estimations of the cardinality of $FD(J,b,\eps)$, $ND(J,b,\eps)$ and $C_{b}SD(J,b,\eps)$ given in Proposition \ref{prop:CardSNF} hold for every $J \geq J_0$, and we will consider $k$ large enough so that  $J = (1+b)^k \geq J_0$. The corresponding quasi-Cantor set given in Equation \eqref{eq:K_eps} will then be of the form
$$
\K_{\varepsilon_n}(b) = \bigcap_{p \geq p_0} T_{\infty} \big( (1+b)^p , b_n, \varepsilon_n \big)
$$
for some $p_0>0$. 

\begin{proposition}\label{prop:BC}
Let $n \in \N_0$ and fix $\beta_n= \frac{1}{\eta}\left(H+\frac{1}{n}\right) - 1$. Consider $\ell \in \N$ such that $$(1+\beta_n) = (1+b_n)^\ell$$ for some $b_n>0$ with $b_n^2 < \frac{1}{10n \beta_n}$ and $\beta H - 5 b_n^2>0$. Finally put $\varepsilon_n = b_n^2$ and consider $\K_{\eps_n}(b_n)$ as defined in Equation \eqref{eq:K_eps}.  
Almost surely,  there exists $p_0  \in \N$ such that
$$
\sup_{\lambda' \subset \lambda}|c_{\lambda'}| \geq 2^{- \frac{\alpha}{\eta} \left( H + \frac{1}{n}\right) j }
$$
    for every $\lambda$ of scale $j =\lfloor{(1+\beta_n)^p}\rfloor$ with $p \ge p_0$ and  such that $\lambda \cap \K_{\eps_n}(b_n) \neq \emptyset$. In particular, 
    almost surely
    \begin{enumerate}
        \item $h_f(x) \leq \frac{\alpha }{\eta}(H+ \frac{1}{n} )$ for every $x \in \K_{\eps_n}(b_n)$,
        \item one has $$
    \K_{\eps_n}(b_n) \subset  \bigcap_{p \geq p_0}\bigcup_{k \in F_\eps (p+1)} B(k2^{-(1+\beta_n)^{p+1}}, 2 \cdot 2^{-(1+\beta_n)^p} )
    $$
  where 
    $$
    F_{\eps_n} (p+1) = \{k :  \lambda_{\lfloor(1+\beta_n)^{p+1}\rfloor,k} \cap \K_{\eps_n}(b_n) \neq \emptyset \text{ and } c_{\lfloor(1+\beta_n)^{p+1}\rfloor,k} \neq 0 \}.
    $$
    \end{enumerate}
\end{proposition}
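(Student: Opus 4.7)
The plan is to apply a direct Borel--Cantelli argument at the scales $j_p := \lfloor(1+\beta_n)^p\rfloor$. For each $p$, I will show that the probability that some dyadic interval $\lambda$ at scale $j_p$ with $\lambda \cap \K_{\eps_n}(b_n) \neq \emptyset$ fails to contain a descendant at scale $j_{p+1}$ whose Bernoulli variable equals $1$ is summable in $p$; the main inequality then follows from the first Borel--Cantelli lemma.

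The key geometric input is item (3) of Theorem \ref{theo:Keps} applied with $\ell'=1$: since $(1+\beta_n)=(1+b_n)^\ell$, every dyadic $\lambda$ at scale $j_p$ meeting $\K_{\eps_n}(b_n)$ admits at least
\[
N_p := 2^{\beta_n j_p (H - 5\varepsilon_n/b_n)}
\]
descendants at scale $j_{p+1}$ that themselves meet $\K_{\eps_n}(b_n)$. The associated variables $\xi_{j_{p+1},k}$ are independent Bernoulli with parameter $2^{(\eta-H)j_{p+1}}$, hence the probability that all $N_p$ of these designated descendants carry a zero coefficient is at most
\[
\bigl(1-2^{(\eta-H)j_{p+1}}\bigr)^{N_p} \le \exp\bigl(-2^{j_p\,\Delta_n}\bigr),
\]
with $\Delta_n := \beta_n(H-5\varepsilon_n/b_n)+(\eta-H)(1+\beta_n)$. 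Using the identity $(1+\beta_n)=(H+\frac{1}{n})/\eta$ one checks that $\beta_n H +(\eta-H)(1+\beta_n)=\frac{1}{n}$, so $\Delta_n = \frac{1}{n}-5\beta_n b_n$ (since $\varepsilon_n = b_n^2$). The smallness constraints imposed on $b_n$ in the hypothesis are designed to guarantee $\Delta_n > 0$.

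A union bound over all $\lambda$ at scale $j_p$ meeting $\K_{\eps_n}(b_n)$ then proceeds using item (2) of Theorem \ref{theo:Keps}, which gives at most $2^{j_p(H+\varepsilon_n)}$ such intervals; the failure probability at scale $p$ is therefore bounded by $2^{j_p(H+\varepsilon_n)}\exp(-2^{j_p\Delta_n})$. This is doubly exponentially small in $j_p$, and $j_p$ itself grows exponentially in $p$, so the sum converges. Borel--Cantelli yields almost surely some $p_0$ such that, for every $p\ge p_0$ and every such $\lambda$, some $\lambda' \subset \lambda$ at scale $j_{p+1}$ satisfies $\xi_{\lambda'}=1$. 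For this $\lambda'$,
\[
|c_{\lambda'}|=2^{-\alpha j_{p+1}}=2^{-\alpha(1+\beta_n)j_p}=2^{-\frac{\alpha}{\eta}(H+\frac{1}{n})j_p},
\]
which gives the main bound after passing to $\sup_{\lambda'\subset\lambda}$. Item (1) then follows from the wavelet-leader characterization of Theorem \ref{thm:waveletcharact}: for each $x\in \K_{\eps_n}(b_n)$ the dyadic interval $\lambda_{j_p}(x)$ meets $\K_{\eps_n}(b_n)$, so $d_{j_p}(x)\ge 2^{-\frac{\alpha}{\eta}(H+\frac{1}{n})j_p}$ for all $p\ge p_0$, and letting $p\to\infty$ produces $h_f(x)\le \frac{\alpha}{\eta}(H+\frac{1}{n})$. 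Item (2) follows because any $x \in \K_{\eps_n}(b_n)$ lies in $\lambda_{j_p}(x)$, whose distinguished descendant $\lambda_{j_{p+1},k}$ meets $\K_{\eps_n}(b_n)$ and carries $c_{j_{p+1},k}\neq 0$, so $k \in F_{\varepsilon_n}(p+1)$ and $|x - k2^{-j_{p+1}}| \le 2\cdot 2^{-j_p}$.

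The main obstacle is the arithmetic bookkeeping needed to verify that the positive leading term $\beta_n H +(\eta-H)(1+\beta_n)=\frac{1}{n}$ comfortably dominates the error term $5\beta_n\varepsilon_n/b_n$ arising from the merely approximate self-similarity of $\K_{\eps_n}(b_n)$ provided by Theorem \ref{theo:Keps}; once this is checked, the probabilistic part of the argument is a routine concentration-plus-union-bound.
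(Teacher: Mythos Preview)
Your proposal is correct and follows essentially the same approach as the paper: both arguments use the self-similarity of $\K_{\eps_n}(b_n)$ (your Theorem~\ref{theo:Keps}(3), the paper's Proposition~\ref{prop:bonnereproduction}) to guarantee many descendants at scale $j_{p+1}$, bound the probability that all their Bernoulli variables vanish, take a union bound over the $\lambda$ meeting $\K_{\eps_n}(b_n)$, verify the exponent is positive via the identity $\beta_n H+(\eta-H)(1+\beta_n)=\frac{1}{n}$, and apply Borel--Cantelli. Your derivation of items (1) and (2) from the main estimate is also the intended one.
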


\begin{proof}
Remember that the quasi-Cantor set $\K_{\varepsilon_n}(b_n)$ is obtained as
$$
\K_{\varepsilon_n}(b_n) = \bigcap_{p \geq p_0} T_{\infty} \big( (1+b_n)^p , b_n, \varepsilon_n \big)
$$
where $p_0$ is chosen large enough so that  the Hausdorff dimension of $\K_\varepsilon(b_n)$ is larger than $H-\varepsilon_n$. Using the equality $(1+\beta_n)^p = (1+b_n)^{\ell p}$, we know from Theorem \ref{theo:Vinf} that 
$$
2^{(1+\beta_n)^p (H-\varepsilon_n)} \le \# T_{\infty}((1+\beta_n)^p ,b_n, \eps_n) \le 2^{(1+\beta_n)^p (H+\varepsilon_n)}
$$ 
for all $p \geq p_0/\ell$. Furthermore, we know  from Proposition \ref{prop:bonnereproduction} that  if $\lambda \in T_{\infty} \big( (1+\beta_n)^p , b_n, \varepsilon_n \big)$
$$
\# \big\{ \lambda' \subset \lambda : \lambda' \in T_{\infty}((1+\beta_n )^{p+1},b_n,\eps_n) \big\}\ge 2^{\beta_n(1+\beta_n)^p (H-5 \frac{\eps_n}{b_n})}. 
$$
We consider 
$$
\Omega_p = \big\{ \exists \lambda \in \I_{(1+\beta_n)^p } : \lambda \cap \K_{\varepsilon_n}(b_n) \neq \emptyset \text{ and } \sup_{\lambda' \subset \lambda}|c_{\lambda'}|  < 2^{-\frac{\alpha}{\eta} (H+\frac{1}{n})(1+\beta_n)^p} \big\}.
$$
At the scale $j_0 = (1+\beta_n)^{p+1} $, one has $2^{-\alpha j_0} \geq 2^{-\frac{\alpha }{\eta}(H+\frac{1}{n})(1+\beta_n)^p }$ hence
\begin{align*}
  \mathbb{P}(\Omega_k)  
  & \leq \sum_{\lambda \in \I_{(1+\beta_n)^{p+1} } : \lambda \cap \K_{\varepsilon} (b_n)\neq \emptyset} \mathbb{P}(\forall \lambda' \subset  \lambda, \xi_{\lambda'} = 0) \\
  & \leq 2^{(1+\beta_n)^{p+1}(H+ \eps_n)} (1-2^{(\eta-H)(1+\beta_n)^{p+1} })^{2^{\beta_n(1+\beta_n)^p (H-5 \frac{\eps_n}{b_n})}}\\
  & \leq 2^{(1+\beta_n)^{p+1}(H+ \eps_n)}\exp \left(-2^{(\eta-H)(1+\beta_n)^{p+1} } 2^{\beta_n(1+\beta_n)^p (H-5 \frac{\eps_n}{b_n})}\right)\\
  & \leq 2^{(1+\beta_n)^{p+1}(H+ \eps_n)}\exp \left(-2^{\frac{1}{2n} (1+\beta_n)^{p}}\right)
\end{align*}
by using the choice of $\beta_n$, $b_n$ and $\eps_n$. An application of the Borel-Cantelli Lemma gives that almost surely, for every $p$ large enough and every $\lambda \in \I_{(1+\beta_n)^p }$ such that $\lambda \cap \K_{\varepsilon_n}(b_n) \neq \emptyset $, one has
$$
d_\lambda \geq 2^{-\frac{\alpha}{\eta} (H+\frac{1}{n})(1+\beta_n)^p}.
$$
\end{proof}

\begin{corollary}
Almost surely, for every $\eps>0$, one has
    $$
   \dim_{\mathcal{H}} \big\{ x : h_f(x) \leq \frac{\alpha H}{\eta} + \eps\big\} = H .
    $$
\end{corollary}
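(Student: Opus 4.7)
The plan is to deduce the corollary almost immediately from Proposition \ref{prop:BC} and the Hausdorff dimension estimate supplied by Theorem \ref{theo:Vinf}. The central idea is that Proposition \ref{prop:BC} produces, for each $n$, a quasi-Cantor subset $\K_{\varepsilon_n}(b_n)\subset \I$ of dimension arbitrarily close to $H$ on which the H\"older exponent is uniformly controlled by $\frac{\alpha}{\eta}(H+\frac{1}{n})$.

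First I would reduce the quantifier ``for every $\varepsilon>0$'' to a countable one. Since Proposition \ref{prop:BC} is an almost-sure statement for each fixed $n\in\mathbb{N}$, intersecting these countably many probability-one events yields a single event of probability one on which, simultaneously for every $n$, we have
$$
\K_{\varepsilon_n}(b_n)\subset \left\{x: h_f(x)\leq \tfrac{\alpha}{\eta}\bigl(H+\tfrac{1}{n}\bigr)\right\}
\quad\text{and}\quad
\dim_{\mathcal{H}}\K_{\varepsilon_n}(b_n)\geq H-\varepsilon_n,
$$
where $\varepsilon_n=b_n^2\to 0$ by the admissible choice of parameters in Proposition \ref{prop:BC}.

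On this event, fix any $\varepsilon>0$ and pick $n$ large enough that $\frac{\alpha}{\eta n}\leq \varepsilon$. Then by monotonicity of level sets,
$$
\K_{\varepsilon_n}(b_n)\subset \left\{x: h_f(x)\leq \tfrac{\alpha H}{\eta}+\varepsilon\right\},
$$
so the right-hand side has Hausdorff dimension at least $H-\varepsilon_n$; letting $n$ grow further gives the lower bound $\geq H$. The matching upper bound $\leq H$ comes from observing that for $x\notin \I$, the closedness of $\I$ together with the smoothness/decay hypothesis $r(\psi)>\frac{H}{\eta}+1$ forces $h_f(x)\geq r(\psi)$; consequently, for $\varepsilon$ below the gap $r(\psi)-\frac{\alpha H}{\eta}$ the level set is contained in $\I$, and the set is then a subset of a set of dimension $H$. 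A monotonicity argument in $\varepsilon$ shows that the conclusion holds for all admissible $\varepsilon>0$.

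I do not anticipate any genuine obstacle here: all the heavy machinery has already been assembled in Proposition \ref{prop:BC}, which packages both the Borel--Cantelli estimate on the lacunary Bernoulli coefficients and the controlled self-similarity of $\K_{\varepsilon_n}(b_n)$ at scales $(1+b_n)^\ell J$ provided by Theorem \ref{theo:Keps}. The only mild bookkeeping point is ensuring that the sequence $(\beta_n,b_n,\varepsilon_n)$ can simultaneously satisfy $b_n^2<\frac{1}{10n\beta_n}$, $\beta_n H-5b_n^2>0$ and $\varepsilon_n\to 0$, which is immediate since $b_n$ may be taken arbitrarily small independently of $n$.
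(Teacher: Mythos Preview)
Your proposal is correct and follows essentially the same route as the paper: the paper's proof simply invokes Proposition \ref{prop:BC} to get $\dim_{\mathcal H}\{x:h_f(x)\le \frac{\alpha}{\eta}(H+\frac{1}{n})\}\ge \dim_{\mathcal H}\K_{\varepsilon_n}(b_n)\ge H-\varepsilon_n$ and lets $n\to\infty$. You are in fact more careful than the paper, which does not spell out the upper bound $\le H$ at all; your justification via $h_f(x)\ge r(\psi)$ for $x\notin\I$ is the intended one, though note that it tacitly requires $r(\psi)>\frac{\alpha H}{\eta}$, a point the paper also glosses over.
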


    \begin{proof}
    For every $n \in \N$, we know by Proposition \ref{prop:BC} that almost surely
        $$
       \dim_{\mathcal{H}} \big\{ x : h_f(x) \leq \frac{\alpha }{\eta}(H+\frac{1}{n}) \big\} \geq  \dim_{\mathcal{H}}(\K_{\varepsilon_n}(b_n)) \geq H- \eps_n  
        $$
        with $ \eps_n <\frac{1}{10(nH+1)}$. It suffices then to take $n \to + \infty$. 
    \end{proof}

\begin{theorem}\label{theo:LWSlowbound}
Almost surely, for every $h \in [\alpha, \frac{\alpha H}{\eta}]$, one has
    $$
       \dim_{\mathcal{H}} \big\{ x : h_f(x) \leq h \}\ge  h \frac{\eta }{\alpha} .
    $$
\end{theorem}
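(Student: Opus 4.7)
Fix $h \in [\alpha, \alpha H/\eta]$ and set $\delta = \alpha/h \in [\eta/H, 1]$. By Proposition~\ref{prop:Edelta} (absorbing the slowly decaying $\varepsilon_j$), it will suffice to show that almost surely $\dim_{\mathcal{H}} E_\delta(f) \geq \eta/\delta$. The endpoint $h = \alpha H/\eta$ (i.e., $\delta = \eta/H$) is already obtained from the Corollary immediately preceding this theorem by letting $\varepsilon \to 0$, so the plan is to handle the case $\delta > \eta/H$, where the balls defining $E_\delta(f)$ are strictly smaller than the covering balls produced in Proposition~\ref{prop:BC}.

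My strategy will be to prove a Mass--Transference-type statement adapted to the quasi-Cantor $\K = \K_{\varepsilon_n}(b_n)$ provided by Proposition~\ref{prop:BC}. Fix small $\rho > 0$. With the same parameters $\beta_n = (H+1/n)/\eta - 1$ and $\tilde j_p = (1+\beta_n)^p J$, Proposition~\ref{prop:BC} supplies $\K$ of Hausdorff dimension at least $H - \varepsilon_n$, together with the covering $\K \subset \bigcap_{p \geq p_0} \bigcup_{k \in F_{\varepsilon_n}(p+1)} B(k 2^{-\tilde j_{p+1}}, 2 \cdot 2^{-\tilde j_p})$. I want to show that when these balls are contracted to radius $2^{-\delta \tilde j_{p+1}}$ — which is what is needed to fit inside $E_\delta(f)$ — the intersection of the contracted limsup with $\K$ still carries a subset of Hausdorff dimension at least $(H-\varepsilon_n)\delta_n/\delta - \rho$ with $\delta_n = 1/(1+\beta_n)$; the latter tends to $\eta/\delta$ as $n\to\infty$ and $\varepsilon_n \to 0$, which is the target. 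The contraction exponent is $\tau = \delta(1+\beta_n)$, so the predicted value $H/\tau$ matches the classical MTP heuristic.

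Since $\K$ is not Ahlfors regular, classical MTP does not apply, and I plan instead to mimic its proof by an inductive construction in the style of Section~\ref{sec:Keps}. Concretely, I would construct a nested random Cantor $\mathcal{C} \subset \K \cap E_\delta(f)$ whose level-$p$ nodes are contracted active balls $B(k 2^{-\tilde j_{p+1}}, 2^{-\delta \tilde j_{p+1}})$. For the branching step, I count the number of $T_\infty$-intervals at a deeper scale $\tilde j_{p+q}$ whose Bernoulli coefficient equals $1$ and whose own contracted ball lies inside the given level-$p$ node; by the approximate $H$-regularity of the quasi-Cantor at those scales (a refinement of the estimates in Theorem~\ref{theo:Vinf}), combined with the selection probability $2^{(\eta-H)\tilde j_{p+q}}$, the expected count is of order $2^{\eta \tilde j_{p+q} - \delta H \tilde j_{p+1}}$. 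A Chernoff concentration together with a union bound over the (still exponentially many) nodes of $\mathcal{C}$ at scale $p$, combined with Borel--Cantelli, will ensure that this lower bound is attained uniformly at every node almost surely. Tuning the branching factor just below this expected count then yields $\dim \mathcal{C} \geq (H-\varepsilon_n)/(\delta(1+\beta_n)) - \rho$; letting $\rho, \varepsilon_n \to 0$ and $n \to \infty$ along a countable sequence and using monotonicity of $\delta \mapsto \dim E_\delta$ completes the proof for every $h$ in the stated range.

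The main technical obstacle will be the local counting of $T_\infty$-intervals at scale $\tilde j_{p+q}$ inside an arbitrary ball of radius $2^{-\delta \tilde j_{p+1}}$, whose scale does not coincide with any of the natural quasi-Cantor scales $\tilde j_r$. This forces a strengthening of the arguments of Section~\ref{sec:Keps} to obtain lower bounds on the number of $T_\infty$-descendants uniformly over non-dyadic balls centred in $\K$, essentially promoting the global self-similarity of Proposition~\ref{prop:bonnereproduction} into a local version. Once this local Ahlfors-type bound is in place, the double Borel--Cantelli step (over the Cantor nodes and over the Bernoulli coefficients) proceeds along the exact same pattern as the Chernoff computation in the proof of Proposition~\ref{prop:BC}, with $(1+\beta_n)$ replaced throughout by the appropriate scale ratios.
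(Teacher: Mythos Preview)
Your overall plan --- reduce to $E_\delta(f)$ via Proposition~\ref{prop:Edelta}, work inside the quasi-Cantor $\K_{\varepsilon_n}(b_n)$, and build a nested Cantor set of contracted active balls with a measure satisfying the Mass Distribution Principle --- matches the paper's. However, the paper's execution is simpler than yours in two linked ways, and the simplification eliminates precisely the step you flag as the ``main technical obstacle''.

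First, the paper uses \emph{no} second probabilistic argument. The only Borel--Cantelli is the one already performed in Proposition~\ref{prop:BC}, which shows that almost surely every $T_\infty$-interval $\lambda$ at scale $(1+\beta_n)^p$ contains a child $\lambda'$ at scale $(1+\beta_n)^{p+1}$ with $\xi_{\lambda'}=1$ (this is what the covering $\K\subset\bigcup_{k\in F_{\varepsilon_n}(p+1)}B(k2^{-(1+\beta_n)^{p+1}},2\cdot 2^{-(1+\beta_n)^p})$ says). Once this almost-sure event is fixed, the Cantor construction is entirely \emph{deterministic}: to branch inside a node, one counts deterministically many $T_\infty$-children of $\K$ at the next level (via Proposition~\ref{prop:bonnereproduction}), and each such child automatically contributes an active ball by the covering. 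Your proposed Chernoff/union-bound step, counting Bernoulli-active descendants inside each node, is unnecessary.

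Second, and this is what makes the first simplification work, the paper does not contract to the ``true'' radius $2^{-\delta\tilde j_{p+1}}$. Instead it writes $t=\delta(1+\beta_n)$ and approximates $t$ by $s=(1+b_n)^{q_0}$ with $q_0\in\{0,\dots,\ell\}$, i.e.\ by a ratio of two quasi-Cantor scales. The contracted ball $B(x^N_r,2^{-s(1+\beta_n)^{p_N}})$ is then itself a $T_\infty$-interval at the quasi-Cantor scale $(1+b_n)^{q_0}(1+\beta_n)^{p_N}$, so Proposition~\ref{prop:bonnereproduction} gives, directly and with no further work, a lower bound on the number of its $T_\infty$-descendants at scale $(1+\beta_n)^{p_{N+1}}$. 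Your ``local Ahlfors-type bound inside balls whose radius is not a quasi-Cantor scale'' is thus bypassed entirely: the trick is to quantise the contraction factor, not to prove counting at arbitrary radii. The resulting loss is a factor $(1+b_n)$ in the dimension estimate, which vanishes as $n\to\infty$; the paper then passes to arbitrary $h$ by monotonicity and a countable dense set, exactly as you suggest.

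In short: your route would probably work but demands a genuine extension of Section~\ref{sec:Keps}; the paper's route sidesteps that extension by aligning the contraction with the quasi-Cantor scales, after which the construction is deterministic and the measure estimate is a direct bookkeeping exercise with Proposition~\ref{prop:bonnereproduction}.
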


\begin{proof}
Using Lemma \ref{prop:Edelta}, the result boils down to establishing a lower bound for the Hausdorff dimension of the sets
$$
E_{\delta}(f)= \limsup_{j \to + \infty} \bigcup_{k, \, \xi_{j,k} \neq 0 } B(k2^{-j}, 2^{-\delta (1-\eps_j)j}).
$$
We fix again $n \in \N$, $\beta_n = \frac{1}{\eta}(H+\frac{1}{n})-1$. We consider as previously $\ell \in \N$ such that $$(1+\beta_n) = (1+b_n)^\ell$$ for some $b_n>0$ with $b_n^2 < \frac{1}{10n \beta_n}$ and $\beta H - 5 b_n^2>0$. Finally put $\varepsilon_n = b_n^2$. We consider $\Ken(b_n)$ as defined in Equation \eqref{eq:K_eps}.
Using Proposition \ref{prop:BC}, we can fix $p_0$  large enough such that 
$$
\Ken(b_n) \subset \bigcap_{p\geq p_0}\bigcup_{k \in F_{\epsn}(p+1)} B(k2^{-(1+\beta_n)^{p+1}}, 2\cdot 2^{-(1+\beta_n)^p}).
$$
For $s$ such that  $1\leq s\leq 1+\beta_n$ that can be written as  $s=(1+b_n)^{q_0}$ with $q_0 \in \{0, \dots, \ell\}$, 
we claim and prove below that almost surely, one has
\begin{equation}\label{eq:dimlimsup1}
\dim_{\mathcal{H}} \left(\Ken(b_n) \cap  \bigcap_{p\geq p_0}\bigcup_{k \in F_{\epsn}(p+1)} B(k2^{-(1+\beta_n)^{p+1}},2^{-s(1+\beta_n)^p}) \right) \ge \frac{H-7b_n}{s(1+b_n)}. 
\end{equation}
An immediate consequence is that, for any $\delta \in [\frac{1}{1+\beta_n} , 1]$, if one sets $t=(1+\beta_n){\delta}$ and if $q_0 \in \{0, \dots, \ell\}$ is chosen so that $(1+b_n)^{q_0} \leq t <(1+b_n)^{q_0+1}$, one can write almost surely
\begin{eqnarray*}
\dim_{H}E_{\delta}(f) & \ge & \dim_{\mathcal{H}} \left(\Ken(b_n) \cap  \bigcap_{p\geq p_0}\bigcup_{k \in F_{\epsn}(p+1)} B(k2^{-(1+\beta_n)^{p+1}},2^{-t(1+\beta_n)^p}) \right) \\
& \ge & \dim_{\mathcal{H}} \left(\Ken (b_n) \cap  \bigcap_{p\geq p_0}\bigcup_{k \in F_{\epsn}(p+1)} B(k2^{-(1+\beta_n)^{p+1}},2^{-s(1+\beta_n)^p}) \right)\\
& \ge & 
\frac{H-7b_n}{s(1+b_n)}\\
& \ge & \frac{H-7b_n}{t(1+b_n)^2}\\
& \ge & \frac{H-7b_n}{\delta(1+\beta_n)(1+b_n)^2}\\
& = & \eta \frac{H-7b_n}{\delta(H + \frac{1}{n})(1+b_n)^2}
\end{eqnarray*}
where  $s=(1+b_n)^{q_0}$.
Taking  a dense set $\{h_k : k \in \N \}$ of $[\alpha, \frac{\alpha}{\eta}H]$ and the limit as $n \to +\infty$,  Lemma \ref{prop:Edelta} implies that, almost surely, for any $k \in \N$, 
$$\dim_{\mathcal{H}} \{x \, : \, h_f(x) \le  h_k \} \ge \dim_{\mathcal{H}} E_{\frac{\alpha}{h_k}} (f) \ge  \eta \frac{h_k}{\alpha}.
$$
Since $\dim_{\mathcal{H}} \{x \, : \, h_f(x) \le  h \}$ is an increasing function, it comes that, almost surely, 
$$
\mathcal{D}_{f, \le} (h) = \dim_{\mathcal{H}} \{x \, : \, h_f(x) \le  h \}  \ge  \eta \frac{h}{\alpha}, \quad \forall h \in [\alpha, \frac{\alpha}{\eta}H]
$$
and it will yield the stated result.

To obtain the lower bound of the dimension stated in Equation \eqref{eq:dimlimsup1}, as is often done (see, for instance, \cite{Jaffard:00,BucSeu:10}), we will construct a Cantor set inside
\[
\Ken (b_n)\cap  \bigcap_{p\geq p_0}\bigcup_{k \in F_{\epsn}(p+1)}  B(k2^{-(1+\beta_n)^{p+1}},2\cdot 2^{-s(1+\beta_n)^p}) 
\]
and a measure on the Cantor set in order to apply the Mass distribution principle recalled below.

\begin{lemma}[Mass distribution principle \cite{Falconer:86}]\label{lem:MDP}
Let $F$ be a Borel subset of $\R$ and $\mu$ be a mass distribution on $F$. Suppose that for some $s$ there exists $c>0$ and $\delta>0$ such that
$$
\mu(U) \le c \vert U \vert^{t}
$$
for all sets $U$ with $\vert U \vert \le \delta$. Then $\mathcal{H}^s(F) \ge \mu(F)/c$ and in particular, 
$$
t \le \dim_{\mathcal{H}} F.
$$
\end{lemma}

The Cantor set and the measure are constructed by an iterative process that we describe below. We fix $s$ such that $1\le s \le 1+\beta_n$ and $s=(1+b_n)^{q_0}$ with $ q_0\in \{0, \dots \ell\}$.\\

\noindent {\bf First generation :} Fix $p_1 \geq p_0$. Since 
$$
\Ken(b_n) \subset \bigcup_{k \in F_{\varepsilon_n}(p_1+1)} B(k2^{-(1+\beta_n)^{p_1+1}}, 2\cdot 2^{-(1+\beta_n)^{p_1}})
$$
there are at least $2^{(1+\beta_n)^{p_1}(H - \eps_n)}$ elements in $F_{\varepsilon_n}(p_1+1)$ since
$$
\# T_{\infty} ((1+\beta_n)^{p_1}, b_n, \eps_n) \geq 2^{(1+\beta_n)^{p_1} (H- \eps_n)}
$$
by Theorem \ref{theo:Vinf}. We select exactly $2^{(1+\beta_n)^{p_1}(H- 6 \frac{\varepsilon_n}{b_n})} = 2^{(1+\beta_n)^{p_1}(H- 6b_n)}$ elements corresponding to   dyadic points $x^1_r =k_r2^{-(1+\beta_n)^{p_1+1}}$, with $1 \le r \le 2^{(1+\beta_n)^{p_1}(H-6b_n)}$ and so that the balls 
$$
 B(x^1_r, 2^{-s(1+\beta_n)^{p_1}}), \quad 1 \le r \le 2^{(1+\beta_n)^{p_1}(H-6b_n)},
$$

are disjoints two by two. Note that this choice is possible if $p_1$ is large enough. We set
$$
B^1_r =  B(x^1_r, 2^{-(1+b_n)^{q_0}(1+\beta_n)^{p_1}}), \quad 1 \le r \le 2^{(1+\beta_n)^{p_1}(H-6b_n)}.
$$
These balls give the first generation of our Cantor set and the  mass of $\mu$ is uniformly distributed on the balls, so
$$
\mu (B^1_r)= 2^{- (H-6b_n)(1+\beta_n)^{p_1}}= \vert B^1_r \vert^{(H-6b_n)/(1+b_n)^{q_0}}=\vert B^1_r \vert^{(H-6b_n)/s}
$$
for all $1 \le r \le 2^{(1+\beta_n)^{p_1}(H-6b_n)}$.
Note that by construction, 
\begin{equation}\label{eq:step1}
B^1_r \in T_{\infty} ((1+b_n)^{q_0}(1+\beta_n)^{p_1}, b_n, \varepsilon_n).
\end{equation}\\

\noindent
{\bf Choice of the scales :} The next generations of the Cantor set will be constructed at   scales $(1+b_n)^{q_0}(1+\beta_n)^{p_N}$, with $N\ge 1$, where the sequence $(p_N)_N$ of integers is chosen such that for all $N \ge 2$
\begin{equation}\label{seqP_n}
2^{(1+\beta_n)^{p_{N-1}} ((1+b_n)^{q_0}-1)(H - 6{b_n})} < 2^{(1+\beta_n)^{p_N} {b_n}2^{-N}}.
\end{equation}\\

\noindent {\bf Second generation :} Let us pick one of the ball  $B^1_{r_1}$ of the first generation. Using \eqref{eq:step1} and the self-similarity given in Proposition \ref{prop:bonnereproduction} (with $\eps_n=b_n^2$),  we know that, for all the scales of the form $(1+b_n)^p$  between $(1+b_n)^{q_0+1}(1+\beta_n)^{p_1}$ and $(1+\beta_n)^{p_2}$, the ball $B^1_{r_1}$ contains at least 
$$2^{((1+b_n)^{p} - (1+b_n)^{q_0}(1+\beta_n)^{p_1}) (H - 5 {b_n})}$$
dyadic intervals of $K_{\eps_n}(b_n)$. We select exactly $2^{((1+b_n)^{p} - (1+b_n)^{q_0}(1+\beta_n)^{p_1}) (H - 6 {b_n})}$ such intervals. At the last scale $p=p_2$ each of these intervals corresponds to a ball $B(x^2_r, 2^{-(1+\beta_n)^{p_2}})$ where $x^2_r = k2^{(1+\beta_n)^{p_2+1}}$ belongs to $F_{\eps_n}(p_2+1)$, since 
$$
\Ken(b_n) \subset \bigcup_{k \in F_{\varepsilon_n}(p_2+1)} B(k2^{-(1+\beta_n)^{p_2+1}}, 2\cdot 2^{-(1+\beta_n)^{p_2}}).
$$
Our second generation of balls in $B^1_{r_1}$ is then given by $2^{((1+\beta _n)^{p_2} - (1+b_n)^{q_0}(1+\beta_n)^{p_1}) (H - 6 {b_n})}$ disjoint balls  $B(k2^{-(1+\beta_n)^{p_2+1}}, 2^{-(1+b_n)^{q_0}(1+\beta_n)^{p_2}})$ included in $B^1_{r_1}$. 

\begin{remark} Note that for each dyadic interval of $\K_{\eps}(b_n)$ of scale $(1+\beta_n)^{p_2}$,  we select only one ball $B(x^2_r, 2^{-(1+\beta_n)^{p_2}})$ with $x^2_r = k2^{(1+\beta_n)^{p_2+1}} \in F_{\eps_n}(p_2+1)$. Therefore, the balls with the same center but different dilated radii can intersect at most pairwise. The selection of exactly $2^{((1+\beta _n)^{p_2} - (1+b_n)^{q_0}(1+\beta_n)^{p_1}) (H - 6 {b_n})}$ ensures the possibility of choosing a collection of disjoint balls.
\end{remark}

We label these balls $$B^2_{r_1,r_2}  \quad \text{ with } \quad 1 \le r_2 \le 2^{((1+\beta _n)^{p_2} - (1+b_n)^{q_0}(1+\beta_n)^{p_1}) (H - 6 {b_n})}.$$
Next, we uniformly distribute the mass of $B^1_{r_1}$ on the disjoint smaller balls, so that 
\begin{align*}
\mu(B^2_{r_1,r_2})&= \frac{\mu(B^1_{r_1})}{2^{((1+\beta _n)^{p_2} - (1+b_n)^{q_0}(1+\beta_n)^{p_1}) (H - 6 b_n)}} \\
& = \frac{2^{(1+\beta_n)^{p_1}(H-6b_n)}}{2^{((1+\beta _n)^{p_2} - (1+b_n)^{q_0}(1+\beta_n)^{p_1}) (H - 6 b_n)}}\\
& \le 2^{-(1+\beta_n)^{p_2}(H-6{b_n}- \frac{b_n}{4})} \\
& = \vert B^2_{r_1,r_2}\vert^{\frac{H-6b_n- \frac{b_n}{4}}{(1+b_n)^{q_0}}}\\
& = \vert B^2_{r_1,r_2}\vert^{\frac{H-6b_n- \frac{b_n}{4}}{s}},
\end{align*}
since $p_2$ is chosen large enough so that 
$$
2^{(1+\beta_n)^{p_1} ((1+b_n)^{q_0}-1)(H - 6b_n)} < 2^{(1+\beta_n)^{p_2} \frac{b_n}{4}}.
$$\\

\noindent {\bf $N^\text{th}$ generation : }  We assume that the balls $B^{N-1}_{r_1, \dots, r_{N-1}}$ and the measure $\mu$ have been constructed so that
$$
\mu (B^{N-1}_{r_1, \dots, r_{N-1}}) \leq 2^{-(1+\beta_n)^{p_{N-1}}(H-{b_n}(6+\sum_{m = 2}^{N-1} 2^{-m} ))}  = \vert B^{N-1}_{r_1, \dots, r_{N-1}}\vert^{\frac{H-{b_n} (6+\sum_{m = 2}^{N-1} 2^{-m} )}{(1+b_n)^{q_0}}}.
$$ Let us consider one of this balls. It contains at least
$$2^{((1+\beta _n)^{p_N} - (1+b_n)^{q_0}(1+\beta_n)^{p_{N-1}})(H - 5 b_n)}$$
dyadic intervals of $K_{\eps_n}(b_n)$ at scale $(1+\beta_n)^{p_N}$. Again, each of these intervals corresponds  to a ball $B(x^N_r, 2^{-(1+\beta_n)^{p_N}})$ where $x^N_r = k2^{(1+\beta_n)^{p_N+1}}$ belongs to $F_{\eps_n}(p_N+1)$.
Our $N^\text{th}$ generation of balls in $B^N_{r_1, \dots, r_N}$ is then given by $2^{((1+\beta _n)^{p_N} - (1+b_n)^{q_0}(1+\beta_n)^{p_{N-1}}) (H - 6 {b_n})}$  disjoint balls  $B(k2^{-(1+\beta_n)^{p_N+1}}, 2^{-(1+b_n)^{q_0}(1+\beta_n)^{p_N}})$ included in $B^{N-1}_{r_1, \dots, r_{N-1}}$. Again, this choice can be done such that at all intermediary scales $(1+b_n)^p$, exactly $2^{((1+b_n)^p - (1+b_n)^{q_0}(1+\beta_n)^{p_{N-1}})(H - 6 b_n)}$ dyadic intervals of $\K_{\eps_n}(b_n)$ are selected. 
We label this balls $$B^N_{r_1,\dots, r_N}  \quad \text{ with } \quad 1 \le r_N \le 2^{((1+\beta _n)^{p_N} - (1+b_n)^{q_0}(1+\beta_n)^{p_{N-1}}) (H - 6 b_n)}.$$
Next, we uniformly distribute the mass of $B^{N-1}_{r_1, \dots, r_{N-1}}$ on the disjoint smaller balls. It follows that 
\begin{eqnarray*}
\mu (B^N_{r_1,...,r_N})& =&  \frac{\mu(B^{N-1}_{r_1, \dots, r_{N-1}})}{2^{((1+\beta _n)^{p_N} - (1+b_n)^{q_0}(1+\beta_n)^{p_{N-1}}) (H - 6 {b_n})}}\\
&= & \frac{2^{-(1+\beta_n)^{p_{N-1}}(H-b_n(6+\sum_{m = 2}^{N-1} 2^{-m} ))}}{2^{((1+\beta _n)^{p_N} - (1+b_n)^{q_0}(1+\beta_n)^{p_{N-1}}) (H - 6 {b_n})}}\\
&\leq & 2^{-(1+\beta_n)^{p_{N}}(H-b_n(6+ \sum_{m = 2}^{N} 2^{-m} ))}  \\
& =  & \vert B^{N}_{r_1, \dots, r_{N}}\vert^{\frac{H-b_n(6+ \sum_{m = 2}^{N} 2^{-m}) }{(1+b_n)^{q_0}}} \\
& =  & \vert B^{N}_{r_1, \dots, r_{N}}\vert^{\frac{H-b_n(6+ \sum_{m = 2}^{N} 2^{-m}) }{s}}.
\end{eqnarray*}
because of \eqref{seqP_n}.\\

\noindent {\bf Defintion of the Cantor set:}  We define the Cantor set $\Zen (b_n) \subset \Ken (b_n)$ by
$$
\Zen (b_n) = \bigcap_N \bigcup_{(r_1,...,r_N)} B^{N}_{r_1,...,r_N} 
$$
The support of $\mu$ obtained by the distribution process above is included in $\Zen (b_n)$ (see \cite{Falconer:86}, Prop. 1.7) and for any ball $B$ appearing in the construction of the Cantor set, we have
\begin{equation}\label{eq:mesballcantor}
\mu(B) \le \vert B \vert^{\frac{H-7b_n}{s}}. 
\end{equation}

\medskip

\noindent {\bf Measure of a ball of  $\Ken (b_n)$:} Assume first that  $D$ correspond to a dyadic interval that appears in the construction of  $\Ken (b_n)$ that intersects $\Zen (b_n)$. Let  $N$ be such
that
$$
2^{-(1+\beta_n)^{p_{N+1} } } < |D| \leq 2^{-(1+\beta_n)^{p_N } }.
$$
We consider $m\in \{0, \dots, q\}$ and $p \in \{ p_N, \dots, p_{N+1}-1\}$ such that
$$
|D|  = 2^{-(1+\beta_n)^{p }(1+b_n)^m  }
$$
We consider two cases:
\begin{itemize}
    \item First, assume that 
    $$
   2^{-(1+\beta_n)^{p_N }(1+b_n)^{q_0}  } \leq |D| \leq 2^{-(1+\beta_n)^{p_N }  },
    $$
i.e. $p=p_N$ and $m \leq q_0$. Then, there exists at most one ball $B^{N}_{r_1, \dots, r_N}$ of the $N^{\text{th}}$ generation of $\Zen(b_n)$ such that $B^{N}_{r_1, \dots, r_N}\cap D \neq \emptyset$. Hence
$$
\mu(D) \leq \mu (B^{N}_{r_1, \dots, r_N}) \leq |B|^{\frac{H-7{b_n}}{(1+b_n)^{q_0}}} \leq |B|^{\frac{H-7{b_n}}{s} }\leq |D|^{\frac{H-7{b_n}}{s}}. 
$$

\item Next, assume that
$$
 |D|=   2^{-(1+\beta_n)^{p_N }(1+b_n)^{q_0+m} } 
$$
with $m \geq 1$ and $qp_N + q_0+ m < p_{N+1}$. 
Since $D \in T_{\infty} ((1+\beta_n)^{p_N }(1+b_n)^{m} , b_n, \eps_n)$, 
we know  that there are at most 
$$
2^{((1+\beta_n)^{p_{N+1} } - (1+\beta)^{p_N}(1+b_n)^{q_0+m}) (H - 6 {b_n})}
$$
children of $D$ at scale $(1+\beta_n)^{p_{N+1} }$ in $D$. Since each of these intervals contains at most one subinterval of the $(N+1)^{\text{th}}$ generation of the Cantor set, it follows that
\begin{eqnarray*}
\mu(D) & \leq & 2^{((1+\beta_n)^{p_{N+1} } - (1+\beta)^{p_N}(1+b_n)^{q_0+m}) (H -6{b_n})}   \frac{\mu(B^{N})}{2^{((1+\beta _n)^{p_{N+1}} - (1+b_n)^{q_0}(1+\beta_n)^{p_{N}}) (H - 6 {b_n})}}\\
&\leq &  2^{- (1+\beta_n)^{p_N}(1+b_n)^{q_0+m}(H -6 {b_n})}   \frac{\mu(B^{N})}{2^{ - (1+b_n)^{q_0}(1+\beta_n)^{p_{N}} (H - 6 {b_n})}}\\
&\leq &  2^{-(1+b_n)^{q_0}((1+b_n)^{m}-1)(1+\beta_n)^{p_{N}}(H -6 {b_n})}   \mu(B^{N})\\
&\leq & 2^{-((1+b_n)^{m}-1)(1+\beta_n)^{p_{N}}(H -6 {b_n})}  2^{-(1+\beta_n)^{p_{N}}(H-7{b_n} )}\\
&\leq & 2^{- (1+b_n)^{m} (1+\beta_n)^{p_N} (H - 7 {b_n})}\\
& \leq & |D|^{\frac{(H - 7 {b_n})}{(1+b_n)^{q_0}}}\\
& = & |D|^{\frac{(H - 7 {b_n})}{s}}.
\end{eqnarray*}

\end{itemize}

\medskip

\noindent {\bf {Measure of any ball $D$} of radius smaller than $2^{-(1+\beta)^{p_1}}$:} Clearly, if 
$D \cap \Zen = \emptyset$, then $\mu(D)=0$. Hence, we  can assume that $D \cap \Zen \neq \emptyset$. If $\vert D \vert = 2^{-\tau}$, we consider $N \ge 1$, $m\in \{0, \dots, q\}$ and $p \in \{ p_N, \dots, p_{N+1}-1\}$ such that
$$
(1+\beta_n)^{p }(1+b_n)^{m+1}   <\tau < (1+\beta_n)^{p }(1+b_n)^{m}.
$$ 
Such a ball intersects at most two balls $B_1,B_2$ of $\Ken(b_n)$ of size $2^{-(1+\beta_n)^{p }(1+b_n)^{m}}$. It follows that
\begin{eqnarray*}
\mu(D) &\le & \mu(B_1)+\mu(B_2) \\
& \le & 2\cdot 2^{-(1+\beta_n)^{p }(1+b_n)^{m} \frac{H-7b_n}{s}}\\
& \le & 2\cdot 2^{-\tau \frac{H-7b_n}{s(1+b_n)}}\\
& = & 2 \cdot \vert D \vert^{\frac{H-7b_n}{s(1+b_n)}}.
\end{eqnarray*}

\medskip

\noindent {\bf{Dimension of $\Zen$:}} We conclude that $\dim_{\mathcal{H}}(\Zen)\ge \frac{H-7b_n}{s(1+b_n)}$ by applying the Mass Ditribution Principle recalled in Lemma \ref{lem:MDP}. his establishes \eqref{eq:dimlimsup1}, and thus completes the proof.
 \end{proof}

\begin{proposition}\label{prop:dimisoholder}
    Almost surely, for every $h \in [\alpha, \frac{\alpha H}{\eta}]$, one has
    $$
       \dim_{\mathcal{H}} \big\{ x : h_f(x) \leq h \} \le h \frac{\eta }{\alpha} .
    $$
\end{proposition}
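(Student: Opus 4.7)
The plan is to deduce the upper bound from a dimension estimate on the limsup sets $E_\delta(f)$ via Proposition \ref{prop:Edelta}. Its second item, contraposed, says that $h_f(x) < \alpha/\delta$ forces $x \in E_\delta(f)$. Hence for every $h' > h$,
$$
\{x \in \R : h_f(x) \leq h\} \subset E_{\alpha/h'}(f),
$$
so it suffices to prove that, almost surely, $\dim_{\mathcal{H}} E_\delta(f) \leq \eta/\delta$ for every $\delta \in (\eta/H, 1]$ and then let $h' \downarrow h$ along a countable dense sequence.

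First I would control the number of non-zero coefficients at each scale. Write $F_j = \{k \in \mathcal{I}_j : \xi_{j,k} = 1\}$. By Lemma \ref{lem:estimI_j}, for arbitrary $\varepsilon > 0$ one has $\# \mathcal{I}_j \leq 2^{j(H+\varepsilon)}$ once $j$ is large, so
$$
\mathbb{E}[\#F_j] \leq 2^{j(H+\varepsilon)} \cdot 2^{(\eta - H)j} = 2^{j(\eta + \varepsilon)}.
$$
Markov's inequality then yields $\mathbb{P}(\#F_j \geq 2^{j(\eta + 2\varepsilon)}) \leq 2^{-j\varepsilon}$, which is summable in $j$, and Borel--Cantelli gives that almost surely $\#F_j \leq 2^{j(\eta + 2\varepsilon)}$ for every $j$ sufficiently large.

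The dimension bound is then a direct covering calculation. On the full-probability event above, cover $E_\delta(f)$ by the balls $B(k 2^{-j}, 2^{-\delta(1-\varepsilon_j)j})$ for $k \in F_j$ and $j \geq J$. For any $s > (\eta + 2\varepsilon)/\delta$, using $\varepsilon_j \to 0$,
$$
\mathcal{H}^s_{2^{-\delta J/2}}(E_\delta(f)) \leq \sum_{j \geq J} 2^{j(\eta + 2\varepsilon)} \cdot 2^{-s\delta(1-\varepsilon_j)j} \xrightarrow[J \to \infty]{} 0,
$$
so that $\dim_{\mathcal{H}} E_\delta(f) \leq (\eta + 2\varepsilon)/\delta$; letting $\varepsilon \to 0$ along a sequence gives $\dim_{\mathcal{H}} E_\delta(f) \leq \eta/\delta$. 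Finally, applying this bound simultaneously to a countable dense sequence of parameters $\delta$ in $(\eta/H,1]$ (the intersection of countably many full-probability events is still full-probability) and specializing to $\delta = \alpha/h'$ with $h' \downarrow h$ yields the announced inequality. There is no serious obstacle: the main delicate point is the uniform handling of all $h$ at once, which is dealt with by the countable dense sequence argument, and no concentration beyond Markov's inequality is required.
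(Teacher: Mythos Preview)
Your argument is correct. The probabilistic part --- bounding $\mathbb{E}[\#F_j]$ via Lemma~\ref{lem:estimI_j}, then Markov plus Borel--Cantelli to get $\#F_j \leq 2^{j(\eta+2\varepsilon)}$ almost surely --- is exactly what the paper does. The two proofs diverge in how the deterministic upper bound is then extracted: the paper packages the conclusion through the large-deviation quantity $\rho_f(\alpha)=\limsup_j \frac{\log\#F_j}{j\log 2}\leq \eta$ and invokes the general inequality $\mathcal{D}_f(h)\leq h\sup_{h'\leq h}\rho_f(h')/h'$ from \cite{Jaffard:02}, whereas you go through Proposition~\ref{prop:Edelta} and a direct covering of $E_\delta(f)$. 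Your route is more self-contained (no external black box) and makes the covering explicit; the paper's route is shorter by citation. One minor remark: once the Borel--Cantelli event is fixed (for a sequence $\varepsilon_n\to 0$), the bound $\dim_{\mathcal H}E_\delta(f)\leq \eta/\delta$ holds deterministically for \emph{every} $\delta$, so the final ``countable dense sequence of $\delta$'' is not actually needed --- the only countable union of null events occurs at the $\varepsilon$-step.
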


\begin{proof}
Let us fix $h \in [\alpha, \frac{\alpha H}{\eta}]$.
Fix $\eps>0$  and remark that for every $j$ large enough, one has 
$$
\mathbb{E} [\# \{k : c_{j,k} =2^{-\alpha j} \}] \leq  2^{(\eta-H)j }  2^{j(H+\varepsilon)} = 2^{(\eta + \varepsilon)j}
$$
by applying Lemma \ref{lem:estimI_j}. Chebychev inequality combined with Borel-Cantelli lemma gives directly that almost surely, for $j$ large enough, 
$$
\# \{k : c_{j,k} =2^{-\alpha j} \} \leq 2^{(\eta + 2 \varepsilon)j}.
$$
Considering a sequence $(\eps_n)_n$ that decreases to $0$, we obtain that
$$
\rho_f(\alpha) := \limsup_{j \to + \infty} \frac{\log \# \{k : c_{j,k} =2^{-\alpha j} \} }{\log 2^j} \leq \eta. 
$$
almost surely. It has been proved in \cite{Jaffard:02} that
$$
{\mathcal{D}}_f(h) \leq h \sup_{h' \in ( 0,h]}  \frac{\rho_f(h')}{h'}
$$
The right term is an increasing function of $h$, hence it is also an upper-bound of ${\mathcal{D}}_{f, \le}(h)$. 

Since $\rho_f(h') = - \infty$ if $h' \neq \alpha$, we obtain that
$$
 \dim_{\mathcal{H}} \big\{ x : h_f(x) \leq h \} \leq h \frac{\eta}{\alpha }.
$$
\end{proof}

The combination of Theorem \ref{theo:LWSlowbound} and Proposition \ref{prop:dimisoholder} finally yields Theorem \ref{thm:spectreLWS}.

\section{Conclusion and comments}

Let us conclude by presenting an example that demonstrates the optimality of our results given the tools developed in the paper. We consider a sequence $(K_n)_{n \ge 1}$ of disjoint Cantor sets of Hausdorff  dimension $1-\frac{1}{n}$, and let us set
$$
\I =  \bigcup_{n \ge 1} K_n.
$$
Clearly, $\I$ satisfies  the upper-box counting formalism with
$$
\dim_{\mathcal{H}}(\I) = \dim_{u-box}(\I) = 1. 
$$ 
Now, consider the lacunary wavelet series $f$ on $\I$. Since the Cantor sets are disjoint, one has
$$
\{x : h_f(x) = h \} = \bigcup_{n \in \N} \{x \in K_n: h_{f_n}(x) =h \}
$$
where $$
f_n = \sum_{j \in \N} \sum_{k=0}^{2^j-1} c_{j,k} \psi_{j,k } \quad \text{with}\quad
\begin{cases}
2^{-\alpha j} \xi_{j,k }& \text{ if } \lambda_{j,k} \cap K_n \neq \emptyset\\
0 & \text{ otherwise,}
\end{cases}
$$
is the corresponding lacunary wavelet series on $K_n$. Note that $\xi_{j,k}$  is a Bernoulli random variable with parameter $2^{(\eta-1)j}= 2 ^{(\eta-\frac{1}{n} - \dim_\mathcal{H}(K_n)) j}$.  Hence, the parameter of lacunarity of $f_n$ is given by $\eta-\frac{1}{n}$.
From \cite{EV23}, we know that if $\eta-\frac{1}{n}>0$, one has almost surely
$$
\dim_{\mathcal{H}}\{x \in K_n: h_{f_n}(x) =h \} = h \frac{\eta-\frac{1}{n}}{\alpha}
$$
for all $h \in [\alpha, \frac{\alpha (1- \frac{1}{n})}{\eta-\frac{1}{n}}]$, and the iso-H\"older set is empty for all other finite values of $h$. Consequently,
$$
\dim_{\mathcal{H}} \{x : h_f(x) = h \} = \sup_{n \ge 1}  h \frac{\eta-\frac{1}{n}}{\alpha} = h \frac{\eta}{\alpha}
$$
for all $h \in [\alpha, \frac{\alpha H}{\eta}]$. Let us however make two remarks.
\begin{itemize}
    \item First, note that 
    $$
    \mathcal{H}^{h \frac{\eta}{\alpha}}(\{x : h_f(x) = h \}) = \sum_{n \in \N}  \mathcal{H}^{h \frac{\eta}{\alpha}}(\{x : h_{f_n}(x) = h \}) = 0.
    $$
    Hence, in general, one cannot expect that the $h\frac{\eta}{\alpha}$-dimensional Hausdorff measure of the iso-H\"older set of level $h$ of a LWS $f$ is strictly positive. 

    \item One cannot hope to obtain precise results about the regularity that exceed the regularity attained ``almost everywhere'' on $\I$. Indeed, remark that for all $n$, one has
    $$
    \frac{\alpha (1- \frac{1}{n})}{\eta-\frac{1}{n}} > \frac{\alpha }{\eta}
    $$
    so that the maximal regularity on each $K_n$ is larger than the regularity $\frac{\alpha }{\eta}$ attained by $f$ on a set of dimension $1$. 
\end{itemize}

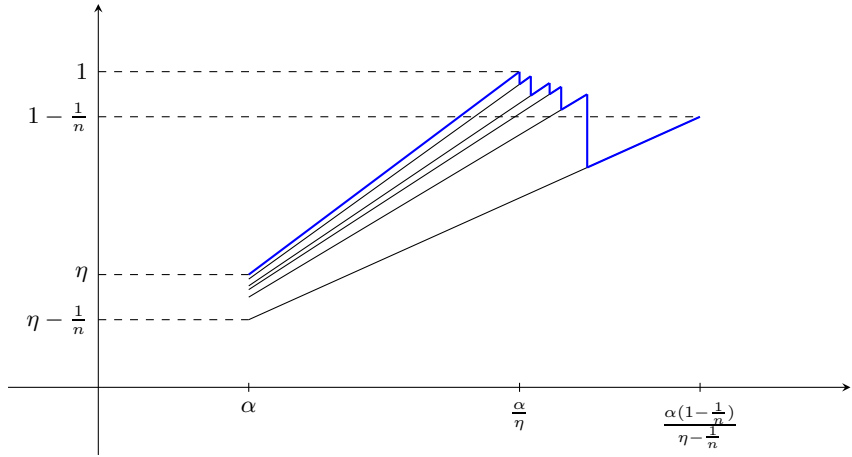
\begin{figure}[ht]
\centering
\begin{tikzpicture}
\begin{axis}[
x=4cm,y=6cm,
axis lines=middle,
xmin=-0.3,
xmax=2.5,
ymin=-0.15,
ymax=0.85,
xtick=\empty,
ytick = \empty,
]
\draw[thick, color=blue, samples=100,domain=0.5:1.4] 
 {plot ({\x},{\x/2})};
\draw  (0.5,-0.01) -- (0.5,0.01);
\draw  (0.5,-0.01) [below]node {\footnotesize$\alpha$};
\draw  (1.4,-0.01) -- (1.4,0.01);
\draw  (1.4,-0.01) [below]node {\footnotesize$\frac{\alpha }{\eta}$};
\draw[dashed] (0,0.7) -- (1.4,0.7);
\draw  (0,0.7) [left]node {\footnotesize$1$};
\draw[dashed] (0,0.25) -- (0.5,0.25);
\draw  (0,0.25) [left]node {\footnotesize$\eta$};
\draw[color=black, samples=100,domain=0.5:2] 
 {plot ({\x},{\x*0.3})};
 \draw[dashed] (0,0.6) -- (2,0.6);
\draw  (0,0.6) [left]node {\footnotesize$ 1 - \frac{1}{n}$};
\draw  (2,-0.01) -- (2,0.01);
\draw  (2,-0.01) [below]node {\footnotesize$\frac{\alpha (1-\frac{1}{n})}{\eta - \frac{1}{n}}$};
\draw[dashed] (0,0.15) -- (0.5,0.15);
\draw  (0,0.15) [left]node {\footnotesize$\eta- \frac{1}{n}$};
\draw[color=black, samples=100,domain=0.5:13/8] 
 {plot ({\x},{\x*2/5})};
 \draw[color=black, samples=100,domain=0.5:20/13] 
 {plot ({\x},{\x*13/30})};
  \draw[color=black, samples=100,domain=0.5:3/2] 
 {plot ({\x},{\x*9/20})};
   \draw[color=black, samples=100,domain=0.5:69/48] 
 {plot ({\x},{\x*24/50})};
 \draw[thick, color=blue]   (1.4,1.4*24/50) -- (1.4,0.7);
    \draw[thick, color=blue, samples=100,domain=1.4:69/48] 
 {plot ({\x},{\x*24/50})};
  \draw[thick, color=blue]   (69/48,69/48*9/20) -- (69/48,69/48*24/50);
    \draw[thick, color=blue, samples=100,domain=69/48:3/2] 
 {plot ({\x},{\x*9/20})};
   \draw[thick, color=blue]   (3/2,3/2*9/20) -- (3/2,3/2*13/30);
    \draw[thick, color=blue, samples=100,domain=3/2:20/13] 
 {plot ({\x},{\x*13/30})};
   \draw[thick, color=blue]   (20/13,20/13*13/30) -- (20/13,20/13*2/5);
    \draw[thick, color=blue, samples=100,domain=20/13:13/8] 
 {plot ({\x},{\x*2/5})};
    \draw[thick, color=blue]   (13/8,13/8*2/5) -- (13/8,13/8*0.3);
    \draw[thick, color=blue, samples=100,domain=13/8:2] 
 {plot ({\x},{\x*0.3})};
\end{axis}

\end{tikzpicture}
\caption{The regularity of a LWS on $\I$ can be larger than the regularity attained on a subset of the dimension of $\I$. }
\end{figure}
\bigskip

The strategy of constructing quasi-Cantor sets with self-similarity properties at prescribed scales inside the fractal $\I$ allow us to recover some Mass Transference Principles for limsup of balls whose centers are chosen according to Bernoulli laws. The counterpart is that we do not obtain the preservation of the positivity of a  measure as mentioned  above, but only an estimation of the dimension. The usual arguments to obtain the multifractal spectrum, and not just the increasing one, are crucially based on the existence of a positive measure on the sets $E_{\delta}(f)$. Indeed, for a  LWS $f$ of parameter $(\alpha, \eta)$, and every  $\delta \in (0,1)$, we have
\[
\{ x \in \I : \, h_f(x)= \frac{\alpha}{\delta} \}= \I \cap\left( \bigcap_{0<\delta' < \delta} E_{\delta'}(f)
\setminus \bigcup_{\delta < \delta' \leq 1} E_{ \delta'}(f)\right). 
\]
The example presented above tends also to show that very various behaviors can be expected for the regularities greater than the first regularity $h_0$ for which the dimension of the fractal set is reached. We do not know whether the multifractal spectrum of a LWS supported on a fractal set $\I$ might be discontinuous even for regularities smaller than $h_0$. It would be interesting to exhibit LWS on fractal sets $\I$ for which $\dim_{\mathcal{H}}\I$ is not reached - maybe by considering LWS on the attractor of an IFS which do not satisfy the OSC. The difficulty to obtain the multifractal spectrum in situations where the increasing spectrum is known also occurs in the context of  multivariate multifractal analysis. This is the case for the bivariate analysis of two independent LWS on $[0,1]$ (see \cite{JaffSeu:19}). Note also that a relaxed version of the singularity spectrum which is more accurate than the increasing one has been defined the following way:
$$
\widetilde{D}_f(h)= \lim_{\eps \to 0^+} \dim_{\mathcal{H}}\{ x  : \, h-\eps \le h_f(x) \le h+\eps\}
$$
which is easier to obtain since it is enough to compute the exact dimensions of the sets $E_{\delta}(f)$ and for which the validity of the large deviation methods are more robust.

We also expect that our strategy of first constructing quasi-Cantor sets before estimating the dimension of the limsup of balls could be developed in several directions. In particular, in future works, we plan to
\begin{itemize} 
    \item[$\bullet$] Consider other types of random processes on fractal sets, such as random wavelet series or lacunarized cascades;
\item[$\bullet$] Investigate the optimal conditions on the fractal set $\I$ under which we can construct quasi-Cantor sets inside. In particular, we plan to explore the generalization of this method to iso-H\"older sets, for which the upper-box counting dimension is $1$ as soon as there is some homogeneity. This is the case for random cascades or LWS. Such developments would be particularly useful for developing criteria to test the validity of the formalism (see \cite{EV23}) and for multivariate analysis of functions.
\end{itemize}

\noindent{\bf Acknowledgments.} We warmly thank Edouard Daviaud for his enriching discussions, which greatly contributed  to bringing additional insights  to this paper.

\bibliographystyle{plain}
\bibliography{CantorSet.bib}

\newpage 
\appendix

\section{Proofs of the results of Section \ref{sec:goodrate} }
In this appendix,  for pedagogical purposes, we recall the proof of the two results of \cite{EV23} recalled in Section \ref{sec:goodrate}  on which the construction presented in Section \ref{sec:Keps} is based.

\medskip
\noindent\textbf{Proof of Lemma \ref{lem:estimI_j}}
Because the Hausdorff dimension and the upper-box dimension of $\I$ are equal, we have $\log_2 \# \I_j \le H+ \varepsilon$, {\it i.e.} $\#I_j \le 2^{j(H+\varepsilon)}$ for $j$ large enough. 

The lower bound is proved by contradiction. Assume that there is a sequence $(j_n)_n$ such that $\# \I_{j_n} \le 2^{j_n(H-\varepsilon)}$. Since $\I \subset  \widetilde{\I_{j_n}}$, it provides a covering of $\I$ by less than $2^{j_n(H-\varepsilon)}$ intervals of size $2^{-j_n}$. It follows that  $\text{dim}_H(\I) \le H- \varepsilon$, which gives the contradiction.
\null\hfill $\Box$\par\medskip

\medskip

\noindent\textbf{Proof of Proposition \ref{prop:CardSNF}}
Since $\I_j = ND(j,\beta,\eps) \cup SD(j,\beta,\eps) \cup FD(j,\beta,\eps)$, there exists $J  \in \N$, such that for all $j_n \ge J$
$$
 \# \left( SD(j_n,\beta,\eps) \cup FD(j_n,\beta,\eps) \right) \le 2^{j_n(H+\varepsilon)}.
$$
Since $\# \I_{\lfloor (1+ \beta) j \rfloor}$ is bounded by $2^{(1+\beta)(H+\varepsilon)j}$, we obtain that
$$
\# FD(j,\beta,\eps) \times 2^{j(\beta H+4 m \epsilon)} \le \# \I_{\lfloor (1+ \beta) j \rfloor} \le 2^{j(1+\beta)(H+\varepsilon)}.
$$
With $m=\max(1,\beta)$, it gives \eqref{Eq:CardFD}.

Since $\# SD(j,\beta,\eps)$ is bounded by $2^{j(H+\varepsilon)}$ and since the dyadic intervals of $SD(j,\beta,\eps)$ present a slow duplication, we can control the cardinal of $C_{\beta}SD(j,\beta,\eps)$ by
\begin{equation*}\label{eq:CSD}
    \# C_{\beta}SD(j,\beta,\eps) \le 2^{j(H+\varepsilon)} 2^{j (H\beta - 4 m \varepsilon) } \le 2^{j((1+\beta) H-3 m \varepsilon )}
\end{equation*}
which is \eqref{Eq:CardSD}.

We now turn to the set $ND(j,\beta,\eps)$. Its upper bound is trivial. Suppose now that there is a sequence $(j_n)_n$ such that, for all $n$,  
\begin{equation}\label{eq:ND}
    \# ND(j_n,\beta,\eps) \le 2^{j_n(H- 2\varepsilon)}. 
\end{equation}

For each $n \ge 0$, we have the following covering of $\I$ with sets of diameter smaller than $2^{-j_n}$ :
$$
\I \subset \{ \lambda \in ND(j_n,\beta,\eps)\} \cup \{ \lambda \in  FD(j_n,\beta,\eps)\}  \cup \{ \lambda' \in  C_{\beta}SD(j_n,\beta,\eps)\}.
$$
Together with \eqref{Eq:CardND}, \eqref{Eq:CardFD}  and \eqref{Eq:CardSD}, it implies that, for any $r>0$ and $0 <s<1$, 
$$
{\mathcal{H}}^s_r(\I) \le 2 \times 2^{j_n(H-2\varepsilon)} 2^{-j_n s}+ 2^{j_n((1+\beta) H-3 m \varepsilon)} 2^{-(1+\beta)j_n s}
$$
for any $j_n$ such that $2^{-j_n} \le r$. Taking $s = H - \varepsilon$, it comes
$$
{\mathcal{H}}^s_r(\I) \le 2 \times 2^{-\varepsilon j_n} + 2^{-j_n m \varepsilon}
$$
It follows that $\lim_{r \to 0^+} {\mathcal{H}}^s_r(\I) =0$ and therefore $\dim_{\cal I}(\I) \le s- \varepsilon$, which is impossible. Hence, there exists some $J \in \N$ such that for all $j \ge J$, $2^{j(H-2\varepsilon)} \le \# ND(j,\beta,\eps) \le 2^{j(H+\varepsilon)}$.
\null\hfill $\Box$\par\medskip

\end{document}